\tikzset{>=latex,auto}
\newtheorem{theorem}{Theorem}[section]
\newtheorem{lemma}[theorem]{Lemma}
\newtheorem{corollary}[theorem]{Corollary}
\theoremstyle{definition}
\newtheorem{definition}[theorem]{Definition}
\newtheorem{example}[theorem]{Example}
\theoremstyle{remark}
\newtheorem{remark}[theorem]{Remark}
\newtheorem{question}[theorem]{Question}
\numberwithin{equation}{section}
\newcommand{\thmref}[1]{Theorem~\ref{#1}}
\newcommand{\secref}[1]{\S\ref{#1}}
\newcommand{\lemref}[1]{Lemma~\ref{#1}}
\newcommand{\defref}[1]{Definition~\ref{#1}}
\newcommand{\corref}[1]{Corollary~\ref{#1}}
\newcommand{\figref}[1]{Fig.~\ref{#1}}
\newcommand{\exref}[1]{Example~\ref{#1}}
\newcommand{\vs}{\vspace{6pt}}
\newcommand{\CC}{{\mathbb C}}
\newcommand{\RR}{{\mathbb R}}
\newcommand{\TT}{{\mathbb T}}
\newcommand{\ZZ}{{\mathbb Z}}
\newcommand{\OO}{{\mathcal O}}
\newcommand{\sS}{{\mathcal S}}
\newcommand{\sR}{{\mathcal R}}
\newcommand{\sC}{{\mathcal C}}
\newcommand{\sG}{{\mathcal G}}
\newcommand{\id}{\operatorname{id}}
\newcommand{\md}{\mathbf{m}_d}
\newcommand{\mk}{\mathbf{m}_k}
\newcommand{\des}{\operatorname{des}}
\newcommand{\sig}{\mathbf{sig}}
\newcommand{\dep}{\mathbf{dep}}
\newcommand{\fix}{\mathbf{fix}}
\newcommand{\sym}{\operatorname{sym}}
\newcommand{\ds}{\displaystyle}
\newcommand{\sm}{\smallsetminus}
\newcommand{\bv}{\boldsymbol{v}}
\newcommand{\bn}{\boldsymbol{n}}
\newcommand{\bl}{\boldsymbol{\ell}}
\newcommand{\bx}{\boldsymbol{x}}
\newcommand{\by}{\boldsymbol{y}}
\newcommand{\bp}{\boldsymbol{p}}
\newcommand{\bw}{\boldsymbol{w}}
\newcommand{\Mbrot}{\mathcal M}
\newcommand{\bit}{\it \bfseries}
\begin{document}

\title[Combinatorial types of periodic orbits]{On combinatorial types of periodic orbits of the map $\boldsymbol{x \mapsto kx}$ (\MakeLowercase{mod} $\boldsymbol{\ZZ}$)} 

\author[C. L. Petersen and S. Zakeri]{Carsten L. Petersen and Saeed Zakeri}

\address{Department of Mathematics, Roskilde University, DK-4000 Roskilde, Denmark} 

\email{lunde@ruc.dk}

\address{Department of Mathematics, Queens College of CUNY, 65-30 Kissena Blvd., Queens, New York 11367, USA} 

\address{Department of Mathematics, The Graduate Center of CUNY, 365 Fifth Ave., New York, NY 10016, USA}

\email{saeed.zakeri@qc.cuny.edu}

\date{August 6, 2019}

\begin{abstract}
We study the combinatorial types of periodic orbits of the standard covering endomorphisms $\mk(x)=k x \ (\text{mod} \ \ZZ)$ of the circle for integers $k \geq 2$ and the frequency with which they occur. For any $q$-cycle $\sigma$ in the permutation group $\sS_q$, we give a full description of the set of period $q$ orbits of $\mk$ that realize $\sigma$ and in particular count how many such orbits there are. The description is based on an invariant called the ``fixed point distribution'' vector and is achieved by reducing the realization problem to finding the stationary state of an associated Markov chain. Our results generalize earlier work on the special case where $\sigma$ is a rotation cycle, and can be viewed as a missing combinatorial ingredient for a proper understanding of the dynamics of complex polynomial maps of degree $\geq 3$ and the structure of their parameter spaces.
\end{abstract}

\maketitle

\tableofcontents

\section{Introduction}\label{intro}

Let $k$ be an integer $\geq 2$ and $\mk: \RR/\ZZ \to \RR/\ZZ$ be the multiplication-by-$k$ map of the circle defined by $\mk(x)=k x \ (\text{mod} \ \ZZ)$. This paper studies the combinatorial types of the periodic orbits of $\mk$ and the frequency with which they occur. Here two periodic orbits $\OO, \OO'$ of $\mk$ have the same combinatorial type if there is an orientation-preserving homeomorphism $\phi: \RR/\ZZ \to \RR/\ZZ$ which maps $\OO$ to $\OO'$ and satisfies $\phi \circ \mk = \mk \circ \phi$ on $\OO$. \vs 

We can formulate the problem more formally as follows. Let $\sS_q$ denote the group of permutations of $q$ objects, identified with bijections of $\ZZ/q\ZZ$, and let $\sC_q \subset \sS_q$ be the collection of $q$-cycles, namely permutations that act transitively on $\ZZ/q\ZZ$. We define the {\it combinatorial type} of $\sigma \in \sC_q$ as its conjugacy class $[\sigma]$ under the action of the rotation group $\sR_q$ generated by $\rho: i \mapsto i+1 \ (\text{mod} \ q)$. A period $q$ orbit $\OO= \{ x_1, \ldots, x_q \}$ of a continuous map $f: \RR/\ZZ \to \RR/\ZZ$, labeled so that $0 \leq x_1<\cdots<x_q<1$, is said to {\it realize} $\sigma \in \sC_q$ if $f(x_i)=x_{\sigma(i)}$ for all $i$. We say that $\OO$ realizes the combinatorial type $[\sigma]$ if it realizes $\rho^j \sigma \rho^{-j} \in \sC_q$ for some $j$. \vs 

Whether a given $\sigma \in \sC_q$ has a chance to be realized by a period $q$ orbit of $\mk$ depends on a measure of complexity of $\sigma$ known as its {\it descent number} $\des(\sigma)$ (see \defref{desn}). This is an integer between $1$ and $q-1$. In \cite{Mc}, McMullen gave a topological interpretation for $\des(\sigma)$ as the minimum degree of a covering map of the circle with a period $q$ orbit that realizes $\sigma$. This gives the necessary upper bound $\des(\sigma) \leq k$ for any $\sigma$ realized by $\mk$. The present paper will prove that this condition is also sufficient; in fact, we develop a machinery to describe the set of all period $q$ orbits of $\mk$ that realize a given $\sigma$. This, in particular, allows us to count precisely how many of the $O(k^q)$ period $q$ orbits of $\mk$ realize $\sigma$, and how many realize the combinatorial type $[\sigma]$. The main idea is to reduce the realization problem to finding the stationary state of a regular Markov chain. \vs          

The case $\des(\sigma)=1$ is of special interest in complex dynamics. It is easy to see that $\des(\sigma)=1$ if and only if $\sigma=\rho^p$ for some $p$ relatively prime to $q$. The periodic orbits of $\mk$ that realize such $\sigma$ are {\it rotation sets} with combinatorial rotation number $p/q$. This case was originally studied in Goldberg \cite{G} and further utilized by Goldberg and Milnor \cite{GM} in their work on fixed point portraits of complex polynomial maps. See also \cite{Mc} and \cite{Z1} for alternative approaches to the realization problem in this case. The question of which cycles are realizable under the doubling map $\mathbf{m}_2$ is also addressed in \cite{BS}. \vs 

Here is a brief outline of the paper and main results. Sections \S \ref{CTC}-\S \ref{realcyc} provide the preliminary material: \S \ref{CTC} gives the algebraic background on $q$-cycles in the permutation group $\sS_q$ and their combinatorial types, as well as the descent number of a permutation and its basic properties. \S \ref{sec:CMFP} is a quick review of the well-known properties of covering endomorphisms of the circle and their fixed points. \S \ref{realcyc} discusses realizations of a given $\sigma \in \sC_q$ under general covering maps of the circle and shows that the combinatorial invariant $\des(\sigma)$ is the minimal topological degree of such covering maps. \vs

In \S \ref{TMC} we assign a {\it transition matrix} $A$ to each $\sigma \in \sC_q$; this is a $q \times q$ binary matrix that encodes the dynamics of the partition intervals $I_i = [x_i, x_{i+1}]$ under any minimal realization of $\sigma$. The main diagonal entries of $A$ define the {\it signature} $\sig(\sigma)=(a_1, \ldots, a_q)$, where $a_i=1$ or $0$ according as $I_i$ maps over itself or not. The number of the entries of $1$ in $\sig(\sigma)$ is $d-1$, where $d=\des(\sigma)$. The rescaled matrix $(1/d)A$ is stochastic and regular when $d>1$, so by classical Perron-Frobenius theory it has a unique probability eigenvector corresponding to the leading eigenvalue $\lambda=1$. \vs 

In \S \ref{sec:min} we use the existence of this eigenvector to prove that a given $\sigma \in \sC_q$ with $\des(\sigma)=d>1$ and $\sig(\sigma)=(a_1, \ldots, a_q)$ is realized by a unique orbit $\OO$ of $\md$ if and only if $a_q=1$ (\thmref{main1}). In this case the rotated copies $\OO-j/(d-1) \ (\text{mod} \ \ZZ)$ will form all realizations of the combinatorial type $[\sigma]$ under $\md$. The number of such realizations is $(d-1)/s$, where $s=\sym(\sigma)$ is the {\it symmetry order} of $\sigma$, i.e., the order of the stabilizer group of $\sigma$ in $\sR_q$ (Theorems \ref{main2} and \ref{rotcop}). \vs        

\S \ref{GEN} generalizes the realization theorem to all higher degrees. Given $\sigma \in \sC_q$ with $\des(\sigma)=d \geq 1$ and an integer $k>d$, we reduce the realization problem of $\sigma$ under $\mk$ to an eigenvector problem for an associated stochastic matrix $(1/k)(A+P)$, where $A$ is the transition matrix of $\sigma$ and $P$ is a $q \times q$ binary matrix characterized by the choice of how many fixed points of $\mk$ would fall between any consecutive pair of points in the realizing orbit. The structure of $A$ (more precisely, the position of $1$'s in the signature $\sig(\sigma)$) prescribes the relative location of $d-1$ of the total $k-1$ fixed points of $\mk$; the number of remaining ``free'' choices is $k-d$ if $a_q=1$ and $k-d-1$ if $a_q=0$. We prove that all admissible choices of $P$ will lead to realizing orbits under $\mk$ (\thmref{main3}). This shows that the number of realizations of $\sigma$ under $\mk$ is the binomial coefficient ${q+k-d \choose q}$ if $a_q=1$ and is ${q+k-d-1 \choose q}$ if $a_q=0$ (\thmref{main4}). The combinatorial type $[\sigma]$ will then have $\frac{k-1}{s} \ {q+k-d-1 \choose q-1}$ realizations under $\mk$, where again $s$ is the symmetry order of $\sigma$ (\thmref{main5}). In the special case $d=1$, our theorem recovers the count ${q+k-2 \choose q}$ in \cite{G} for the number of distinct rotation sets of a given rotation number $p/q$ under $\mk$. \vs 

The above realizations, corresponding to admissible choices of $P$, are parametrized by their {\it fixed point distribution} $\fix(\OO)=(n_1, \ldots, n_q)$, where $n_i$ is the number of fixed points of $\mk$ in the partition interval $I_i$ defined by the orbit $\OO$. Alternatively, they can be parametrized by a dual invariant called the {\it (cumulative) deployment vector} $\dep(\OO)=(w_1,\ldots, w_{k-1})$, where $w_i$ is the number of elements of $\OO$ that belong to $(0,i/(k-1))$ (see \cite{G} and compare \cite{Z1}). If $\sig(\sigma)=(a_1, \ldots, a_q)$ and if $i_1<\cdots<i_{d-1}$ are the indices for which $a_i=1$, then $\dep(\OO)=(w_1, \ldots, w_{k-1})$ satisfies the following conditions: \vs
\begin{enumerate}
\item[(i)]
$0 \leq w_1 \leq \cdots \leq w_{k-1}=q$, \vs
\item[(ii)]
$i_1, \ldots, i_{d-1}$ appear among the components $w_1, \ldots, w_{k-1}$, \vs
\end{enumerate} 
where (ii) holds vacuously if $d=1$. Any integer vector $\bw = (w_1, \ldots, w_{k-1})$ which satisfies (i) and (ii) is called {\it $\sigma$-admissible}. Our Theorems \ref{main1} and \ref{main3} can then be unified and restated as follows: \vs 

\noindent
{\bf The Realization Theorem.} \ {\it Let $\sigma \in \sC_q$ with $\des(\sigma)=d$, and take an integer $k \geq \max \{ d, 2 \}$. Then, for every $\sigma$-admissible vector $\bw \in \ZZ^{k-1}$ there is a unique realization $\OO$ of $\sigma$ under $\mk$ with $\dep(\OO)=\bw$.} \vs \vs

Similarly, our Theorems \ref{main1}, \ref{main2}, \ref{main4} and \ref{main5} can be combined into the following unified statement which covers all the cases involved: \vs 

\noindent
{\bf The Counting Theorem.} \ {\it Let $\sigma \in \sC_q$ with $\des(\sigma)=d$, $\sym(\sigma)=s$ and $\sig(\sigma)=(a_1,\ldots, a_q)$. Then, for every integer $k \geq \max \{ d, 2 \}$ the number of realizations of $\sigma$ under $\mk$ is
$$
{q+k-d+a_q-1 \choose q},
$$
while the number of realizations of the combinatorial type $[\sigma]$ under $\mk$ is
$$
\frac{k-1}{s} \, {q+k-d-1 \choose q-1}. \vs
$$
}

The abstract problem studied here is motivated by its connection to complex dynamics. The link is provided by the fact that for each $k \geq 2$ the periodic points of $\mk$ appear as the angles of the dynamic rays that land on the repelling or parabolic periodic points of complex polynomial maps of degree $k$. Two periodic points of $\mk$ with distjoint orbits are {\it compatible} if there is a polynomial whose corresponding dynamic rays {\it co-land} at the same periodic point. The co-landing patterns of periodic dynamic rays lead to a partition of the space of all polynomials of degree $k$ into strata that are more amenable to investigation. This idea has proved to be instrumental in understanding the dynamics of individual polynomials and the structure of their parameter spaces. In the work of Thurston \cite{T} the angles of co-landing periodic rays give rise to the {\it lamination} associated with a polynomial. A similar theme is central to the work of Douady and Hubbard \cite{DH} on the quadratic family $\{ Q_c: z \mapsto z^2+c \}_{c \in \CC}$. They show that every parabolic parameter $c_0 \neq 1/4$ in the Mandelbrot set $\Mbrot$ is the landing point of precisely two parameter rays with periodic angles $\theta, \theta'$ of the same period $q \geq 2$ under $\mathbf{m}_2$. The open ``wake'' $W(\theta,\theta')$ cut out by the union of these rays and their common landing point $c_0$ can be characterized as the set of all $c$ for which the dynamic rays at angles $\theta, \theta'$ co-land at the ``$\beta$-fixed point'' of the renormalization (i.e., a ~quadratic-like restriction of the iterate $Q_c^{\circ q}$ around the critical value $c$) and the connectedness locus of the family of these renormalizations is a homeomorphic copy of $\Mbrot$ contained in $W(\theta,\theta')\cup\{c_0\}$. This effectively labels all renormalization copies of $\Mbrot$ inside $\Mbrot$. The stability of co-landing of the dynamic rays with angles $\theta, \theta'$ throughout $W(\theta,\theta')$ was subsequently used by Yoccoz to define a Markov partition of the filled Julia set of $Q_c$ known as the {\it Yoccoz puzzle} which ultimately led to his ground-breaking work on local connectivity of $\Mbrot$ at all finitely renormalizable parameters. \vs

A full combinatorial description of the connectedness loci in higher degrees has been partially attempted but needs deeper exploration. A common idea is to look at the patterns in degree $k+1$ that are inherited from degree $k$. For example, by comparing the internal and external angles in the dynamical plane of suitable cubic polynomials, one can find a natural way to associate to every period $q$ point of $\mathbf{m}_2$ a pair of ``neighboring'' compatible orbits of $\mathbf{m}_3$ that realize the same cycle in $\sC_q$. More precisely, for each period $q$ orbit $\{ z_1, \ldots, z_q \}$ of $\mathbf{m}_2$ with $0<z_1<\cdots<z_q<1$ and each $1 \leq j \leq q$ there are ``neighboring'' orbits $\{ x_1, \ldots, x_q \}, \{ y_1, \ldots, y_q \}$ of $\mathbf{m}_3$ such that \vs     
\begin{enumerate}
\item[$\bullet$]
They are interlaced:
$$
0<x_1<y_1< \cdots < x_q< y_q < 1;
$$

\item[$\bullet$]
$y_j-x_j = \dfrac{3^{q-1}}{3^q-1} > \dfrac{1}{3}$, so $\mathbf{m}_3$ maps $[x_j,y_j]$ over the whole circle;
 
\item[$\bullet$]
There is a monotone projection $\Pi:\TT\to\TT$ with $\Pi(0) = 0$ and  $\Pi([x_j,y_j]) = z_j$ which semiconjugates $\mathbf{m}_3$ to $\mathbf{m}_2$ on $\TT \sm \;]x_j,y_j[$. Thus both orbits $\{ x_i \}, \{ y_i \}$ realize the same cycle in $\sC_q$ as $\{ z_i \}$ does under $\mathbf{m}_2$. \vs 
\end{enumerate}

%%%%%%%%%%%%%%%%%%%%%%%%%%%%%%%%%%%%%%%%%%%
\begin{figure}[t]
\captionsetup{justification=justified}
\begin{overpic}[width=\textwidth]{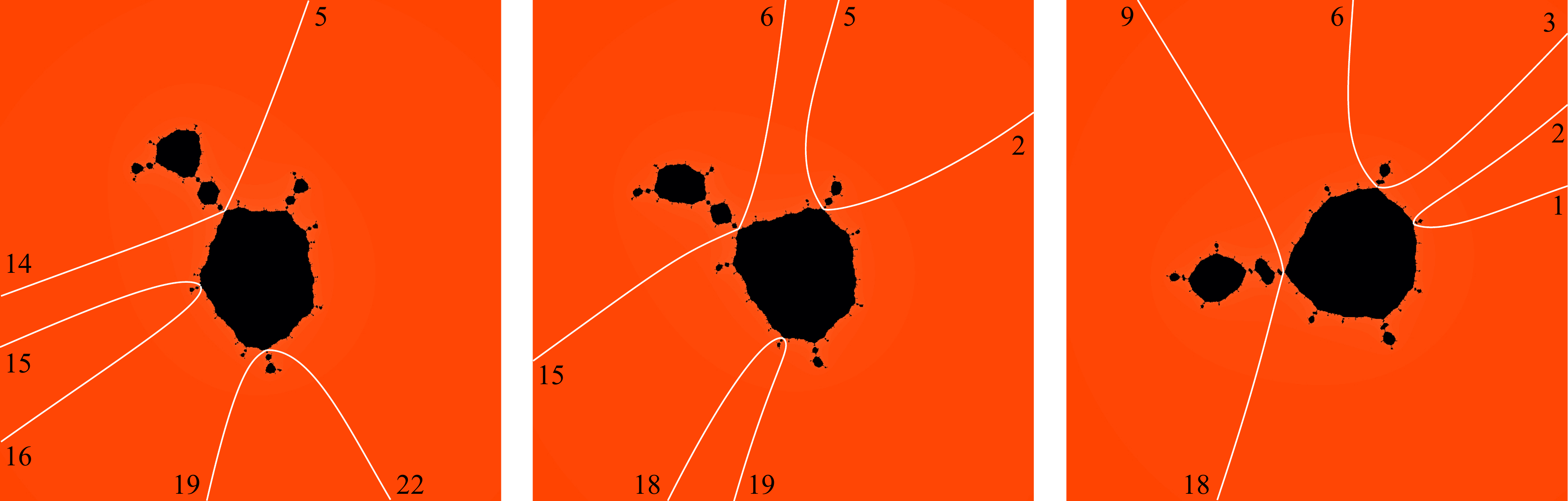}
\end{overpic}
\caption{The orbits $\OO_0 =\{ 14/26, 16/26, 22/26 \}, \ \OO_1 =\{ 5/26, 15/26, 19/26 \}, \ \OO_2 =\{ 2/26, 6/26, 18/26 \}$ and $\OO_3 = \{ 1/26, 3/26, 9/26 \}$
form all realizations of the cycle $\sigma=(1 \ 2 \ 3)$ under the tripling map $\mathbf{m}_3$. The pairs $\{ \OO_0, \OO_1 \}, \{ \OO_1, \OO_2 \}, \{ \OO_2, \OO_3 \}$ are compatible (the corresponding cubic polynomials and co-landing rays appear from left to right, with the angles shown in multiples of $1/26$).}  
\label{cub}
\end{figure}
%%%%%%%%%%%%%%%%%%%%%%%%%%%%%%%%%%%%%%%%%%%

\noindent
This was first proved by Milnor \cite{M} who used it to define the wake of each parabolic parameter on the boundary of the central hyperbolic component in the cubic slice $\{ z \mapsto a z^2 + z^3 \}_{a \in \CC}$. The resulting neighboring orbits are among the $q+1$ realizations under $\mathbf{m}_3$ of the same cycle in $\sC_q$ (the case $(d,a_q)=(1,0)$ or $(d,a_q)=(2,1)$ of the counting theorem). If we label these realizations as $\OO_0, \ldots, \OO_q$ where $\dep(\OO_j)=(j,q)$, then the neighboring orbits are precisely the pairs $\{ \OO_{j-1}, \OO_j \}$ for $1 \leq j \leq q$ (compare \figref{cub}). The dynamic rays with angles from these neighboring pairs define a new kind of co-landing pattern which is not seen in the quadratic family. \vs

The heuristic {\it principle of plenitude}, that a polynomial with given combinatorics must exist if it is not forbidden by some tangible obstruction, is generally accepted in holomorphic dynamics but of course has to be proved in each instance. A full understanding of which combinatorics of periodic orbits can exist and which combinatorics can conjoin to form a particular pattern was crucial to understanding the space of quadratic polynomials. Our analysis of general $q$-cycles in this paper can be viewed as a step toward a systematic treatment of similar questions for higher degree polynomials.  \vs

\noindent
{\it Acknowledgements.} We would like to thank IMS at Stony Brook for its hospitality during the conception of this paper. We are especially grateful to J.~Milnor for the inspiring conversations that motivated the present work. C.~L.~P. is supported by the Danish Council for Independent Research $|$ Natural Sciences via grant DFF 4181-00502. S.~Z. is partially supported by the Research Foundation of the City University of New York via grant TRADA-47-690. 

\section{The combinatorial type of a cycle}\label{CTC}

Let $\sS_q$ denote the group of all permutations of $q \geq 2$ objects. It will be convenient to think of $\sS_q$ as acting on the additive group $\ZZ/ q\ZZ$, although we often identify elements of $\ZZ/ q\ZZ$ with their unique representatives in $\{ 1,\ldots, q \}$. The collection of all $q$-cycles in $\sS_q$ is denoted by $\sC_q$. Following the tradition of group theory, we represent $\sigma \in \sC_q$ by the symbol
$$
( 1 \ \, \sigma(1) \ \, \sigma^2(1) \ \, \cdots \ \, \sigma^{q-1}(1) ). 
$$
The {\bit rotation group} $\sR_q$ is the cyclic subgroup of $\sS_q$ generated by the $q$-cycle 
$$
\rho = (1 \ \, 2 \ \, \cdots \ \, q). 
$$
Elements of $\sR_q \cap \sC_q$ are called {\bit rotation cycles}. Thus, $\sigma \in \sC_q$ is a rotation cycle if and only if $\sigma = \rho^p$ for some integer $1 \leq p < q$ relatively prime to $q$. The reduced fraction $p/q$ is called the {\bit rotation number} of $\rho^p$. \vs

The rotation group $\sR_q$ acts on $\sC_q$ by conjugation. We refer to each orbit of this action as a {\bit combinatorial type} in $\sC_q$. The combinatorial type of a $q$-cycle $\sigma$ is denoted by $[\sigma]$. It is easy to see that $\sigma$ is a rotation cycle if and only if $[\sigma]$ consists of $\sigma$ only. In fact, if $\rho \sigma \rho^{-1} = \sigma$, then $\sigma=\rho^p$ where $p=\sigma(q)$ (mod $q$). \vs

The combinatorial type of $\sigma \in \sC_q$ can be explicitly described as follows.
Let 
$$
\sG= \big\{ \rho^i : \rho^i \sigma \rho^{-i}=\sigma \big\}
$$ 
be the stabilizer of $\sigma$ under the action of $\sR_q$. We call the order of $\sG$ the {\bit symmetry order} of $\sigma$ and denote it by $\sym(\sigma)$. Evidently $\sG$ is generated by the power $\rho^r$ where $r=q/\sym(\sigma)$, and the combinatorial type of $\sigma$ is the $r$-element set 
$$
[\sigma]= \big\{ \sigma, \rho \sigma \rho^{-1}, \ldots, \rho^{r-1} \sigma \rho^{-(r-1)} \big\}.
$$
Since $\sym(\rho \sigma \rho^{-1})=\sym(\sigma)$, we can define the symmetry order of a combinatorial type unambiguously as that of any cycle representing it:
$$
\sym([\sigma])=\sym(\sigma). 
$$
 
It can be shown that for every $q \geq 5$ and every divisor $s$ of $q$ there is a $\sigma \in \sC_q$ with $\sym(\sigma)=s$. Note that for $q=2,3$ there are no combinatorial types of symmetry order $1$, while for $q=4$ there is no combinatorial type of symmetry order $2$.\footnote{For a precise count of the number of cycles in $\sC_q$ with a given symmetry order and also those with a given descent number (to be defined in the next section), see \cite{Z2}.} \vs

Of the $(q-1)!$ elements of $\sC_q$, precisely $\varphi(q)$ are rotation cycles, where $\varphi$ is Euler's totient function. If $\sigma_1, \ldots, \sigma_n$ represent the distinct combinatorial types in $\sC_q$, then
$$
(q-1)! = \sum_{\sigma_i \in \sR_q } \# [ \sigma_i] + \sum_{\sigma_i \notin \sR_q } \# [ \sigma_i] = \varphi(q)+ \sum_{\sigma_i \notin \sR_q } \# [ \sigma_i].
$$
In the special case where $q$ is a prime number, each $\# [\sigma_i]$ in the far right sum is $q$ and the number of distinct combinatorial types in $\sC_q$ is given by 
$$
n = (q-1) + \frac{(q-1)!-(q-1)}{q} = \frac{(q-1)!+(q-1)^2}{q}.
$$

\begin{example}\label{cyc5}
The $4!=24$ cycles in $\sC_5$ fall into $(4!+4^2)/5=8$ distinct combinatorial types. The $4$ rotation cycles 
\begin{align*}
\rho & =(1 \ 2 \ 3 \ 4 \ 5) \\
\rho^2 & = (1 \ 3 \ 5 \ 2 \ 4) \\
\rho^3 & = (1 \ 4 \ 2 \ 5 \ 3) \\
\rho^4 & = (1 \ 5 \ 4 \ 3 \ 2) 
\end{align*}
(with rotation numbers $1/5, 2/5, 3/5, 4/5$ respectively) form $4$ distinct combinatorial types. The remaining $20$ cycles have symmetry order $1$, so they fall into $4$ combinatorial types each containing $5$ elements. These types are represented by
\begin{align*}
\nu & =(1 \ 2 \ 3 \ 5 \ 4) \\
\nu^{-1} & = (1 \ 4 \ 5 \ 3 \ 2) \\
\sigma & = (1 \ 2 \ 4 \ 5 \ 3) \\
\sigma^{-1} & = (1 \ 3 \ 5 \ 4 \ 2) 
\end{align*}
Compare \figref{c5}.  
\end{example}

%%%%%%%%%%%%%%%%%%%%%%%%%%%%%%%%%%%%%%%%%%%
\begin{figure}[t]
\begin{overpic}[width=\textwidth]{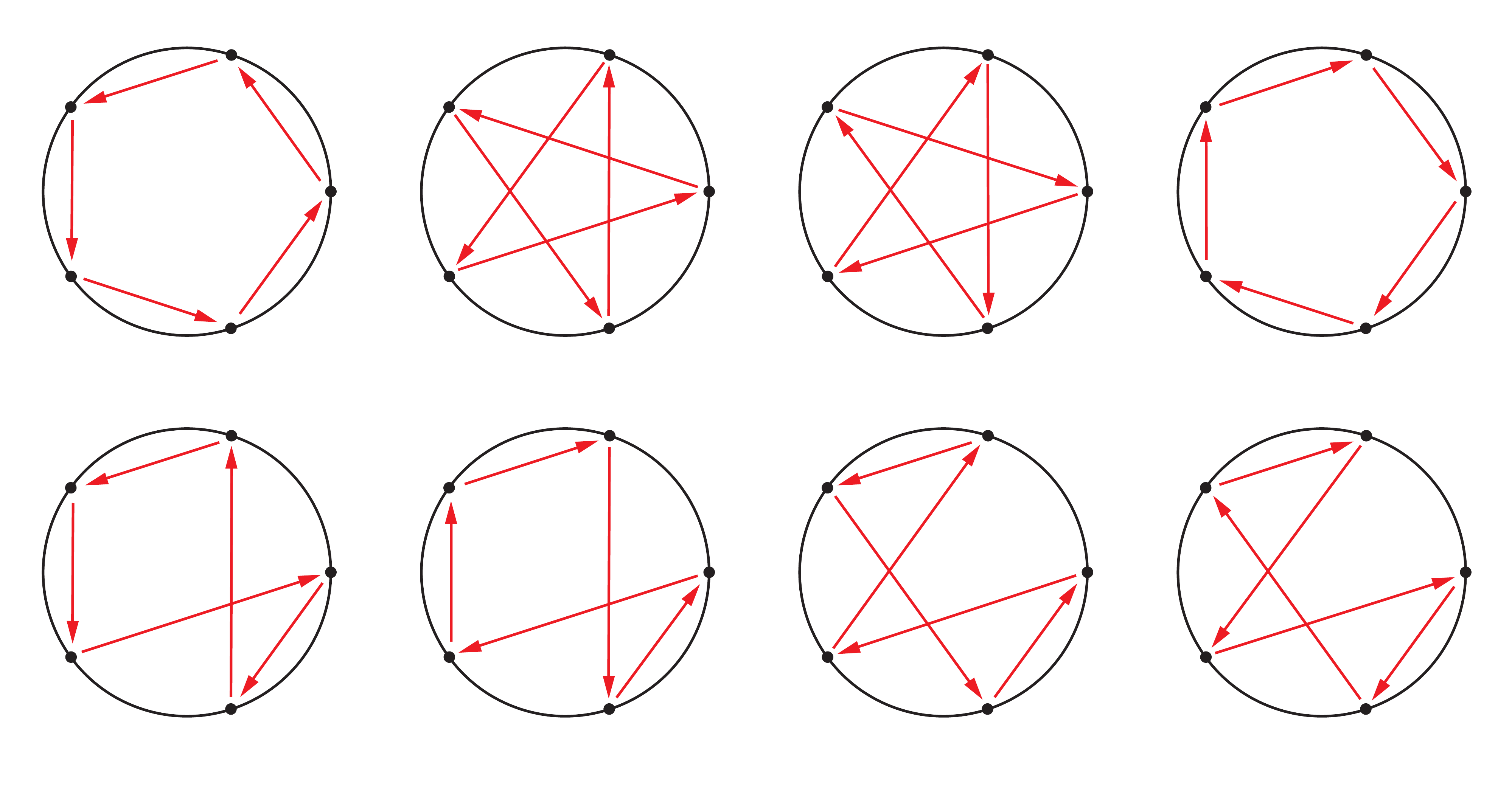}
\put (11.5,28) {\small $\rho$}
\put (37,28) {\small $\rho^2$}
\put (62,28) {\small $\rho^3$}
\put (87,28) {\small $\rho^4$}
\put (11,2.5) {\small $\nu$}
\put (36,2.5) {\small $\nu^{-1}$}
\put (62,2.5) {\small $\sigma$}
\put (86,2.5) {\small $\sigma^{-1}$}
\end{overpic}
\caption{\sl The representative cycles of the $8$ combinatorial types in $\sC_5$ described in \exref{cyc5}.}  
\label{c5}
\end{figure}
%%%%%%%%%%%%%%%%%%%%%%%%%%%%%%%%%%%%%%%%%%%

\begin{definition}\label{desn}
A permutation $\sigma \in \sS_q$ has a {\bit descent} at $i$ if $\sigma(i)>\sigma(i+1)$. The total number of such $i$ is called the {\bit descent number} of $\sigma$ and is denoted by $\des(\sigma)$. Thus,
$$
\des(\sigma) = \# \big\{ i \in \ZZ/ q\ZZ  : \sigma(i) > \sigma(i+1) \big\}. 
$$
\end{definition}

Notice that the descent number is defined using the {\it linear} order on the representatives $\{ 1, \ldots, q \}$ in $\ZZ/ q\ZZ$. \lemref{degp} below will show that any other choice of $q$ consecutive representatives would give the same count. For a topological interpretation of the descent number as the degree of an associated  covering map, see \secref{realcyc}. 

\begin{remark}
Our definition of descent number is slightly different from what is commonly used by combinatorists, as they often exclude $i=q$ from the descent count if $\sigma(q)>\sigma(1)$ (compare \cite{GKP} and \cite{S}). We have adopted the above definition to make the descent number rotationally invariant. 
\end{remark}

\begin{example}
The $5$-cycle $\sigma=(1 \ 2 \ 4 \ 5 \ 3)$, written in the classical notation as
$$
\sigma = \begin{pmatrix} 1 & 2 & 3 & 4 & 5 \\ 2 & 4 & 1 & 5 & 3 \end{pmatrix},
$$
has descents at $i=2$, $i=4$ and $i=5$, so $\des(\sigma)=3$. The eight representative cycles in $\sC_5$ described in \exref{cyc5} have the following descent numbers:
\begin{align*}
& \des(\rho)=\des(\rho^2)=\des(\rho^3)=\des(\rho^4)=1, \\
& \des(\nu)=\des(\nu^{-1})=2, \\
& \des(\sigma)=\des(\sigma^{-1})=3.
\end{align*}
\end{example} 

\begin{lemma}\label{degp}
For every $\sigma \in \sS_q$,
$$
\des(\sigma \rho^{-1})= \des(\rho \sigma)=\des(\rho \sigma \rho^{-1})=\des(\sigma).
$$ 
\end{lemma}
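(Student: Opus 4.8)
The plan is to establish the three equalities in turn, observing at the outset that the conjugation identity $\des(\rho\sigma\rho^{-1})=\des(\sigma)$ is an immediate consequence of the other two: write $\rho\sigma\rho^{-1}=(\rho\sigma)\rho^{-1}$ and apply $\des(\tau\rho^{-1})=\des(\tau)$ with $\tau=\rho\sigma$, then $\des(\rho\sigma)=\des(\sigma)$. So the real content is the pair of identities $\des(\sigma\rho^{-1})=\des(\sigma)$ and $\des(\rho\sigma)=\des(\sigma)$, and these behave quite differently. Precomposing with $\rho^{-1}$ merely relabels the index set $\ZZ/q\ZZ$ cyclically, so it only permutes the descent set of $\sigma$; postcomposing with $\rho$ rotates the \emph{values} of $\sigma$ and needs more care, because $\rho$ viewed on the representatives $\{1,\ldots,q\}$ is not order-preserving.

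For $\des(\sigma\rho^{-1})=\des(\sigma)$ I would argue directly with descent sets. Since $(\sigma\rho^{-1})(i)=\sigma(i-1)$, an index $i$ is a descent of $\sigma\rho^{-1}$ exactly when $\sigma(i-1)>\sigma(i)$, i.e. exactly when $i-1$ is a descent of $\sigma$. Hence the descent set of $\sigma\rho^{-1}$ is the image of that of $\sigma$ under $i\mapsto i+1$, and in particular the two sets have the same cardinality. (This is where the choice, in \defref{desn}, to let $i$ range over all of $\ZZ/q\ZZ$ rather than excluding $i=q$ pays off.)

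The main step is $\des(\rho\sigma)=\des(\sigma)$. I would write $(\rho\sigma)(i)=f(\sigma(i))$, where $f\colon\{1,\ldots,q\}\to\{1,\ldots,q\}$ is the bijection given by $f(x)=x+1$ for $x<q$ and $f(q)=1$; thus $f$ preserves the order of any two elements of $\{1,\ldots,q-1\}$ but moves the top value $q$ to the bottom. Let $i_0\in\ZZ/q\ZZ$ be the unique index with $\sigma(i_0)=q$. For every $i\notin\{i_0-1,i_0\}$ both $\sigma(i)$ and $\sigma(i+1)$ lie in $\{1,\ldots,q-1\}$, so $\sigma(i)>\sigma(i+1)$ iff $f(\sigma(i))>f(\sigma(i+1))$ and the descent status at $i$ is unchanged. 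At $i=i_0$ the cycle $\sigma$ has a descent (since $\sigma(i_0)=q$ is maximal) while $\rho\sigma$ does not (since $f(q)=1$ is minimal and $\sigma(i_0+1)\neq q$); at $i=i_0-1$ the reverse happens ($\sigma$ has an ascent there, $\rho\sigma$ a descent). As $q\ge 2$, the two exceptional indices $i_0-1$ and $i_0$ are distinct, so these two changes cancel and $\des(\rho\sigma)=\des(\sigma)$.

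Finally I would assemble the conjugation identity from the two established ones as indicated above. The only delicate point, and the one I expect to require the most care, is the bookkeeping at the exceptional indices $i_0-1$ and $i_0$ in the middle step: one must check that exactly these two indices are affected by the non-monotonicity of $\rho$, that the descent status flips in opposite directions at them, and that they are genuinely distinct. Everything else is routine index-chasing.
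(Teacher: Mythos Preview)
Your proposal is correct and follows essentially the same approach as the paper: both treat $\des(\sigma\rho^{-1})=\des(\sigma)$ by observing that the descent set shifts by one under precomposition with $\rho^{-1}$, both handle $\des(\rho\sigma)=\des(\sigma)$ by a case analysis hinging on the unique index where $\sigma$ takes the value $q$, and both derive the conjugation identity from these two. The only minor difference is presentational: the paper phrases the middle step as a pair of inequalities (mapping descents of $\sigma$ to descents of $\rho\sigma$ and back), whereas you isolate the two exceptional indices $i_0-1,i_0$ and show the descent status flips in opposite directions there---arguably a cleaner bookkeeping of the same observation.
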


This allows us to define the descent number of a combinatorial type in $\sC_q$ unambiguously as that of any cycle representing it:
$$
\des([\sigma]) = \des(\sigma).
$$

\begin{proof}
Evidently $\sigma$ has a descent at $i$ if and only if $\sigma \rho^{-1}$ has a descent at $i+1$, so the equality $\des(\sigma \rho^{-1})=\des(\sigma)$ holds. \vs

Next, suppose $\sigma$ has a descent at $i$. Then $\rho \sigma$ has a descent at $i$ or $i-1$ according as $\sigma(i) \neq q$ or $\sigma(i)=q$. This shows $\des(\rho \sigma) \geq \des (\sigma)$. Similarly, if $\rho \sigma$ has a descent at $i$, then $\sigma$ has a descent at $i$ or $i+1$ according as $\rho \sigma(i+1) \neq 1$ or $\rho \sigma(i+1)=1$. This proves the reverse inequality $\des(\rho \sigma) \leq \des (\sigma)$, so $\des(\rho \sigma)=\des(\sigma)$ holds as well. \vs 

Finally, the identity $\des(\rho \sigma \rho^{-1})=\des(\sigma)$ follows from the two proved above.  
\end{proof} 
  
\begin{example}\label{rc=1}
$\sigma \in \sC_q$ is a rotation cycle if and only if $\des(\sigma)=1$. Here is why: If $\sigma$ is a rotation cycle, then $\sigma=\rho^p$ for some $p$, so by \lemref{degp}, $\des(\sigma)=\des(\rho)=1$. Conversely, suppose $\des(\sigma)=1$ and that the unique descent of $\sigma$ occurs at $i=q-p$ for some $p$. Then, $\nu = \sigma \rho^{-p}$ has a unique descent at $i=q$, so $\nu(1)<\nu(2)<\cdots<\nu(q)$. This can happen only if $\nu(i)=i$ for all $i$. It follows that $\nu = \text{id}$ and $\sigma = \rho^p$.      
\end{example}

\begin{example}
This is inspired by a suggestion of D. Thurston. Let $p$ be an odd prime and $d$ be a generator of the multiplicative group $(\ZZ/p\ZZ)^\ast$ of integers $\{ 1, \ldots, p-1 \}$ modulo $p$ (there are $\varphi(p-1)$ such generators). Consider the permutation $\sigma \in \sC_{p-1}$ defined by multiplication by $d$:
$$
\sigma(i)=di \ (\text{mod} \ p). 
$$
If $1 \leq \sigma(i) \leq p-d-1$, then $\sigma(i+1)=\sigma(i)+d>\sigma(i)$ so $i$ is not a descent of $\sigma$. If $p-d+1 \leq \sigma(i) \leq p-1$, then $\sigma(i+1)=\sigma(i)+d-p<\sigma(i)$, so $i$ is a descent of $\sigma$. Finally, if $\sigma(i)=p-d$, then necessarily $i=p-1$, and $i$ is a descent of $\sigma$ if and only if $2d<p$. It follows that 
$$
\des(\sigma)= \begin{cases} d & \quad \text{if} \ 2d<p \\ d-1 & \quad \text{if} \ 2d>p.  \end{cases}
$$             
For example, multiplication by $d=3$ modulo $p=7$ gives the $6$-cycle $\sigma=(1 \ 3 \ 2 \ 6 \ 4 \ 5)$ with $\des(\sigma)=3$, while multiplication by $d^{-1}=5$ modulo $7$ gives the inverse $6$-cycle $\sigma^{-1}=(1 \ 5 \ 4 \ 6 \ 2 \ 3)$ with $\des(\sigma^{-1})=4$.  
\end{example}

It is clear that every $2$- or $3$-cycle is a rotation cycle and therefore has descent number $1$. More generally, we have the following  

\begin{theorem}
If $\sigma \in \sC_q$ and $q \geq 3$, then $1 \leq \des(\sigma) \leq q-2$.   
\end{theorem}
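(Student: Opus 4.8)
The plan is to prove both inequalities separately. The lower bound $\des(\sigma) \geq 1$ is immediate: if $\des(\sigma) = 0$, then $\sigma(1) < \sigma(2) < \cdots < \sigma(q)$ (using the linear order on representatives, and noting that $\sigma(q) > \sigma(1)$ would be needed for a descent at $i = q$, so actually $\sigma(q) < \sigma(1)$, a contradiction unless there is nothing below $\sigma(1)$). More carefully: a permutation with no descents at any $i \in \ZZ/q\ZZ$ would in particular satisfy $\sigma(1) < \sigma(2) < \cdots < \sigma(q)$ from the descents at $i = 1, \ldots, q-1$, forcing $\sigma = \id$, which is not a $q$-cycle for $q \geq 2$. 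So $\des(\sigma) \geq 1$ always.

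For the upper bound, I would argue by contradiction: suppose $\des(\sigma) = q - 1$, i.e., $\sigma$ has a descent at every index except exactly one. By \lemref{degp} the descent number is rotation-invariant, so after replacing $\sigma$ by a conjugate $\rho^j \sigma \rho^{-j}$ I may assume the unique non-descent occurs at $i = q$, meaning $\sigma(q) < \sigma(1)$ fails — wait, rather: the non-descent at $i=q$ means $\sigma(q) < \sigma(q+1) = \sigma(1)$. Combined with descents at $i = 1, \ldots, q-1$, we get $\sigma(1) > \sigma(2) > \cdots > \sigma(q)$, hence $\sigma(1) = q, \sigma(2) = q-1, \ldots, \sigma(q) = 1$. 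But this permutation is $i \mapsto q + 1 - i$, which is a product of transpositions $\{i, q+1-i\}$; it is a $q$-cycle only if $q = 2$. Since we assumed $q \geq 3$, this is a contradiction, so $\des(\sigma) \leq q - 2$.

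The one subtlety to handle carefully is the indexing convention in \defref{desn}: the descent set ranges over $i \in \ZZ/q\ZZ$, so the index $q$ (where one compares $\sigma(q)$ with $\sigma(1)$) genuinely counts. This is exactly why the reversal permutation $i \mapsto q+1-i$ has descent number $q - 1$ and not $q$ — it has its unique non-descent precisely at the wrap-around index. So the extremal value $q-1$ is attained only by (rotations of) the reversal, which fails to be a $q$-cycle once $q \geq 3$; this is the heart of the argument and the only place where $q \geq 3$ is used.

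I expect no real obstacle here; the main point requiring a moment's care is simply translating "$\des(\sigma) = q-1$" into "$\sigma$ restricted to some block of $q$ consecutive representatives is strictly decreasing" using \lemref{degp}, and then recognizing the resulting permutation as the reversal. An alternative, perhaps slicker, route: observe that $\des(\sigma) + \des(\sigma^{-1})$ has a natural interpretation or bound, or use the fact that $\des(\sigma) = q - 1$ would force $\sigma$ to be an orientation-reversing involution on the cyclic order — but the direct argument above is cleanest and self-contained given \lemref{degp}.
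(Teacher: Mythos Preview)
Your overall strategy matches the paper's: assume $\des(\sigma)=q-1$, reduce to the reversal, and note that the reversal is not a $q$-cycle for $q\geq 3$. The lower bound is fine. But your reduction step has a genuine gap.

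You invoke \lemref{degp} to replace $\sigma$ by a conjugate $\rho^j\sigma\rho^{-j}$ with its unique ascent at $i=q$. \lemref{degp} only tells you that conjugation preserves the \emph{number} of descents; it does not say conjugation shifts their \emph{positions}. It is pre-composition that shifts positions: the proof of \lemref{degp} shows $\sigma$ has a descent at $i$ iff $\sigma\rho^{-1}$ has one at $i+1$. Conjugation behaves differently. If $\sigma(i)=n-i+1\pmod q$ (the reflection with parameter $n+1$), then $\rho^j\sigma\rho^{-j}(i)=(n+2j+1)-i$, so conjugation only reaches reflections whose parameter has the same parity as $n+1$. When $q$ is even and $n$ is odd, no conjugate is the standard reversal $i\mapsto q+1-i$, so no conjugate has its ascent at $q$. (In that case $\sigma$ actually has a fixed point and is not a $q$-cycle---but you have not shown this, and showing it is essentially the whole proof.)

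The paper avoids this by pre-composing: set $\nu=\sigma\rho^{n-q}$, which genuinely has its ascent at $q$, hence $\nu(i)=q-i+1$; translating back gives $\sigma(i)=n-i+1\pmod q$, and then one checks directly that this has a fixed point ($n$ odd) or a $2$-cycle ($n$ even). A cleaner variant you could use: the pre-composition argument shows (without any cycle hypothesis) that every $\sigma\in\sS_q$ with $\des(\sigma)=q-1$ is a reflection $i\mapsto c-i$; since reflections are involutions, they contain no cycle of length exceeding $2$, and you are done. Either way, the key is that pre-composition---not conjugation---is what lets you normalize the ascent position.
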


\begin{proof}
Evidently $1 \leq \des(\sigma) \leq q-1$, so we only need to rule out the possibility $\des(\sigma)=q-1$. Assume by way of contradiction that this is the case, so $\sigma$ has $q-1$ descents and a unique ``ascent'' at some $i=n$. Then the permutation $\nu=\sigma \rho^{n-q}$ has a unique ascent at $i=q$, so $\nu(1)>\nu(2)>\cdots>\nu(q)$. This implies $\nu(i)=q-i+1$, or equivalently $\sigma(i)=n-i+1$ for all $i$. If $n=2j-1$ is odd, it follows that $j$ is a fixed point of $\sigma$, which is impossible. If $n=2j$ is even, it follows that $\sigma(j)=j+1$ and $\sigma(j+1)=j$. This too is impossible since $\sigma$, being a cycle of length $\geq 3$, cannot contain a $2$-cycle.             
\end{proof}

\section{Covering maps and fixed points}\label{sec:CMFP}

We will be working with the additive model of the circle as the quotient $\RR/\ZZ$ under the natural projection $\pi: \RR \to \RR/\ZZ$. Let us say that $3$ or more distinct points $x_1,x_2, \ldots, x_n$ on the circle are in {\bit positive cyclic order} if they have representatives $t_i \in \pi^{-1}(x_i)$ on the real line such that $t_1<t_2<\cdots<t_n<t_1+1$. For a distinct pair of points $a,b$ on the circle, the open interval $(a,b)$ consists of all $x \in \RR/\ZZ$ for which $a,x,b$ are in positive cyclic order. We define the intervals $[a,b],(a,b],[a,b)$ by adding suitable boundary points to $(a,b)$. When there is no danger of confusion, we may identify a point $x$ on the circle with its unique representative $t \in \pi^{-1}(x)$ which satisfies $0 \leq t <1$. \vs
     
A continuous map $f: \RR/\ZZ \to \RR/\ZZ$ is a covering map of degree $k \geq 1$ if it lifts under $\pi$ to a homeomorphism $F:\RR \to \RR$ which satisfies $F(t+1)=F(t)+k$ for all $t$. The fixed points of $f$ are in one-to-one correspondence with the points $0 \leq u <1$ for which $F(u)-u \in \ZZ$. We call the fixed point {\bit topologically repelling} if $t \mapsto F(t)-t$ is strictly increasing in some neighborhood of $u$. This condition holds, for example, if $F$ is $C^1$-smooth near $u$ and $F'(u)>1$. 

\begin{lemma}\label{fpcount}
A degree $k$ covering map $f: \RR/\ZZ \to \RR/\ZZ$ has at least $k-1$ fixed points. If all fixed points of $f$ are topologically repelling, then $f$ has precisely $k-1$ fixed points.
\end{lemma}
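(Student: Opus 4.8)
The plan is to work with the lift $F:\RR\to\RR$ of $f$ satisfying $F(t+1)=F(t)+k$, and to study the auxiliary function $g(t)=F(t)-t$, which is continuous and satisfies $g(t+1)=g(t)+(k-1)$. Fixed points of $f$ correspond bijectively to solutions $0\le u<1$ of $g(u)\in\ZZ$. So the lemma is really a statement about how many times the continuous function $g$, which increases by exactly $k-1$ over a fundamental domain $[0,1)$, hits integer values.

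First I would establish the lower bound. Pick any starting value $t_0$ and let $m=\lceil g(t_0)\rceil$ if $g(t_0)\notin\ZZ$, or $m=g(t_0)+1$ otherwise, so that $m,m+1,\ldots,m+k-2$ are $k-1$ consecutive integers lying in the half-open interval $(g(t_0),\,g(t_0)+(k-1)]=(g(t_0),\,g(t_0+1)]$. Since $g$ is continuous on $[t_0,t_0+1]$, the intermediate value theorem produces, for each such integer value $j$, a point where $g$ attains $j$; choosing for each $j$ the \emph{largest} such point in $[t_0,t_0+1]$ (or the smallest — one just needs a consistent rule) gives $k-1$ distinct points $u_1<\cdots<u_{k-1}$ in $(t_0,t_0+1]$ at which $g\in\ZZ$. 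Distinctness is automatic because $g$ takes a different integer value at each. Reducing mod $1$ and using that $g(t+1)-g(t)=k-1\in\ZZ$ shows these descend to $k-1$ distinct fixed points of $f$ on the circle. This handles the first assertion.

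For the second assertion, suppose every fixed point of $f$ is topologically repelling, i.e. at each solution $u$ of $g(u)\in\ZZ$ the function $g$ is strictly increasing on a neighborhood of $u$. The key consequence is that $g$ cannot return to the same integer value: if $g(u)=g(u')=j$ with $u<u'$ and no fixed point strictly between, then... more carefully, I would argue that between two consecutive fixed points $g$ is strictly monotone, or at least that $g$ crosses each integer level exactly once per period. The clean way: consider the level sets $g^{-1}(j)$ for $j\in\ZZ$. Each nonempty such set, intersected with a period, is compact; at its maximum and minimum points $g$ is locally strictly increasing by the repelling hypothesis, which forces the level set to be a single point (a local max of $g$ at a point where $g$ is locally strictly increasing is impossible unless the level set is degenerate — work this out via the definition). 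Hence $g$ hits each integer at most once per period, and combined with the lower bound of exactly $k-1$ values $\{m,\ldots,m+k-2\}$ being hit and no others possible (since $g(t_0+1)=g(t_0)+(k-1)$ caps the range of integer values crossed), we get exactly $k-1$ fixed points.

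The main obstacle is the second part: converting the pointwise ``topologically repelling'' hypothesis (local strict monotonicity of $g$ at each zero of $g-j$) into the global statement that each integer level is crossed exactly once. The subtlety is that $g$ need not be monotone globally, and a priori $g$ could oscillate and hit an integer value on a whole interval or multiple times. The argument must rule out a local maximum (or a plateau) of $g$ at a point where $g$ is locally strictly increasing, which is a contradiction in terms, and then a compactness argument on the level set $g^{-1}(j)\cap[t_0,t_0+1]$ closes it. I would also need to be slightly careful about fixed points landing exactly at the endpoint of the chosen fundamental domain, but shifting $t_0$ generically avoids this. Everything else — the intermediate value argument, the bookkeeping with $g(t+1)=g(t)+(k-1)$, and the passage between lifts and the circle — is routine.
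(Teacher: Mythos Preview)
Your approach is essentially identical to the paper's: both lift to $F$, set $g(t)=F(t)-t$ (the paper calls it $T$), apply the intermediate value theorem to the $k-1$ integers in a half-open interval between $g(t_0)$ and $g(t_0+1)$ for the lower bound, and then use the local strict monotonicity of $g$ at each integer value to force uniqueness of each crossing for the exact count. One small point to tighten: your parenthetical that the relation $g(t_0+1)=g(t_0)+(k-1)$ by itself ``caps the range of integer values crossed'' is not sufficient, since $g$ is not assumed globally monotone and could in principle dip below $g(t_0)$ or rise above $g(t_0+1)$; ruling out integers outside $[g(t_0),g(t_0+1)]$ requires the \emph{same} local-monotonicity-at-integer-values argument you use for uniqueness (and the paper makes this step explicit).
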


\begin{proof}
Take any lift $F: \RR \to \RR$ of $f$ and consider the function $T(t)=F(t)-t$ for $0 \leq t \leq 1$ which satisfies $T(1)=T(0)+k-1$. By the intermediate value theorem, for each of the $k-1$ integers $j$ satisfying $T(0) \leq j < T(1)$, the equation $T(t)=j$ has at least one solution. This shows that $f$ has at least $k-1$ fixed points on the circle. If all fixed points of $f$ are topologically repelling, then $T$ is increasing in some neighborhood of $t$ whenever $T(t)$ is an integer. It easily follows that the equation $T(t)=j$ has no solution if $j<T(0)$ or $j>T(1)$, and that for $T(0) \leq j < T(1)$ the solution to the equation $T(t)=j$ is unique.    
\end{proof} 

\begin{remark}
An alternative route to the above count in the topologically repelling case is the Lefschetz fixed point formula
$$
\sum_{f(x)=x} \operatorname{index}(f,x) = \sum_{i=0}^1 (-1)^i \operatorname{tr}\big( f_\ast: H_i(\RR/\ZZ) \to H_i(\RR/\ZZ) \big). 
$$
Since every topologically repelling fixed point has index $-1$, this formula reduces to
$$
\sum_{f(x)=x} (-1) = 1-k,
$$    
which shows that the number of fixed points of $f$ is $k-1$.
\end{remark}

Let $f,g: \RR/\ZZ \to \RR/\ZZ$ be covering maps and $a,b$ be distinct points on the circle with $f(a)=g(a)$ and $f(b)=g(b)$. Take representatives $\hat{a} \in \pi^{-1}(a)$ and $\hat{b} \in \pi^{-1}(b)$ with $\hat{a}<\hat{b}<\hat{a}+1$. Let $F: \RR \to \RR$ be any lift of $f$ and choose the lift $G: \RR \to \RR$ of $g$ such that $G(\hat{a})=F(\hat{a})$. Then $G(\hat{b})=F(\hat{b})+p$ for some integer $p$ which is independent of all the choices. By switching the roles of $f,g$ if necessary, we may assume $p \geq 0$. Intuitively, this means that the image of $[a,b]$ under $g$ starts at $g(a)=f(a)$ and winds counterclockwise around the circle $p$ full times more than $f$ before it ends at $g(b)=f(b)$. It will be convenient to express this situation by saying that $g$ is a {\bit $p$-winding} of $f$ on $[a,b]$. Evidently, any two $p$-windings of $f$ on $[a,b]$ are homotopic rel $\{ a, b \}$. \vs

One can easily construct $p$-windings as follows: Start with a covering map $f$ of degree $k$, an interval $[a,b]$ on the circle and an integer $p \geq 0$. Choose representatives $\hat{a},\hat{b}$ of $a,b$ with $\hat{a}<\hat{b}<\hat{a}+1$ and a lift $F$ of $f$. Modify $F$ to a new map $G$ by setting
$$
G(t)= \begin{cases} F(t)+ \left(\dfrac{t-\hat{a}}{\hat{b}-\hat{a}} \right) p & \qquad \hat{a} \leq t \leq \hat{b} \vs \\
F(t)+p & \qquad \hat{b} \leq t \leq \hat{a}+1 \end{cases} 
$$
and then extend $G$ to the real line by the relation $G(t+1)=G(t)+k+p$. The induced circle map $g: \RR/\ZZ \to \RR/\ZZ$ is clearly a covering map of degree $k+p$ which is a $p$-winding of $f$ on $[a,b]$. \vs

If $f: \RR/\ZZ \to \RR/\ZZ$ is a covering that maps $[a,b]$ homeomorphically onto $[f(a),f(b)]$, then any covering $g: \RR/\ZZ \to \RR/\ZZ$ with the same values as $f$ on the boundary $\{ a, b \}$ must be a $p$-winding of $f$ on $[a,b]$ for some $p \geq 0$. Under such $g$, every point in $[g(a),g(b)]=[f(a),f(b)]$ will have $p+1$ preimages in $[a,b]$.   

\begin{lemma}\label{fp+k}
Suppose $g$ is a $p$-winding of $f$ on $[a,b]$ for some $p \geq 0$. If all fixed points of $f$ and $g$ in $[a,b]$ are topologically repelling, then $g$ has $p$ fixed points more than $f$ in $[a,b]$.   
\end{lemma}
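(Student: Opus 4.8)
The plan is to lift everything to $\RR$ and turn the statement into an elementary count of integer values of a displacement function on a fundamental interval. Fix representatives $\hat a<\hat b<\hat a+1$ of $a,b$, a lift $F$ of $f$, and the lift $G$ of $g$ normalized by $G(\hat a)=F(\hat a)$, so that $G(\hat b)=F(\hat b)+p$ by the very definition of a $p$-winding. Set $T_F(t)=F(t)-t$ and $T_G(t)=G(t)-t$ on $[\hat a,\hat b]$. Since $\pi$ carries $[\hat a,\hat b]$ bijectively onto $[a,b]$, the fixed points of $f$ in $[a,b]$ correspond precisely to $Z_F:=\{t\in[\hat a,\hat b]:T_F(t)\in\ZZ\}$, and likewise those of $g$ to $Z_G$. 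The normalization yields
$$
T_G(\hat a)=T_F(\hat a),\qquad T_G(\hat b)=T_F(\hat b)+p,
$$
and the hypothesis that all fixed points of $f$ (resp. $g$) in $[a,b]$ are topologically repelling says exactly that $T_F$ (resp. $T_G$) is strictly increasing in a (one-sided at the endpoints) neighborhood of each point of $Z_F$ (resp. $Z_G$). So the statement reduces to the purely real-variable claim $\#Z_G=\#Z_F+p$.

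The heart of the matter is the following elementary fact, which I would isolate and prove first: \emph{if $T:[c,d]\to\RR$ is continuous and strictly increasing in a neighborhood of every point where it takes an integer value, then $Z=\{t\in[c,d]:T(t)\in\ZZ\}$ is finite and $\#Z=\lfloor T(d)\rfloor-\lceil T(c)\rceil+1$.} The argument runs as follows. A short enough monotonicity neighborhood of a point of $Z$ contains no integer value other than the one attained at that point, so $Z$ is discrete; being closed in the compact interval, it is therefore finite, say $Z=\{s_1<\cdots<s_m\}$ with $n_i=T(s_i)$. If some integer value were attained twice, then choosing the two nearest such points and applying the intermediate value theorem in between would force $T$ to stay on one side of that integer, contradicting strict monotonicity at one of the two endpoints; hence the $n_i$ are distinct. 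On each gap $(s_i,s_{i+1})$ the map $T$ avoids $\ZZ$, lies above $n_i$ near $s_i$ and below $n_{i+1}$ near $s_{i+1}$, so by the intermediate value theorem it is trapped in $(n_i,n_{i+1})$ there and $n_{i+1}=n_i+1$; the same reasoning on $[c,s_1)$ and $(s_m,d]$ gives $n_1=\lceil T(c)\rceil$ and $n_m=\lfloor T(d)\rfloor$. (If $Z=\es$, then $T([c,d])$ is a connected subset of $\RR\sm\ZZ$, hence lies in a single open unit interval, and the formula reads $0$.) Therefore $\#Z=m=\lfloor T(d)\rfloor-\lceil T(c)\rceil+1$.

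Applying this to $T_F$ on $[\hat a,\hat b]$ gives $\#Z_F=\lfloor T_F(\hat b)\rfloor-\lceil T_F(\hat a)\rceil+1$, and applying it to $T_G$ together with the displacement identities above gives
$$
\#Z_G=\big\lfloor T_F(\hat b)+p\big\rfloor-\big\lceil T_F(\hat a)\big\rceil+1=\#Z_F+p
$$
because $p\in\ZZ$. Finally $a$ and $b$ are fixed by $f$ if and only if they are fixed by $g$ (the two maps agree there), so this count over the closed interval $[a,b]$ is precisely the assertion that $g$ has $p$ more fixed points than $f$ in $[a,b]$.

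I expect the only genuine work to be the elementary fact above, specifically the intermediate-value bookkeeping showing that the integer values of $T$ are each attained at most once and then in consecutive increasing order. One conceptual point worth recording is that the ``repelling'' hypothesis on $f$ \emph{and} on $g$ is what makes the two floor/ceiling expressions mutually compatible: the quantity $\lfloor T(d)\rfloor-\lceil T(c)\rceil+1$ is automatically non-negative precisely because such a $T$ exists, so one is not free to prescribe $T_F(\hat a)$ and $T_F(\hat b)$ independently of the repelling assumption.
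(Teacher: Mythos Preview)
Your proof is correct and follows essentially the same approach as the paper: lift to $\RR$, set $T_F(t)=F(t)-t$ and $T_G(t)=G(t)-t$, and count the integer values each displacement attains on $[\hat a,\hat b]$, using the repelling hypothesis to ensure each integer is hit at most once. The paper states the count more tersely (as the number of integers $j$ with $T(\hat a)\le j\le T(\hat b)$) while you spell out the intermediate-value bookkeeping behind the formula $\lfloor T(d)\rfloor-\lceil T(c)\rceil+1$, but the substance is identical.
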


\begin{proof}
The argument is similar to the proof of \lemref{fpcount}. Take representatives $\hat{a},\hat{b}$ with $\hat{a}<\hat{b}<\hat{a}+1$ and lifts $F,G$ with $G(\hat{a})=F(\hat{a})$ and $G(\hat{b})=F(\hat{b})+p$. Set $T_1(t)=F(t)-t$ and $T_2(t)=G(t)-t$. If the fixed points of $f$ in $[a,b]$ are topologically repelling, the equation $T_1(t)=j \in \ZZ$ has a solution $\hat{a} \leq t \leq \hat{b}$ if and only if $T_1(\hat{a}) \leq j \leq T_1(\hat{b})$ and this solution is unique. Therefore, the number of fixed points of $f$ in $[a,b]$ is equal to the number of integers $j$ satisfying $T_1(\hat{a}) \leq j \leq T_1(\hat{b})$. A similar argument shows that the number of fixed points of $g$ in $[a,b]$ is equal to the number of integers $j$ satisfying $T_2(\hat{a}) \leq j \leq T_2(\hat{b})$. This proves the theorem since $T_2(\hat{a})=T_1(\hat{a})$ and $T_2(\hat{b})=T_1(\hat{b})+p$.        
\end{proof}

\section{Realizations of cycles}\label{realcyc}

We now turn to the problem of realizing cycles in $\sC_q$ as periodic orbits of covering maps of the circle. {\it By convention, we always assume that a period $q$ orbit $\{ x_1, \ldots, x_q \}$ on the circle is labeled so that the points $0,x_1,\ldots,x_q$ are in positive cyclic order. The indices of the $x_i$ are always taken modulo $q$.} 

\begin{definition}
A {\bit realization} of the cycle $\sigma \in \sC_q$ is a pair $(f,\OO)$, where $f: \RR/\ZZ \to \RR/\ZZ$ is a covering map, $\OO=\{ x_1, \ldots, x_q \}$ is a period $q$ orbit of $f$, and $f(x_i)=x_{\sigma(i)}$ for all $i$. By a slight abuse of language, sometimes we refer to $\OO$ itself as a realization of $\sigma$ {\bit under} $f$. By the {\bit degree} of the realization $(f,\OO)$ is meant the mapping degree of $f$. A realization of $\sigma$ of the lowest possible degree is called a {\bit minimal realization} of $\sigma$.     
\end{definition} 

The notion of a realization extends naturally to combinatorial types: $(f,\OO)$ is a realization of a combinatorial type $\tau$ in $\sC_q$ if it is a realization of some cycle $\sigma$ with $[\sigma]=\tau$. \vs

The following result (in a slightly different language) appears in \cite{Mc}: 

\begin{theorem}\label{lbound}
A cycle $\sigma \in \sC_q$ has a realization of degree $k$ if and only if $k \geq \des(\sigma)$.
\end{theorem}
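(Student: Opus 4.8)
The plan is to prove the two implications separately: necessity ($k\geq\des(\sigma)$ whenever a realization of degree $k$ exists) by a counting argument on a lift, and sufficiency by an explicit construction. For necessity, suppose $(f,\OO)$ realizes $\sigma$ with $\deg f=k$, write $\OO=\{x_1,\dots,x_q\}$ with representatives $0<\hat x_1<\cdots<\hat x_q<1$ as in our convention, and fix a lift $F$ with $F(t+1)=F(t)+k$. Since $f(x_i)=x_{\sigma(i)}$, there are unique integers $m_i$ with $F(\hat x_i)=m_i+\hat x_{\sigma(i)}$ (with $\hat x_{\sigma(i)}$ the representative in $(0,1)$). Monotonicity of $F$ forces $m_{i+1}\geq m_i$ for $1\leq i\leq q-1$, with $m_{i+1}\geq m_i+1$ exactly when $\sigma(i)>\sigma(i+1)$; and $F(\hat x_q)<F(\hat x_1)+k$ gives $m_q-m_1\leq k$, improved to $m_q-m_1\leq k-1$ when $\sigma(q)>\sigma(1)$. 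Writing $\varepsilon=1$ if $\sigma$ has a descent at $q$ and $0$ otherwise, telescoping the first family of inequalities yields $m_q-m_1\geq\des(\sigma)-\varepsilon$, while the second gives $m_q-m_1\leq k-\varepsilon$; hence $\des(\sigma)\leq k$.

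For sufficiency, given $k\geq d:=\des(\sigma)$, I would construct a realization of degree $k$ by hand. Choose any $0<\hat x_1<\cdots<\hat x_q<1$, set $m_1=0$, and recursively put $m_{i+1}=m_i+1$ if $\sigma$ has a descent at $i$ and $m_{i+1}=m_i$ otherwise, for $1\leq i\leq q-1$; let $y_i=m_i+\hat x_{\sigma(i)}$. The choice of the $m_i$ guarantees $y_1<\cdots<y_q$, and — examining separately the cases where $\sigma$ does or does not have a descent at $q$ — also $y_q<y_1+d\leq y_1+k$. Let $F\colon[\hat x_1,\hat x_1+1]\to[y_1,y_1+k]$ be any increasing homeomorphism with $F(\hat x_i)=y_i$ for $1\leq i\leq q$ and $F(\hat x_1+1)=y_1+k$, extended to $\RR$ by $F(t+1)=F(t)+k$. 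Then $F$ is the lift of a degree-$k$ covering map $f$ with $f(x_i)=x_{\sigma(i)}$; since $\sigma\in\sC_q$ is a $q$-cycle, $\OO=\{x_1,\dots,x_q\}$ is a single period-$q$ orbit, so $(f,\OO)$ is the required realization. (Alternatively, one may first construct the degree-$d$ realization this way and then pass to degree $k$ by replacing $f$ with a $(k-d)$-winding of $f$ on a short interval $[a,b]\subset(x_i,x_{i+1})$ disjoint from $\OO$, using the winding construction of \secref{sec:CMFP}; such a winding alters $f$ only through an integer translation of the lift off $[a,b]$, hence leaves the values of $f$ on $\OO$ — and the cycle realized — untouched while raising the degree by $k-d$.)

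In both directions the one step requiring genuine care is the same: the accounting of the ``wrap-around'' descent at $i=q$, reflecting the fact that $\des(\sigma)$ is a cyclic count. In the necessity argument the term $\varepsilon$ appears on both sides of the final chain of inequalities and cancels; in the construction it is precisely what makes the prescribed integers $m_i$ compatible with a genuine monotone lift of degree $d$ (via the inequality $y_q<y_1+d$). A careless treatment risks an off-by-one error, i.e. $k\geq d-1$ or $k\geq d+1$ in place of $k\geq d$. Everything else — translating monotonicity of $F$ into inequalities among the $m_i$, the existence of the interpolating homeomorphism, and checking that $\OO$ has period exactly $q$ — is routine.
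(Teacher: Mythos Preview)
Your argument is correct. The sufficiency construction is essentially the same as the paper's: the paper defines $f$ by sending each partition interval $[x_i,x_{i+1}]$ homeomorphically onto $[x_{\sigma(i)},x_{\sigma(i+1)}]$, while you do the equivalent thing one floor up, prescribing the lift values $F(\hat x_i)=m_i+\hat x_{\sigma(i)}$ and interpolating monotonically. Your alternative of passing from degree $d$ to degree $k$ via a $(k-d)$-winding matches the paper verbatim.

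The necessity arguments, however, are genuinely different. The paper first constructs the piecewise-homeomorphic minimal realization $(f,\OO)$ over the \emph{same} orbit $\OO$ as a given realization $(g,\OO)$, observes that $\deg f=\des(\sigma)$ because $\sigma$ has a descent at $i$ precisely when $0\in f([x_i,x_{i+1}])$, and then notes that $g$ must be a $p_i$-winding of $f$ on each $[x_i,x_{i+1}]$ with $p_i\geq 0$, so $\deg g=\des(\sigma)+\sum p_i\geq\des(\sigma)$. You instead work directly with a lift $F$ of the given realization, encode $F(\hat x_i)$ by the integers $m_i$, and telescope the monotonicity inequalities $m_{i+1}-m_i\geq[\sigma(i)>\sigma(i+1)]$ against the degree bound $m_q-m_1\leq k-\varepsilon$, with the wrap-around term $\varepsilon$ cancelling. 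Your route is more elementary and fully self-contained (it does not invoke the $p$-winding notion from \secref{sec:CMFP}); the paper's route has the advantage that the decomposition $\deg g=\des(\sigma)+\sum p_i$ and the minimal-realization comparison recur throughout the later sections (transition matrices, fixed point distributions), so it pays for itself structurally. One small wording point: your phrase ``with $m_{i+1}\geq m_i+1$ exactly when $\sigma(i)>\sigma(i+1)$'' overstates slightly---monotonicity alone gives only the forward implication---but since you use only that direction, the argument is unaffected.
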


Thus, minimal realizations of $\sigma$ exist and have degree equal to $\des(\sigma)$. 

\begin{proof}
A realization of degree $\des(\sigma)$ can be constructed as follows: Take any set $\OO=\{ x_1, \ldots, x_q \}$ with $0, x_1, \ldots, x_q$ in positive cyclic order, and define $f: \RR/\ZZ \to \RR/\ZZ$ by mapping each interval $[x_i,x_{i+1}]$ homeomorphically onto $[x_{\sigma(i)},x_{\sigma(i+1)}]$. Evidently $f$ is a covering map of the circle and $(f,\OO)$ is a realization of $\sigma$. To see that the degree of $f$ is $\des(\sigma)$, simply note that $\sigma(i)>\sigma(i+1)$ if and only if the image $f([x_i,x_{i+1}])$ contains $0$, so the number of descents of $\sigma$ is equal to the cardinality of $f^{-1}(0)$. \vs

To produce realizations of $\sigma$ of higher degrees, let $(f,\OO)$ be as above and let $g$ be a $p$-winding of $f$ on any of the intervals $[x_i,x_{i+1}]$. Then $(g,\OO)$ is a realization of $\sigma$ of degree $\des(\sigma)+p$. \vs      

Finally, let $(g, \OO)$ be any realization of $\sigma$. Let $\OO=\{ x_1, \ldots, x_q \}$, so $g(x_i)=x_{\sigma(i)}$ for all $i$. Define $f$ by mapping each interval $[x_i,x_{i+1}]$ homeomorphically onto $[x_{\sigma(i)},x_{\sigma(i+1)}]$, so $(f,\OO)$ is a realization of degree $\des(\sigma)$. For each $i$, there is an integer $p_i \geq 0$ such that $g$ is a $p_i$-winding of $f$ on $[x_i,x_{i+1}]$. It follows that the degree of $g$ is $\des(\sigma)+\sum p_i \geq \des(\sigma)$.     
\end{proof}

As seen from the above proof, there is a great deal of freedom in choosing the orbit and the covering map that define a realization of a given cycle. However, the construction is unique in the sense described below. Let us call two realizations $(g_0,\OO_0)$ and $(g_1,\OO_1)$ of $\sigma$ {\bit topologically equivalent} if there are orientation-preserving homeomorphisms $\psi_0,\psi_1: \RR/\ZZ \to \RR/\ZZ$, with $\psi_0(0)=\psi_1(0)=0$ and $\psi_0(\OO_0)=\psi_1(\OO_0)=\OO_1$, such that $\psi_0 \circ g_0 = g_1 \circ \psi_1$. In other words, the following diagram commutes: \vs

\begin{center}
\begin{tikzcd}[column sep=small]
(\RR/\ZZ, \OO_0) \arrow[d,swap,"\psi_1"] \arrow[rr,"g_0"] & & (\RR/\ZZ, \OO_0) \arrow[d,"\psi_0"] \\
(\RR/\ZZ, \OO_1) \arrow[rr,"g_1"] & & (\RR/\ZZ, \OO_1) 
\end{tikzcd} 
\end{center}

\vs
\noindent
Given a realization $(g,\OO)$ of $\sigma$ of degree $k$, take a minimal realization $(f,\OO)$ over the same orbit, so for each $i$ there is a $p_i \geq 0$ such that $g$ is a $p_i$-winding of $f$ on $[x_i,x_{i+1}]$. This associates to $(g,\OO)$ a non-negative integer vector $\bp(g,\OO)=(p_1, \ldots, p_q)$ with $\sum p_i = k-\des(\sigma)$. It is not hard to see that two realizations $(g_0,\OO_0)$ and $(g_1,\OO_1)$ of $\sigma$ are topologically equivalent if and only if $\bp(g_0,\OO_0)=\bp(g_1,\OO_1)$. In particular, when $k=\des(\sigma)$, the vectors $\bp(g_0,\OO_0)$ and $\bp(g_1,\OO_1)$ are both zero and we conclude that {\it any two minimal realizations of $\sigma$ are topologically equivalent}. 

\section{The transition matrix}\label{TMC}

Let us begin with a notational convention. The quotient $\ZZ/ q \ZZ$ on which our $q$-cycles act can be thought of as an additive subgroup of the circle by identifying $i \in \ZZ/ q\ZZ$ with $i/q \in \RR/\ZZ$. This identification allows us to define open, half-open, or closed intervals in $\ZZ/ q\ZZ$ as the intersection of the corresponding interval on the circle with $\ZZ/ q\ZZ$. For example, in $\ZZ/ 5\ZZ$,  
$$
[4,2]=\{ 4,5,1,2 \}, \ [4,2)=\{ 4,5,1 \}, \ (4,2]=\{ 5,1,2 \},  \ (4,2)=\{ 5,1 \}. 
$$

\begin{definition}
The {\bit transition matrix} of $\sigma \in \sC_q$ is the $q \times q$ matrix $A=[ a_{ij} ]$ defined by
$$
a_{ij} = \begin{cases} \ 1 & \quad \text{if} \ j \in [\sigma(i),\sigma(i+1)) \\ \ 0 & \quad \text{otherwise.} \end{cases} 
$$
\end{definition}

The algebraic definition of the transition matrix has a simple dynamical interpretation that justifies the terminology. Let $(f,\OO)$ be a minimal realization of $\sigma$, where $\OO=\{ x_1, \ldots, x_q \}$ and as usual $0, x_1, \ldots, x_q$ are in positive cyclic order. The orbit $\OO$ defines a partition of the circle into the intervals $I_1,\ldots,I_q$, where $I_i=[x_i,x_{i+1}]$. Since $f$ maps each $[x_i,x_{i+1}]$ homeomorphically onto $[x_{\sigma(i)},x_{\sigma(i+1)}]$, we have 
$$
f(I_i)= \bigcup_{j \in [\sigma(i),\sigma(i+1))} I_j \ \qquad \text{for all} \ i.
$$ 
It follows that the entries of the transition matrix $A = [a_{ij}]$ satisfy    
$$
a_{ij} = \begin{cases} \ 1 & \quad \text{if} \ f(I_i) \supset I_j \\ \ 0 & \quad \text{otherwise.} \end{cases} 
$$
Since $f$ is a covering map of degree $d=\des(\sigma)$, {\it every column of the transition matrix $A$ contains exactly $d$ entries of $1$}. Along the last column of $A$ these $d$ entries occur precisely at the descents of $\sigma$, that is, $a_{iq}=1$ if and only if $\sigma$ has a descent at $i$. The column stochastic matrix $(1/d)A$ describes a Markov chain with states $I_1, \ldots, I_q$, with the probability of going from $I_j$ to $I_i$ equal to $1/d$ if $I_j \subset f(I_i)$ and equal to $0$ otherwise (compare \exref{C5} and \figref{tmc}).   

\begin{lemma}\label{des}
There are precisely $d-1$ entries of $1$ along the main diagonal of $A$. 
\end{lemma}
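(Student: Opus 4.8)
The plan is to count the diagonal entries $a_{ii}=1$ by unwinding the definition and relating them to the descents of a rotated conjugate of $\sigma$. Recall $a_{ii}=1$ exactly when $i \in [\sigma(i),\sigma(i+1))$, i.e. when the arc running counterclockwise from $\sigma(i)$ up to (but not including) $\sigma(i+1)$ contains $i$. Using the dynamical picture from the discussion preceding the lemma, $a_{ii}=1$ means the partition interval $I_i=[x_i,x_{i+1}]$ maps over itself under a minimal realization $(f,\OO)$. I would first translate the condition $i\in[\sigma(i),\sigma(i+1))$ into a statement about how many of the $d$ preimages of $0$ lie in a given location, and more usefully, compare it with the last column of $A$, where $a_{jq}=1$ precisely at the descents of $\sigma$ (as already noted in the text, there are exactly $d$ such $j$).

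The cleanest route is to conjugate by a rotation so the diagonal in question becomes the last column. For each fixed $i$, consider $\tau_i := \rho^{-i}\sigma\rho^{i}\in\sC_q$ (or the appropriate power so that the index $i$ is sent to $q$); by \lemref{degp}, $\des(\tau_i)=\des(\sigma)=d$, and the transition matrix of $\tau_i$ is obtained from $A$ by the corresponding simultaneous cyclic permutation of rows and columns, so the $(i,i)$ entry of $A$ becomes a last-column entry of the transition matrix of $\tau_i$. Thus $a_{ii}=1$ iff $\tau_i$ has a descent at the index corresponding to $q$. Summing over $i$, $\sum_i a_{ii}$ counts, over all $q$ cyclic rotations of $\sigma$, how many place a descent in one distinguished slot. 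Since each rotation $\rho^{-i}\sigma\rho^i$ has exactly $d$ descents and the total number of (rotation, slot) descent-incidences is $qd$ distributed over $q$ slots, one has to show this particular count equals $d-1$ rather than $d$; equivalently, exactly one of the $q$ rotations fails to have a descent in that slot.

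I would therefore phrase the heart of the argument as: among the $q$ conjugates $\rho^{-i}\sigma\rho^{i}$, exactly one has an \emph{ascent} at the last position. This is the analogue of the computation in \exref{rc=1} and in the theorem bounding $\des$: an ascent at position $q$ of $\nu$ means $\nu(q)<\nu(1)$, i.e. $x_{\sigma^{\text{(rotated)}}(\,\cdot\,)}$ wraps; dynamically, $a_{ii}=0$ exactly when the arc from $\sigma(i)$ to $\sigma(i+1)$ (counterclockwise) does \emph{not} cross index $i$, and there is a unique index $i$ where the orbit "wraps past $0$ but not past $x_i$" — this is precisely the unique interval containing the point $x_1$-to-$x_q$ gap that carries the extra winding of a degree $d$ map versus a degree $d$ orbit that "uses up" $d-1$ self-overlaps. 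Concretely: the $d$ entries of $1$ in the last column of $A$ sit at the $d$ descents of $\sigma$; the diagonal entries correspond instead to self-overlap, and self-overlap at $I_i$ fails exactly at the one interval $I_{i_0}$ which contains the basepoint-image discrepancy, namely the $I_i$ with $x_i < f^{-1}$-nothing $\dots$ — I would pin this down by the identity $\sum_{j}a_{jq} = d$ together with a bijection-with-defect-one between diagonal $1$'s and last-column $1$'s.

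The main obstacle is making that "defect of exactly one" rigorous: showing $\sum_i a_{ii} = \sum_j a_{jq} - 1 = d-1$ rather than off by some other amount. I expect the honest argument is a direct counting one: let $S=\{i: i\in[\sigma(i),\sigma(i+1))\}$ and note $i\in S$ iff, writing the arc $[\sigma(i),\sigma(i+1))$ and the arc $[i,?)$, the two "cut points" interleave in a prescribed way; then a telescoping/parity computation around the circle — tracking the function $i\mapsto (\text{number of }j\le i\text{ with }j\in[\sigma(i'),\sigma(i'+1)))$ as $i$ and the "target interval" both advance — forces $|S|=d-1$. Alternatively, and probably more elegantly, I would invoke \lemref{fpcount}: the interval $I_i$ maps over itself iff $f$ has a fixed point in the interior of $I_i$ (for the piecewise-affine minimal $f$ this is an iff), the fixed points of the degree $d$ map $f$ number exactly $d-1$ by \lemref{fpcount} since they are topologically repelling for the piecewise-linear model, and distinct intervals $I_i$ contain distinct fixed points; hence $|S| = \#\{\text{fixed points of }f\} = d-1$. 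That last line is the cleanest proof and I would lead with it, using the combinatorial description only as a cross-check.
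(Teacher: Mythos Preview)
Your final paragraph is the paper's proof: build the piecewise-affine minimal realization, invoke \lemref{fpcount} to get exactly $d-1$ fixed points, and note that each partition interval contains at most one fixed point, with $I_i$ containing one iff $f(I_i)\supset I_i$, i.e.\ iff $a_{ii}=1$. The one point you assert but do not justify is that every fixed point of the piecewise-linear model is topologically repelling; this needs the observation that the slope $\alpha_i$ on $I_i$ is a positive integer, and $\alpha_i=1$ would force $f|_{I_i}$ to be the identity (impossible since the endpoints are periodic of period $q\geq 2$), hence $\alpha_i\geq 2$ --- the paper makes exactly this argument.

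Your combinatorial detour via conjugates is correctly set up (indeed $a_{ii}=1$ iff $\rho^{-i}\sigma\rho^{i}$ has a descent at $q$), but as you yourself note, you do not close the ``defect of exactly one'' step, and the double-counting heuristic ($qd$ descent-incidences over $q$ slots) gives an average of $d$, not $d-1$, so it cannot by itself yield the answer. Drop that route and lead with the fixed-point argument.
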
 

\begin{proof}
Construct a special minimal realization of $\sigma$ by choosing the orbit $\OO=\{ x_1, \ldots, x_q \}$ equidistributed over the circle so each interval $I_i=[x_i,x_{i+1}]$ has length $1/q$, and then letting $f$ map each $I_i$ {\it affinely} onto the union $\bigcup_{j \in [\sigma(i),\sigma(i+1))} I_j$. Thus, $f$ has constant derivative $\alpha_i$ in the interior of $I_i$, where $\alpha_i$ is the number of $1$'s along the $i$-th row of $A$. In particular, $\alpha_1, \ldots, \alpha_q$ are positive integers with $\sum_{i=1}^q \alpha_i=d q$. \vs

Let $u$ be a fixed point of $f$ and $I_i$ be the unique partition interval that contains $u$. Since the orbit points $x_i$ are not fixed, $u$ belongs to the interior of $I_i$, so the derivative $f'(u)=\alpha_i$ is defined and satisfies $f'(u)\geq 1$. If $f'(u)=1$, the entire interval $I_i$ would have to be fixed under $f$, which is impossible since its endpoints are not fixed. Thus $f'(u) \geq 2$ and $u$ is topologically repelling. It follows from \lemref{fpcount} that $f$ has precisely $d-1$ fixed points. \vs

To complete the proof, simply note that the piecewise affine map $f$ has at most one fixed point in each interval $I_i$, and that $I_i$ contains a fixed point if and only if $f(I_i) \supset I_i$.  
\end{proof}

\begin{definition}
Let $A=[a_{ij}]$ be the transition matrix of $\sigma \in \sC_q$ with $\des(\sigma)=d$. The {\bit signature} of $\sigma$ is the integer vector formed by the main diagonal entries of $A$:  
$$
\sig(\sigma)=(a_{11},\ldots,a_{qq}).
$$
By \lemref{des}, $d-1$ components of $\sig(\sigma)$ are $1$ and the rest are $0$. If $(f,\OO)$ is any realization of $\sigma$ (minimal or not), and if $I_1,\ldots,I_q$ are the corresponding partition intervals, then $I_i$ is called a {\bit marked interval} if $a_{ii}=1$. Notice that when $d=1$, $\sig(\sigma)$ is the zero vector and there are no marked intervals.   
\end{definition}

For simplicity we denote the components of the signature by $a_i$: $\sig(\sigma)=(a_1, \ldots, a_q)$. Unlike the descent number or symmetry order, the signature is not well-defined for a combinatorial type. In fact, 
\begin{equation}\label{sigcon}
\sig(\sigma)=(a_1, a_2, \ldots, a_q) \quad \Longrightarrow \quad \sig(\rho^{-1} \sigma \rho)=(a_2, \ldots, a_q, a_1). 
\end{equation}
If $\sigma$ has symmetry order $s$, then $\rho^{-r} \sigma \rho^r=\sigma$ where $r=q/s$, hence $\sig(\sigma)$ is $r$-periodic in the sense that 
\begin{equation}\label{r-per}
a_i=a_{i+r} \qquad \text{for all} \  i. 
\end{equation}

\begin{example}\label{C5}
Consider the $5$-cycle $\sigma=(1 \ 2 \ 4 \ 5 \ 3)$ with $\des(\sigma)=3$. A minimal realization $(f,\OO)$ induces the following action on the partition intervals: 
\begin{align*}
f(I_1) & = I_2 \cup I_3 \\
f(I_2) & = I_4 \cup I_5 \\
f(I_3) & = I_1 \cup I_2 \cup I_3 \cup I_4 \\
f(I_4) & = I_5 \cup I_1 \cup I_2 \\
f(I_5) & = I_3 \cup I_4 \cup I_5 \cup I_1.  
\end{align*} 
This gives the transition matrix 
$$
A= \begin{bmatrix}
0 & 1 & 1 & 0 & 0 \\
0 & 0 & 0 & 1 & 1 \\
1 & 1 & 1 & 1 & 0 \\
1 & 1 & 0 & 0 & 1 \\
1 & 0 & 1 & 1 & 1 \\
\end{bmatrix}
$$
with two entries of $1$ along the main diagonal, as claimed by \lemref{des}. It follows that $\sig(\sigma)=(0,0,1,0,1)$ and the marked intervals are $I_3,I_5$. The corresponding Markov chain is shown in \figref{tmc}. 
\end{example}

%%%%%%%%%%%%%%%%%%%%%%%%%%%%%%%%%%%%%%%%%%%
\begin{figure}[t]
\begin{overpic}[width=0.8\textwidth]{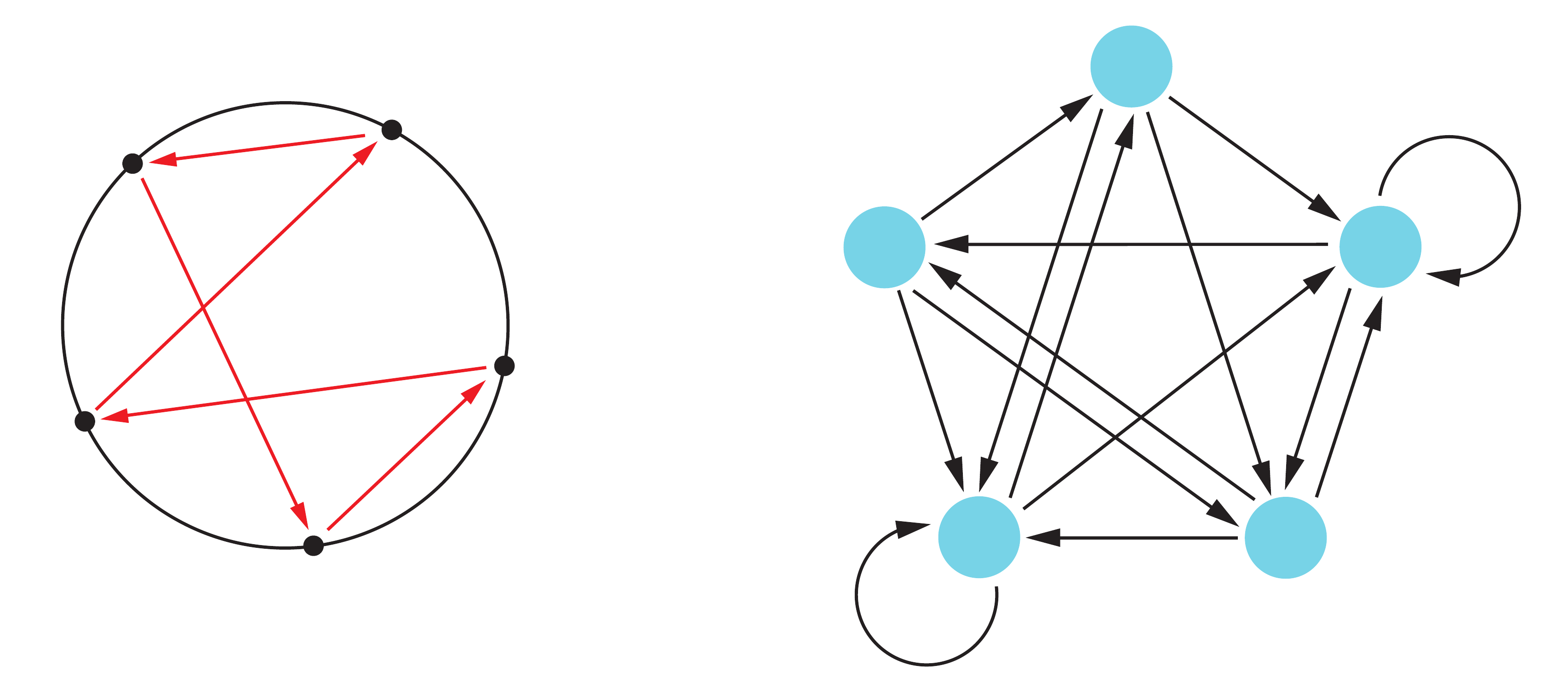}
\put (16,38.5) {\small $I_1$}
\put (1.2,25.5) {\small $I_2$}
\put (9.5,8) {\small $I_3$}
\put (28,10) {\small $I_4$}
\put (31.5,29.5) {\small $I_5$}
\put (24.5,37) {\small $x_1$}
\put (4,33.5) {\small $x_2$}
\put (2,15) {\small $x_3$}
\put (19,6) {\small $x_4$}
\put (33.5,20) {\small $x_5$}
\put (71,38.5) {\small $I_1$}
\put (55,27) {\small $I_2$}
\put (61,8.5) {\small $I_3$}
\put (80.5,8.5) {\small $I_4$}
\put (86.5,27) {\small $I_5$}
\end{overpic}
\caption{\sl Left: A realization of the $5$-cycle $\sigma=(1 \ 2 \ 4 \ 5 \ 3)$ with the marked intervals $I_3$ and $I_5$. Right: The Markov chain associated with $\sigma$ in which every transition has probability $1/\des(\sigma)=1/3$.}  
\label{tmc}
\end{figure}
%%%%%%%%%%%%%%%%%%%%%%%%%%%%%%%%%%%%%%%%%%%

Let $A$ be the transition matrix of $\sigma \in \sC_q$ and $(f,\OO)$ be a minimal realization of $\sigma$ with the partition intervals $I_1, \ldots, I_q$ as above. A straightforward induction shows that the $ij$-entry $a^{(n)}_{ij}$ of the power $A^n$ is the number of times the $n$-th iterated image $f^{\circ n}(I_i)$ covers $I_j$ or, equivalently, the number of connected components of $f^{-n}(I_j)$ in $I_i$. Since $\sigma$ is a $q$-cycle, for every pair $i,j$ there is an integer $1 \leq n \leq q$ such that $f^{\circ n}(x_i)=x_j$. It follows that $I_j \subset f^{\circ n}(I_i)$ and $a^{(n)}_{ij}>0$. This shows that the Markov chain associated with $\sigma$ is {\it transitive}: One can go from any state to any state in finitely many steps.\footnote{This property is also known as {\it irreducibility}; see \cite{F}.} \vs

This statement can be made sharper as follows. First assume $\des(\sigma)=1$, so $\sigma=\rho^p$ for some $p$ relatively prime to $q$ (\exref{rc=1}). In this case $a_{ij}=1$ if and only if $j=i+p \ (\operatorname{mod} \ q)$, and it follows by induction that
$$
a_{ij}^{(n)} = \begin{cases} \ 1 & \qquad \text{if} \ j=i+n p \ (\operatorname{mod} \ q) \\ \ 0 & \qquad \text{otherwise}. \end{cases}
$$
Setting $n=q$ then gives $A^q=\id$. The powers of $A$ are thus obtained by simultaneously shifting the rows of the identity matrix. \vs

The situation when $\des(\sigma) \geq 2$ is very different, as the entries of $A^n$ tend to grow rapidly with $n$. 
 
\begin{lemma}\label{regularity}
Let $A$ be the transition matrix of $\sigma \in \sC_q$ with $\des(\sigma)=d \geq 2$. Then the power $A^q$ has positive entries.   
\end{lemma}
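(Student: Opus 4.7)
My plan is to work inside the equidistributed piecewise-affine minimal realization $(f,\OO)$ of $\sigma$ introduced in the proof of \lemref{des}. There, on each partition interval $I_j$ the map $f$ has constant derivative $\alpha_j$ (the $j$-th row sum of $A$), with $\alpha_j\ge 1$ and $\sum_j \alpha_j=dq$. Since $(A^q)_{ij}$ records how many times the arc $f^{\circ q}(I_i)$ covers $I_j$, the statement $A^q>0$ is equivalent to $f^{\circ q}(I_i)=\RR/\ZZ$ for every $i$.

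Fix $i$, choose lifts $\tilde x_1<\cdots<\tilde x_q<\tilde x_1+1$ and a lift $\tilde f^{\circ q}:\RR\to\RR$. Because $f^{\circ q}$ fixes $x_i$, there is a unique integer $m_i$ with $\tilde f^{\circ q}(\tilde x_i)=\tilde x_i+m_i$, and the lifted length of the image arc equals $L_i+(m_{i+1}-m_i)$, where $L_i=\tilde x_{i+1}-\tilde x_i\in(0,1)$. The arc covers the whole circle precisely when this length is $\ge 1$, and since $m_{i+1}-m_i\in\ZZ$ while $1-L_i\in(0,1)$, this reduces to $m_{i+1}-m_i\ge 1$. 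Writing the lifted length as $\int_{I_i}(f^{\circ q})'(t)\,dt$ and using the chain-rule identity $(f^{\circ q})'(t)=\prod_{s=0}^{q-1}\alpha_{i_s(t)}\ge 1$, where $i_s(t)$ is the partition interval containing $f^{\circ s}(t)$, one already gets $m_{i+1}-m_i\ge 0$ for free. The heart of the matter is to rule out equality.

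Suppose $m_{i+1}-m_i=0$. Then $(f^{\circ q})'(t)=1$ for almost every $t\in I_i$, which forces $\alpha_{i_s(t)}=1$ for each $s$; that is, the length-$q$ itinerary of almost every $t\in I_i$ stays inside the set $S:=\{j:\alpha_j=1\}$. But a vertex $v\in S$ has $\alpha_v=1$, so its only available out-neighbor is $\sigma(v)$, and the itinerary is rigidly forced to be the $\sigma$-orbit $i,\sigma(i),\sigma^2(i),\ldots,\sigma^{q-1}(i)$, which visits every vertex because $\sigma$ is a $q$-cycle. Hence $S=\{1,\ldots,q\}$, giving $\sum_j\alpha_j=q$ against the identity $\sum_j\alpha_j=dq\ge 2q$. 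Therefore $m_{i+1}-m_i\ge 1$ and $f^{\circ q}(I_i)=\RR/\ZZ$, proving $A^q>0$.

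The only delicate point I expect is keeping the distinction between pointwise and a.e.\ behavior of $(f^{\circ q})'$ clean, since the derivative is undefined on the measure-zero set of $t\in I_i$ whose iterates land on a partition endpoint; this is routine because the integral depends only on a.e.\ values of the integrand.
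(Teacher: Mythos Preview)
Your proof is correct and shares the paper's core idea: to show $f^{\circ q}(I_i)=\RR/\ZZ$, argue by contradiction that if it fails then the forward orbit of $I_i$ under $f$ must be exactly $I_i\mapsto I_{\sigma(i)}\mapsto\cdots\mapsto I_{\sigma^{q-1}(i)}\mapsto I_i$, forcing every row sum to equal $1$ and hence $d=1$.

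The difference is one of packaging. The paper works with an arbitrary minimal realization and uses the $p$-winding language: $f^{\circ q}$ is a $p$-winding of the identity on $I_i$, and $p=0$ means $f^{\circ q}:I_i\to I_i$ is a homeomorphism, from which the single-interval orbit is read off topologically. You instead fix the equidistributed piecewise-affine model from \lemref{des} and run the same argument analytically: the condition $m_{i+1}-m_i=0$ becomes $(f^{\circ q})'=1$ a.e., hence every $\alpha_{i_s(t)}=1$, which pins down the itinerary as the $\sigma$-orbit. Your route makes the ``each step is a single interval'' deduction completely explicit via the out-degree $\alpha_v=1$ observation, at the cost of committing to a particular model and handling the measure-zero set where the derivative is undefined (which, as you note, is harmless). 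The paper's version is shorter and model-free but leaves that deduction to the reader.
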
 

This means that the Markov chain associated with $\sigma$ is {\it regular} in the sense that one can go from any state to any state in exactly $q$ steps.

\begin{proof}
Consider a minimal realization $(f,\OO)$ of $\sigma$ and the corresponding partition intervals $I_1, \ldots , I_q$. We need to show that $f^{\circ q}(I_i)=\RR/\ZZ$ for every $i$. Since the iterate $f^{\circ q}$ fixes the endpoints of $I_i$, it must be a $p$-winding of the identity map on $I_i$ for some $p \geq 0$. If $p=0$, then $f^{\circ q}: I_i \to I_i$ is a homeomorphism, which implies that the orbit of $I_i$ under $f$ is $I_i \mapsto I_{\sigma(i)} \mapsto I_{\sigma^2(i)} \mapsto \cdots \mapsto I_{\sigma^{q-1}(i)} \mapsto I_i$. This gives $d=1$, contradicting our assumption. Thus $p \geq 1$ and we conclude that $f^{\circ q}(I_i)=\RR/\ZZ$.  
\end{proof}

\begin{question}
Does \lemref{regularity} hold if we replace $A^q$ with $A^{q-d+1}$?  
\end{question}

The importance of the above lemma is due to the following classical result of Perron and Frobenius: 

\begin{theorem}\label{PF0}
Let $S$ be a $q \times q$ column stochastic matrix with the property that some power of $S$ has positive entries. Then 
\begin{enumerate}
\item[(i)]
$S$ has a simple eigenvalue at $\lambda=1$ and the remaining eigenvalues in the open unit disk $\{ \lambda: |\lambda|<1 \}$. \vs
\item[(ii)]
The eigenspace corresponding to $\lambda=1$ is one-dimensional and generated by a unique probability vector $\bl=(\ell_1, \ldots, \ell_q)$ with $\ell_i>0$ for all $i$. \vs
\item[(iii)]
The power $S^n$ converges to the matrix with identical columns $\bl$ as $n \to \infty$.
\end{enumerate} 
\end{theorem}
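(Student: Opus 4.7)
The plan is to reduce the statement to the classical Perron--Frobenius theorem for strictly positive matrices applied to $S^m$, where $m$ is chosen so that $S^m$ has all entries positive. First I would record two elementary facts. Since the row vector $(1,\ldots,1)$ satisfies $(1,\ldots,1)\, S = (1,\ldots,1)$ by column stochasticity, $\lambda=1$ is already an eigenvalue of $S$ (hence of $S^m$). Moreover, for any eigenvector $v$ of $S$ with eigenvalue $\lambda$, the contraction $\|Sv\|_1 \leq \|v\|_1$ (which holds for any column stochastic $S$) gives $|\lambda| \leq 1$, so the spectral radius of $S$ equals $1$.

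Next I would invoke Perron--Frobenius in its strong form: for a strictly positive square matrix, the spectral radius is an algebraically simple eigenvalue, strictly larger in modulus than any other eigenvalue, and possesses a strictly positive eigenvector that is unique up to scaling. Applying this to $S^m$ (which is again column stochastic, hence of spectral radius $1$) produces a positive probability vector $\bl$ with $S^m \bl = \bl$ such that $1$ is an algebraically simple eigenvalue of $S^m$ and all other eigenvalues of $S^m$ lie in the open unit disk. To transfer this to $S$, note that $S$ commutes with $S^m$ and therefore preserves the one-dimensional $1$-eigenspace of $S^m$; hence $S\bl = c\,\bl$ for some scalar $c$, and applying the left eigenvector $(1,\ldots,1)$ forces $c=1$. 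Now the eigenvalues of $S^m$ are the $m$-th powers of those of $S$, counted with algebraic multiplicity, so the algebraic multiplicity of $1$ as an eigenvalue of $S^m$ equals the total algebraic multiplicity in $S$ of all $m$-th roots of unity. Since this total is $1$ and $\lambda=1$ itself is already an eigenvalue of $S$, we conclude that $\lambda=1$ is the unique $m$-th root of unity in the spectrum of $S$, it has algebraic multiplicity $1$, and every other eigenvalue $\lambda$ of $S$ satisfies $|\lambda|^m < 1$, hence $|\lambda|<1$. This establishes (i) and (ii).

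For (iii) I would pass to the Jordan normal form $S = Q J Q^{-1}$, in which $J$ decomposes as a $1\times 1$ block with entry $1$ together with Jordan blocks whose eigenvalues lie strictly inside the unit disk. Standard estimates give $J^n \to \operatorname{diag}(1,0,\ldots,0)$ as $n\to\infty$, hence $S^n$ converges to a rank-one matrix $\bl\, \bw^T$, where $\bw^T$ is the left $1$-eigenvector of $S$ normalized by $\bw^T \bl = 1$. Since $(1,\ldots,1)$ is such a left eigenvector and $\sum \ell_i = 1$, we may take $\bw = (1,\ldots,1)^T$, and the limit is exactly the matrix whose every column equals $\bl$.

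The main subtle point is the transfer of algebraic simplicity of $\lambda=1$ from $S^m$ back to $S$: one has to rule out both the presence of non-trivial $m$-th roots of unity in the spectrum of $S$ and any Jordan block of size $>1$ at $\lambda=1$. Both are handled by the accounting of algebraic multiplicities under $\lambda \mapsto \lambda^m$ sketched above, together with the a priori knowledge that $1$ is an eigenvalue of $S$. Everything else is an application of the classical Perron--Frobenius theorem and elementary linear algebra, and could reasonably be cited rather than reproved in detail.
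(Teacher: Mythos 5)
Your proposal is correct. Note that the paper does not actually prove this statement: it is quoted as the classical Perron--Frobenius/regular-Markov-chain theorem and dispatched with a citation to Gantmacher, Feller, and Grinstead--Snell. So any honest proof you supply is ``different from the paper's'' by default. Your route --- apply the strong Perron--Frobenius theorem to the strictly positive power $S^m$, then transfer the conclusions back to $S$ --- is the standard one, and you have correctly identified and handled the only delicate step, namely the bookkeeping of algebraic multiplicities under $\lambda\mapsto\lambda^m$: since the $m$-th roots of unity in the spectrum of $S$ contribute total algebraic multiplicity $1$ to the eigenvalue $1$ of $S^m$, and $\lambda=1$ is already known to occur (via the left eigenvector $(1,\ldots,1)$), it follows at once that $1$ is algebraically simple for $S$, that no other eigenvalue of modulus $1$ survives, and that no nontrivial Jordan block at $1$ can exist. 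The identification of the limit in (iii) as the matrix with identical columns $\bl$, via the normalization $\bw^{T}\bl=1$ with $\bw=(1,\ldots,1)^{T}$, is also right. The only stylistic remark is that for the purposes of this paper you could, exactly as the authors do, simply cite the result; but as a self-contained derivation from the positive-matrix case your argument is complete and sound.
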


See for example \cite{Ga}, \cite{F} or \cite{GS}. \vs

The following corollary will be used in the next section:

\begin{theorem}\label{PF}
Let $A$ be the transition matrix of $\sigma \in \sC_q$ with $\des(\sigma)=d \geq 2$. Then, there is a unique probability vector $\bl \in \RR^q$ such that $A \bl = d \bl$. Moreover, $\bl$ has positive components and satisfies 
\begin{equation}\label{ELL}
\bl = \lim_{n \to \infty} \frac{1}{d^n} \, A^n \bv
\end{equation}
for every probability vector $\bv \in \RR^q$.
\end{theorem}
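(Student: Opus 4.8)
The plan is to deduce \thmref{PF} directly from the Perron--Frobenius statement in \thmref{PF0} by passing from the transition matrix $A$ to the rescaled matrix $S=(1/d)A$. First I would observe that since every column of $A$ contains exactly $d$ entries equal to $1$ (as noted after the definition of the transition matrix, because a minimal realization is a covering map of degree $d$), the matrix $S=(1/d)A$ is column stochastic: each of its columns consists of nonnegative entries summing to $1$. Next, since $\des(\sigma)=d\geq 2$, \lemref{regularity} tells us that $A^q$ has strictly positive entries, and hence $S^q=(1/d^q)A^q$ also has strictly positive entries. Thus $S$ satisfies the hypothesis of \thmref{PF0}.

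Applying \thmref{PF0} to $S$ then gives everything we need. Part (ii) provides a unique probability vector $\bl=(\ell_1,\ldots,\ell_q)$ with all $\ell_i>0$ generating the one-dimensional $\lambda=1$ eigenspace of $S$, i.e., $S\bl=\bl$, which rewrites as $A\bl=d\bl$. Uniqueness of such a probability eigenvector of $A$ for the eigenvalue $d$ is immediate because $A\bv=d\bv$ is equivalent to $S\bv=\bv$, and \thmref{PF0}(ii) says the $\lambda=1$ eigenspace of $S$ is spanned by $\bl$, so any probability vector in it must equal $\bl$. For the limit formula \eqref{ELL}, part (iii) of \thmref{PF0} says $S^n$ converges to the matrix all of whose columns equal $\bl$; hence for any probability vector $\bv\in\RR^q$ we get
$$
\lim_{n\to\infty}\frac{1}{d^n}A^n\bv=\lim_{n\to\infty}S^n\bv=\Big(\sum_{i=1}^q v_i\Big)\bl=\bl,
$$
since the entries of $\bv$ sum to $1$. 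This is exactly \eqref{ELL}.

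There is essentially no serious obstacle here: the proof is a bookkeeping reduction, and the only points requiring a word of care are (a) confirming that $S=(1/d)A$ is genuinely column stochastic, which rests on the fact already established that each column of $A$ has exactly $d$ ones, and (b) translating between the eigenvalue $\lambda=1$ of $S$ and the eigenvalue $d$ of $A$, together with noting that multiplying a limit of stochastic matrices against a probability vector picks out the common column $\bl$. If one wants to be fully self-contained, one should also remark that \thmref{PF0} is being invoked for the specific power $S^q$ having positive entries, which certainly implies that \emph{some} power of $S$ has positive entries as the theorem requires.
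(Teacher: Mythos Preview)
Your proof is correct and follows exactly the same approach as the paper: apply \thmref{PF0} to the column stochastic matrix $S=(1/d)A$, using \lemref{regularity} to verify that some power of $S$ has positive entries. The paper's own proof is a one-line version of what you wrote.
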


\begin{proof}
This follows from \thmref{PF0} since by \lemref{regularity}, some power of the column stochastic matrix $S=(1/d)A$ has positive entries. 
\end{proof}

As we have seen, the transition matrix of $\sigma$ encodes the dynamics of partition intervals under any minimal realization. It will be useful for our purposes to also consider transition matrices that encode higher degree realizations. Let $A$ be the transition matrix of $\sigma \in \sC_q$ with $\des(\sigma)=d \geq 1$. Let $\bp=(p_1,\ldots,p_q) \neq {\bf 0}$ be a non-negative integer vector. Consider the $q \times q$ matrix $P$ with identical columns $\bp$, so the $i$-th row of $P$ is $(p_i,\ldots,p_i)$. The sum
$$
B=A+P
$$
is called the {\bit transition matrix of the pair $(\sigma, \bp)$}. Setting 
$$
k = d + \sum_{i=1}^q p_i > d, 
$$
we see that the sum of the entries in every column of $B$ is $k$, so $(1/k) B$ is a column stochastic matrix. \vs

The matrix $B$ encodes the dynamics of partition intervals under a degree $k$ realization $(g,\OO)$ of $\sigma$ which is determined uniquely up to topological equivalence (compare the discussion at the end of \S\ref{realcyc}). To see this, take a minimal realization $(f,\OO)$ of $\sigma$ with the partition intervals $I_1,\ldots,I_q$ and let $g: \RR/\ZZ \to \RR/\ZZ$ be any covering map which is a $p_i$-winding of $f$ on $I_i$ for every $i$. Evidently $(g,\OO)$ is a degree $k$ realization of $\sigma$ with $\bp(g,\OO)=\bp$. Moreover, the image $g(I_i)$ covers the intervals in $\bigcup_{j \in [\sigma(i),\sigma(i+1))} I_j$ precisely $p_i+1$ times and it covers the remaining partition intervals $p_i$ times. Thus, the $ij$-entry of $B$ records how many times $I_j$ is covered by $g(I_i)$. \vs

Since the entries of $P$ are non-negative, the powers of $B$ grow at least as fast as those of $A$, that is, $B^n \geq A^n$ for all $n$ entry-wise. Thus, if $d \geq 2$, \lemref{regularity} shows that $B^q$ has positive entries. This is also true if $d=1$ but needs justification. Take a degree $k$ realization $(g,\OO)$ encoded by $B$ as above. Since the $ij$-entry of $B^n$ is the number of times $g^{\circ n}(I_i)$ covers $I_j$, it suffices to show that $g^{\circ q}(I_i)=\RR/\ZZ$ for every $i$. This is trivial if $p_i>0$ since in this case $g(I_i)$ already covers the circle. Otherwise, take the smallest integer $1 \leq j \leq q-1$ such that $f^{\circ j}(I_i)$ is an interval $I_n$ with $p_n>0$ and note that $g^{j+1}(I_i) \supset g(f^{\circ j}(I_i)) = g(I_n) = \RR/\ZZ$. \vs

Applying \thmref{PF0} to the column stochastic matrix $S=(1/k)B$, we arrive at the following analog of \thmref{PF}:   

\begin{theorem}\label{PF1}
Let $B$ be the transition matrix of the pair $(\sigma,\bp)$, where $\sigma \in \sC_q$ and $\bp=(p_1,\ldots,p_q) \neq {\bf 0}$ is a non-negative integer vector. Set $k=\des(\sigma)+\sum p_i$. Then, there is a unique probability vector $\bl \in \RR^q$ such that $B \bl = k \bl$. Moreover, $\bl$ has positive components and satisfies
$$
\bl = \lim_{n \to \infty} \frac{1}{k^n} \, B^n \bv
$$
for every probability vector $\bv \in \RR^q$.
\end{theorem}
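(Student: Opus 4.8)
The plan is to obtain \thmref{PF1} as a direct corollary of \thmref{PF0}, in exactly the way \thmref{PF} was obtained from it in the case $\bp={\bf 0}$. Since the hypothesis $\bp \neq {\bf 0}$ forces $k=\des(\sigma)+\sum p_i \geq \des(\sigma)+1 \geq 2$, the matrix $S=(1/k)B$ is column stochastic, and the only point requiring verification before invoking \thmref{PF0} is that some power of $S$ — equivalently some power of $B$ — has strictly positive entries.

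That verification is essentially already in place: as noted in the paragraph preceding the statement, $B^n \geq A^n$ entrywise, so when $\des(\sigma) \geq 2$ the positivity of $B^q$ follows from \lemref{regularity}, while when $\des(\sigma)=1$ it follows from the realization argument showing $g^{\circ q}(I_i)=\RR/\ZZ$ for every $i$, where $(g,\OO)$ is the degree $k$ realization of $\sigma$ encoded by $B$. If one prefers a uniform treatment, one can simply rerun the proof of \lemref{regularity} with $g$ in place of $f$: the iterate $g^{\circ q}$ fixes the endpoints of $I_i$, hence is a $p$-winding of the identity on $I_i$ for some $p \geq 0$; the case $p=0$ would make the $g$-orbit of $I_i$ a cyclic permutation of $q$ intervals and force $\deg g = 1$, contradicting $k \geq 2$; so $p \geq 1$ and $g^{\circ q}(I_i)$ is the whole circle, i.e. every entry of $B^q$ is positive.

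Granting $S^q=(1/k^q)B^q>0$, I would then apply \thmref{PF0} to $S$: part (ii) produces a unique probability vector $\bl=(\ell_1,\ldots,\ell_q)$ with all $\ell_i>0$ satisfying $S\bl=\bl$, i.e. $B\bl=k\bl$, and uniqueness of the probability eigenvector of $S$ at eigenvalue $1$ translates into uniqueness of the probability solution of $B\bl=k\bl$. For the limit formula, fix any probability vector $\bv$; part (iii) says $S^n$ converges to the matrix all of whose columns equal $\bl$, so $S^n\bv \to (\sum_i v_i)\,\bl = \bl$ because the components of $\bv$ sum to $1$; rewriting $S^n\bv=(1/k^n)B^n\bv$ gives the displayed identity. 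There is no genuine obstacle beyond the positivity of $B^q$, which has been dispatched above; the rest is bookkeeping to unwind the normalization $S=(1/k)B$.
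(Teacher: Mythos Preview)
Your proposal is correct and follows exactly the paper's approach: the paper derives \thmref{PF1} by applying \thmref{PF0} to $S=(1/k)B$, with the required positivity of $B^q$ supplied by the two paragraphs immediately preceding the statement (the case $d\geq 2$ via $B^n\geq A^n$ and \lemref{regularity}, the case $d=1$ via the realization $(g,\OO)$). Your optional ``uniform'' rerun of \lemref{regularity} with $g$ in place of $f$ also works, though the assertion that $p=0$ forces the $g$-orbit of $I_i$ to be a cyclic permutation of the partition intervals deserves one more line: one must check that the arcs $g^j(I_i)$ are pairwise interior-disjoint (if some $g^a(I_i)$ met $I_i$ it would have to contain $I_i$, and iterating $g^a$ yields a strictly increasing chain returning to $I_i$, a contradiction), after which the tiling and degree-$1$ conclusion follow.
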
  
   
\section{Realizations under $\mk$: The minimal case}\label{sec:min}

Let $\mk: \RR/\ZZ \to \RR/\ZZ$ denote the multiplication by $k$ map of the circle defined by  
$$
\mk(x) = k x \quad (\operatorname{mod} \ \ZZ).
$$
The central question of this work is whether a given combinatorial type $\tau$ in $\sC_q$ has a realization under $\mk$ and if it does, how many such realizations there are. By \thmref{lbound}, a realization of $\tau$ under $\mk$ can exist only if $k \geq \des(\tau)$. This section treats the minimal case where $k=\des(\tau) \geq 2$. The next section will address the more general case where $k>\des(\tau)\geq 1$. 

\begin{theorem}\label{main1}
A cycle $\sigma \in \sC_q$ with $\des(\sigma)=d\geq 2$ and $\sig(\sigma)=(a_1, \ldots, a_q)$ has a realization under $\md$ if and only if $a_q=1$. In this case, the realization is unique. 
\end{theorem}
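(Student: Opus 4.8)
The plan is to prove that any realization of $\sigma$ under $\md$ is completely forced by $\sigma$ --- which yields both the uniqueness clause and the necessity of $a_q=1$ --- and then, when $a_q=1$, to exhibit that forced candidate as an honest realization. First I would record what a realization $(\md,\OO)$ with $\OO=\{x_1,\dots,x_q\}$ must look like. Since $\des(\sigma)=d=\deg\md$, it is a minimal realization, so the partition intervals $I_i=[x_i,x_{i+1}]$ satisfy $\md(I_i)=\bigcup_{j\in[\sigma(i),\sigma(i+1))}I_j$ and carry the transition matrix $A$; and because $0\notin\OO$, the point $0$ lies in the interior of a unique partition interval, namely $I_q=[x_q,x_1]$. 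No $I_i$ can have length $\ge 1/d$ (else $\md(I_i)=\RR/\ZZ$, contradicting that $\md(I_i)$ is a proper union of partition intervals), so $\md|_{I_i}$ is an orientation-preserving affine map of slope $d$; comparing lengths gives $A\bl=d\bl$ for the probability vector $\bl=(|I_1|,\dots,|I_q|)$, so by \thmref{PF} the interval lengths are determined by $\sigma$ alone. The ``only if'' direction now drops out: $\md(0)=0$ and $0\in I_q$ give $0\in\md(I_q)$, and since $\md(I_q)$ is a union of partition intervals of which $I_q$ is the only one containing $0$, it must contain $I_q$; that is, $a_q=a_{qq}=1$.

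To pin down the orbit itself I would analyze the digits $e_i:=\lfloor d\,x_i\rfloor\in\{0,\dots,d-1\}$. The $d$ preimages $0,\tfrac1d,\dots,\tfrac{d-1}{d}$ of $0$ under $\md$ lie in $d$ distinct partition intervals (these being shorter than $1/d$), and for $i\ne q$ one has: $I_i\ni m/d$ for some $m$ $\iff$ $0\in\md(I_i)$ $\iff$ $I_q\subset\md(I_i)$ $\iff$ $\sigma$ has a descent at $i$. Hence $I_q$ contains no $m/d$, which forces $x_1<1/d$, so $e_1=0$; and for $1\le i\le q-1$, $e_{i+1}-e_i$ is the number of the $m/d$ inside $I_i$, namely $1$ if $\sigma$ descends at $i$ and $0$ otherwise. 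Thus $(e_1,\dots,e_q)$ is forced: $e_i=\#\{1\le l\le i-1:\sigma(l)>\sigma(l+1)\}$. Since $\md(x_i)=x_{\sigma(i)}$, the $\md$-itinerary of $x_i$ relative to the laps $[m/d,(m+1)/d)$ is $(e_i,e_{\sigma(i)},e_{\sigma^2(i)},\dots)$; this eventually periodic sequence is not eventually constant equal to $d-1$ (because $e_1=0$), so it determines $x_i=\sum_{n\ge 1}e_{\sigma^{n-1}(i)}d^{-n}$. Hence the realization is unique.

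For existence when $a_q=1$ I would run this forward: define $e_i$ by the formula just obtained and set $x_i:=\sum_{n\ge1}e_{\sigma^{n-1}(i)}d^{-n}$. From $d\,x_i=e_i+x_{\sigma(i)}$ one reads off that each $x_i\in(0,1)$ (the word $(e_i)$ is neither all $0$ nor all $d-1$, as $d\ge 2$ forces $e_1=0<d-1=e_q$), that $\lfloor d\,x_i\rfloor=e_i$, and that $\md(x_i)=x_{\sigma(i)}$. The substantive point is the ordering $x_1<\dots<x_q$: for $1\le i\le q-1$ I would compare the itineraries of $x_i$ and $x_{i+1}$, which begin at $\sigma^0(i)=i<i+1$; as long as $e_{\sigma^n(i)}=e_{\sigma^n(i+1)}$, monotonicity of $l\mapsto e_l$ forces $\sigma$ to be increasing on $[\sigma^n(i),\sigma^n(i+1)]$, hence $\sigma^{n+1}(i)<\sigma^{n+1}(i+1)$; this cannot persist for all $n$ (take $n$ with $\sigma^n(i)=q$ to get $q<\sigma^n(i+1)\le q$), so at the first $n$ with $e_{\sigma^n(i)}\ne e_{\sigma^n(i+1)}$ one has $e_{\sigma^n(i)}<e_{\sigma^n(i+1)}$, i.e. $x_i<x_{i+1}$. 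The $x_i$ are then distinct, so $\{x_1,\dots,x_q\}$ is a genuine orbit of $\md$ of exact period $q$ (since $\sigma$ is a $q$-cycle) realizing $\sigma$.

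The only step requiring a genuine argument is this monotonicity check; everything else is bookkeeping with the fixed points and lap boundaries $m/d$ of $\md$, together with the Perron--Frobenius input of \thmref{PF}. I would also be careful, in the uniqueness half, to verify $e_1=0$ and $e_{i+1}-e_i=[\sigma\text{ descends at }i]$ for an \emph{arbitrary} realization, since it is exactly this rigidity of the digit vector that pins $\OO$ down.
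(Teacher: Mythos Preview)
Your proof is correct and takes a genuinely different route from the paper's. The paper constructs the realization by solving the eigenvector equation $A\bl=d\bl$ via Perron--Frobenius (\thmref{PF}) to obtain the interval lengths, then writes down $x_1$ explicitly in terms of the $\ell_j$ and verifies $0<x_1<\cdots<x_q<1$ directly; the hypothesis $a_q=1$ enters through the inequality $x_1<\ell_q$. Uniqueness in the paper combines PF (forcing the lengths) with a fixed-point argument that pins $x_1$ down modulo $1/(d-1)$. You instead argue through symbolic dynamics: the base-$d$ digit $e_i=\lfloor d\,x_i\rfloor$ is read off from the descent pattern of $\sigma$, the orbit point is recovered as the $d$-adic series $x_i=\sum_{n\ge 1}e_{\sigma^{n-1}(i)}d^{-n}$, and the ordering $x_i<x_{i+1}$ follows from a clean lexicographic comparison of itineraries. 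Your approach is more elementary in that Perron--Frobenius, though you mention it, is never actually used; the paper's eigenvector framework, by contrast, is what extends smoothly to the non-minimal realizations of \S\ref{GEN}, where the same template with $B=A+P$ in place of $A$ does all the work. One small slip: the clause ``Hence $I_q$ contains no $m/d$'' is not literally true, since $0=0/d\in I_q$; what your descent count actually gives (once $a_q=1$ is known) is that $I_q$ contains no $m/d$ with $1\le m\le d-1$, and that is precisely what forces $x_1<1/d$.
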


Notice that the signature condition $a_q=1$ is equivalent to the inequality $\sigma(1)<\sigma(q)$ (in the linear order of $\{ 1, \ldots, q \}$) since both conditions mean $i=q$ is a descent of $\sigma$.

\begin{proof}
Necessity is clear: If $(\md,\OO)$ is a realization of $\sigma$, then it is a minimal realization of $\sigma$. The interval $I_q$ in the corresponding partition contains the fixed point $0$ of $\md$. As such, $\md(I_q) \supset I_q$, which means $I_q$ is a marked interval. Thus, $a_q=1$. \vs

Conversely, suppose $a_q=1$. We are looking for a period $q$ orbit $\OO=\{ x_1, \ldots, x_q \}$ of $\md$ such that $\md(x_i) = x_{\sigma(i)}$ for all $i$. Assume for a moment that such $\OO$ exists, let $I_i=[x_i,x_{i+1}]$, consider the lengths $\ell_i=|I_i|$, and form the probability vector  
$$
\bl=(\ell_1,\ldots, \ell_q)
$$
in $\RR^q$ which has strictly positive components. Since $\md$ maps $I_i$ homeomorphically onto $\bigcup_{j \in [\sigma(i),\sigma(i+1))} I_j$, we have
\begin{equation}\label{foo}
\sum_{j \in [\sigma(i),\sigma(i+1))} \ell_j = d \ell_i \qquad \text{for all} \ i. 
\end{equation}
These $q$ relations can be written as  
\begin{equation}\label{AL}
A \bl = d \bl,
\end{equation}
where $A$ is the transition matrix of $\sigma$. By \thmref{PF}, this equation has a unique solution $\bl$ which determines the lengths of the partition intervals $\{ I_i\} $, hence the orbit $\OO$ once we find $x_1$. \vs

To construct the orbit $\OO=\{ x_1, \ldots, x_q \}$, take the unique solution $\bl = (\ell_1, \ldots, \ell_q)$ of \eqref{AL} and define  
\begin{equation}\label{deforb}
\begin{cases}
x_1 = & \dfrac{1}{d-1}  \ds{\sum_{j \in [1,\sigma(1))} \ell_j} \vs \vs \\
x_i = & x_1 \ + \ \ds{\sum_{j \in [1,i)} \ell_j} \qquad \text{for} \ 2 \leq i \leq q.
\end{cases}
\end{equation}
Evidently $0< x_1 < x_2 < \cdots <x_q$. Since $a_q=1$, we have the linear order $1<\sigma(1)<\sigma(q)<q$, so $[1,\sigma(1))$ is a proper subset of $[\sigma(q),\sigma(1))$ that does not contain $q$. Hence by \eqref{deforb} and \eqref{foo}, 
$$
(d-1) x_1 = \sum_{j \in [1,\sigma(1))} \ell_j < \ \big( \hspace{-6mm} \sum_{j \in [\sigma(q),\sigma(1))} \ell_j \big) - \ell_q = (d-1) \ell_q. 
$$
This implies $x_1< \ell_q = x_1+1-x_q$ which shows $x_q<1$. Thus, the points $0,x_1,\ldots,x_q$ are in positive cyclic order on the circle. \vs 

To see that $(\md,\OO)$ is a realization of $\sigma$, note that by \eqref{deforb}  
$$
d x_1 = x_1 + \sum_{j \in [1,\sigma(1))} \ell_j = x_{\sigma(1)},
$$
and for $2 \leq i \leq q$,
\begin{align*}
d x_i & = d x_1 + \sum_{j \in [1,i)} d \ell_i \\
& =  x_1 + \sum_{j \in [1,\sigma(1))} \ell_j + \sum_{j \in [1,i)} \ \sum_{n \in [\sigma(j),\sigma(j+1))} \ell_n \qquad (\text{by} \ \eqref{foo}) \\
& = x_1 + \sum_{j \in [1,\sigma(i))} \ell_j \qquad (\operatorname{mod} \ \ZZ) \\
& = x_{\sigma(i)}.
\end{align*}
Thus, $\md(x_i) = x_{\sigma(i)}$ for every $i$, as required. \vs

For uniqueness, suppose $\OO' = \{ x'_1, \ldots, x'_q \}$ is another realization of $\sigma$ under $\md$. By what we have seen, the lengths $\ell'_i=x'_{i+1}-x'_i$ must be identical to the above $\ell_i$, so $\OO'=\OO$ follows once we check that $x'_1=x_1$. Now 
$$
(d-1) x'_1 = x'_{\sigma(1)} -x'_1 = \! \! \sum_{j \in [1,\sigma(1))} \ell'_j = \! \! \sum_{j \in [1,\sigma(1))} \ell_j = (d-1) x_1 \quad (\operatorname{mod} \ \ZZ),
$$
which means $x'_1$ and $x_1$ differ by an integer multiple of $1/(d-1)$. Since $0$ is the only fixed point of $\md$ in $(x'_q,x'_1)$ and $(x_q,x_1)$, both $x'_1$ and $x_1$ belong to $(0,1/(d-1))$, from which it follows that $x'_1=x_1$.
\end{proof} 

\begin{remark}
The realization orbit constructed above can be thought of as the unique fixed point of the following operator $L$ acting on the space of all ordered $q$-tuples on the circle: Given a point $\by= \{ y_1, \ldots, y_q \}$ in this space, form the probability vector $\bv=(v_1,\ldots,v_q)$ where $v_i:=y_{i+1}-y_i$, set $\bv'=(v'_1,\ldots,v'_q):=(1/d)A \bv$, and define $L(\by):= \{ y'_1,\ldots,y'_q \}$ by the equation \eqref{deforb}, replacing $x_i$ by $y'_i$ and $\ell_i$ by $v'_i$. It is easy to see using \eqref{ELL} that for any $\by$ the iterates $L^{\circ n}(\by)$ converge to the unique realization orbit $\bx=\{ x_1,\ldots,x_q \}$ as $n \to \infty$. Interpreted this way, the construction is reminiscent of other realization proofs in dynamics that invoke Thurston's idea of iteration on a suitable Teichm\"{u}ller space. 
\end{remark}

\begin{corollary}\label{shal}
Let $(\md,\OO)$ be the unique realization of $\sigma \in \sC_q$ with $d=\des(\sigma) \geq 2$ and $\sig(\sigma)=(a_1,\ldots,a_q=1)$. Then, each of the $d-1$ marked intervals of $\OO$ contains a single fixed point of $\md$. More precisely, if $i_1<\cdots<i_{d-1}=q$ are the indices $i$ for which $a_i=1$, then the marked interval $I_{i_j}$ contains the fixed point $j/(d-1)$. 
\end{corollary}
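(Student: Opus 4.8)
The plan is to combine the explicit description of the orbit $\OO$ obtained in the proof of \thmref{main1} with the elementary structure of the fixed points of $\md$. Recall that the fixed points of $\md$ are precisely the $d-1$ points $0,\tfrac{1}{d-1},\dots,\tfrac{d-2}{d-1}$, and they are all topologically repelling (for the lift $F(t)=dt$ the function $t\mapsto F(t)-t=(d-1)t$ is strictly increasing). Since $\OO$ has period $q\geq 2$, no orbit point $x_i$ is fixed, so each fixed point lies in the interior of a unique partition interval $I_i$. The statement then amounts to matching the $d-1$ fixed points with the $d-1$ marked intervals in the correct cyclic order, and I would do this in two steps: first show that the fixed points are distributed one per marked interval, then read off the indices.

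For the first step, recall that $(\md,\OO)$ is a minimal realization (its degree is $d=\des(\sigma)$), so $\md$ maps $I_i$ homeomorphically onto $\bigcup_{j\in[\sigma(i),\sigma(i+1))}I_j$. If $I_i$ is marked, i.e.\ $a_i=a_{ii}=1$, then this union contains $I_i$, while on the other hand it omits at least one partition interval (since $[\sigma(i),\sigma(i+1))$ is a proper subset of $\ZZ/q\ZZ$), so $d\ell_i=|\md(I_i)|<1$. Choosing the lift $[\tilde x_i,\tilde x_{i+1}]$ of $I_i$ and using that $\md$ lifts to $t\mapsto dt$, the inclusion $I_i\subset\md(I_i)$ translates into $(d-1)\tilde x_i\leq m\leq(d-1)\tilde x_{i+1}$ for some integer $m$, and both inequalities are strict because the endpoints of $I_i$ are not fixed; hence $\tfrac{m}{d-1}$ is a fixed point in the interior of $I_i$. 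Thus each of the $d-1$ marked intervals (\lemref{des}) carries at least one of the $d-1$ fixed points, and counting forces each marked interval to carry exactly one, with no fixed point lying outside the marked intervals.

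For the second step, observe first that the base point $0$ is a fixed point lying in $I_q$, and that $q=i_{d-1}$ because $a_q=1$; this settles the case $j=d-1$, reading $\tfrac{d-1}{d-1}$ as $1\equiv 0$. For $1\leq j\leq d-2$, let $I_{m_j}$ be the partition interval whose interior contains the fixed point $\tfrac{j}{d-1}$. By the first step $m_j\in\{i_1,\dots,i_{d-2}\}$; indeed $m_j\neq q$, since the unique fixed point in $I_q$ is $0\neq\tfrac{j}{d-1}$. Now the points $\tfrac{1}{d-1}<\dots<\tfrac{d-2}{d-1}$ all lie in $(0,x_q)$, and the intervals $I_1,\dots,I_{q-1}$ occur in this left-to-right order inside $(0,1)$; therefore $m_1<m_2<\dots<m_{d-2}$, and being $d-2$ distinct elements of the $(d-2)$-element set $\{i_1<\dots<i_{d-2}\}$ they must satisfy $m_j=i_j$. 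Hence $\tfrac{j}{d-1}\in I_{i_j}$ for every $j=1,\dots,d-1$, as claimed.

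I do not expect a serious obstacle here, as the argument is essentially bookkeeping built on \thmref{main1} and \lemref{des}; the one spot that needs a little care is the first step, namely verifying $d\ell_i<1$ and the strictness of the two inequalities so that the fixed point genuinely sits in the interior of $I_i$ rather than on its boundary, and then keeping the cyclic order straight relative to the base point $0$.
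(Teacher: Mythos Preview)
Your proof is correct and follows the same approach as the paper: show by counting that the $d-1$ fixed points are in bijection with the $d-1$ marked intervals, then use the cyclic order to pin down which fixed point lies in which interval. The paper's version is much terser (two sentences), leaving the existence of a fixed point in a marked interval to the characterization $\md(I)\supset I$ and the precise matching to the reader; you have simply unpacked both of these steps, including the lift argument and the ordering argument, which is fine.
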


\begin{proof}
A marked interval $I$ is characterized by the property $\md(I) \supset I$, so it must contain a unique fixed point of $\md$. The claim follows immediately since there are $d-1$ marked intervals and $d-1$ fixed points.  
\end{proof}

\begin{theorem}\label{main2}
Every combinatorial type $\tau$ in $\sC_q$ with $\des(\tau)=d \geq 2$ and $\sym(\tau)=s$ has $(d-1)/s$ realizations under $\md$. 
\end{theorem}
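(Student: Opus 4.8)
The strategy is to count the cycles $\sigma' \in \tau$ together with their unique realizations under $\md$, and then identify which of these realizations coincide as orbits on the circle. By \thmref{main1}, a cycle $\sigma' \in \sC_q$ with $\des(\sigma')=d$ has a (unique) realization under $\md$ precisely when the last component of $\sig(\sigma')$ equals $1$. Using the conjugation rule \eqref{sigcon}, as $\sigma'$ ranges over the $r$ cycles in $\tau = [\sigma]$ (where $r = q/s$), the signature $\sig(\sigma')$ ranges over the $r$ cyclic shifts of $\sig(\sigma)$; by \lemref{des} exactly $d-1$ of the $q$ components are $1$, and by \eqref{r-per} the signature is $r$-periodic, so among the $r$ distinct cyclic shifts of $\sig(\sigma)$ exactly $(d-1)/s$ of them have last component $1$. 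Hence exactly $(d-1)/s$ cycles in $\tau$ are realizable under $\md$, and each such cycle has exactly one realizing orbit.

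\textbf{Key steps in order.} First I would record the conjugation/periodicity bookkeeping: $\tau$ has $r=q/s$ distinct cycles $\sigma, \rho\sigma\rho^{-1}, \ldots, \rho^{r-1}\sigma\rho^{-(r-1)}$, and their signatures are the cyclic shifts of $\sig(\sigma)=(a_1,\ldots,a_q)$ by $0,1,\ldots,r-1$ positions. Since $\sig(\sigma)$ is $r$-periodic with exactly $d-1$ ones among its $q$ entries, it has exactly $(d-1)r/q = (d-1)/s$ ones in any window of length $r$; equivalently, exactly $(d-1)/s$ of the shifts place a $1$ in the last coordinate. Second, invoke \thmref{main1} to get, for each of these $(d-1)/s$ good cycles $\sigma'$, a unique realizing orbit $\OO_{\sigma'}$ under $\md$, and observe that for the non-good cycles in $\tau$ there is no realization. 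Third, I must check that these $(d-1)/s$ orbits are pairwise distinct and that a realization of $\tau$ under $\md$ is nothing more than a realization of one of these cycles. The latter is the definition of realizing a combinatorial type; for distinctness, note that if two cycles $\sigma', \sigma'' \in \tau$ had the same realizing orbit $\OO=\{x_1,\ldots,x_q\}$ under $\md$, then the common map $\md|_{\OO}$ would force $\md(x_i)=x_{\sigma'(i)}=x_{\sigma''(i)}$ for all $i$, hence $\sigma'=\sigma''$ since the $x_i$ are distinct and labeled in cyclic order. This shows the correspondence (good cycles in $\tau$) $\leftrightarrow$ (realizations of $\tau$ under $\md$) is a bijection, giving the count $(d-1)/s$.

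\textbf{Main obstacle.} The only genuinely delicate point is the combinatorial count that exactly $(d-1)/s$ of the $r$ cyclic shifts of $\sig(\sigma)$ end in $1$; everything else is immediate from \thmref{main1} and the labeling conventions. This count follows cleanly from the $r$-periodicity \eqref{r-per}: since $a_i = a_{i+r}$ for all $i$, the vector $(a_1,\ldots,a_q)$ is determined by its first $r$ entries $(a_1,\ldots,a_r)$, which are repeated $s$ times; the total number of ones is $d-1$, so the number of ones among $(a_1,\ldots,a_r)$ is $(d-1)/s$ (in particular $s \mid d-1$, which is consistent with \eqref{r-per} and \lemref{des}), and a shift by $t$ positions places $a_{q-t} = a_{r-t \bmod r}$ in the last slot, so it is $1$ for exactly $(d-1)/s$ values of $t \in \{0,1,\ldots,r-1\}$. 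I would also remark that this recovers \corref{shal} when $s=1$: then all $d-1$ conjugates of $\sigma$ with $a_q=1$ give rise to the $d-1$ rotated copies $\OO - j/(d-1)$ of a single orbit, anticipating \thmref{rotcop}.
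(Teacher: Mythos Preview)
Your proposal is correct and follows essentially the same approach as the paper: enumerate the $r=q/s$ conjugates in $\tau$, use \eqref{sigcon} to see their signatures are the cyclic shifts of $\sig(\sigma)$, invoke \thmref{main1} so that realizability is equivalent to the last signature entry being $1$, and then use $r$-periodicity \eqref{r-per} to count $(d-1)/s$ good shifts. Your added check that distinct cycles yield distinct orbits is correct and harmless (the paper leaves it implicit, since an orbit with a fixed labeling determines its cycle uniquely); your closing remark conflates \corref{shal} with \thmref{rotcop}, but this is tangential and does not affect the proof.
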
 

It follows in particular that $\sym(\tau)$ is always a divisor of $\des(\tau)-1$.  
 
\begin{proof}
Pick a representative cycle $\sigma$ for $\tau$ and let $\sig(\sigma)=(a_1,\ldots,a_q)$. Recall from \S \ref{CTC} that $\tau = \{ \rho^{-1} \sigma \rho, \ldots, \rho^{-r} \sigma \rho^{r}=\sigma \}$, where $r=q/s$. By \eqref{sigcon}, 
$$
\sig(\rho^{-j} \sigma \rho^j) = (a_{j+1},\ldots,a_q,a_1,\ldots,a_j).
$$
It follows from \thmref{main1} that the number of realizations of $\tau$ under $\md$ is equal to the number of integers $1 \leq j \leq r$ for which $a_j=1$. But $\sig(\sigma)$ is $r$-periodic in the sense of \eqref{r-per}, so the number of $1$'s among $a_1, \ldots, a_r$ is $(d-1)/s$.   
\end{proof}

\begin{remark}
Here is an alternate formulation of the above proof: Pick $\sigma$ and let $\sig(\sigma)=(a_1,\ldots,a_q)$ as before. The stabilizer $\sG$ of $\sigma$ in $\sR_q$ is the cyclic group of order $s$ generated by $\rho^r$, where $r=q/s$. Consider the set $\Delta= \{ \rho^{i_1}, \ldots, \rho^{i_{d-1}} \} \subset \sR_q$, where $i_1<\cdots<i_{d-1}$ are the indices $i$ for which $a_i=1$. By \eqref{sigcon}, for each $\alpha \in \Delta$, the conjugate cycle $\alpha^{-1} \sigma \alpha$ has its signature ending in $1$, so by \thmref{main1} it has a unique realization under $\md$. Moreover, two such conjugates $\alpha^{-1} \sigma \alpha$ and $\beta^{-1} \sigma \beta$ coincide if and only if $\alpha \beta^{-1} \in \sG$. Thus, the number of realizations of $\tau$ under $\md$ is equal to the number of distinct cosets $\alpha \sG$ for $\alpha \in \Delta$. Since $\sigma$ commutes with $\sG$, we have $\Delta \sG = \Delta$, which means $\Delta$ is a disjoint union of the cosets of $\sG$. It follows that the number of such cosets is $\# \Delta / \# \, \sG =(d-1)/s$.
\end{remark}

The $(d-1)/s$ realizations of $\tau$ in the above theorem can be described as rotated copies of a single periodic orbit. To see this, take a representative cycle $\sigma$ of $\tau$ with $\sig(\sigma)=(a_1,\ldots,a_q=1)$ and let $\OO$ be the unique realization of it under $\md$ as given by \thmref{main1}. Since the rigid rotation $x \mapsto x-1/(d-1) \ (\operatorname{mod} \ \ZZ)$ commutes with $\md$, the $d-1$ rotated copies 
$$
\OO_j=\OO-\frac{j}{d-1} \qquad 1 \leq j \leq d-1
$$ 
are all period $q$ orbits under $\md$ and give realizations of the combinatorial type $\tau$. Using \corref{shal}, it is easy to check that $\OO_j$ is the realization of the conjugate cycle $\rho^{-i_j} \sigma \rho^{i_j}$ (as before, $i_1<\cdots<i_{d-1}=q$ are the indices $i$ for which $a_i=1$). Thus, the uniqueness part of \thmref{main1} shows that $\OO_j=\OO_k$ if and only if $\rho^{i_j-i_k}$ belongs to the stabilizer of $\sigma$, or equivalently, $i_j = i_k \ (\operatorname{mod} r)$ where $r=q/s$. By the proof of \thmref{main2} there are $n=(d-1)/s$ residue classes modulo $r$ in $\{ i_1, \ldots, i_{d-1} \}$, each containing $s$ elements. In fact, the $r$-periodicity of $\sig(\sigma)$ shows that $1 \leq i_1 < \cdots < i_n =r$, from which it follows that these $n$ residue classes are of the form $i_j, i_j+r, \ldots, i_j+(s-1)r$, where $1 \leq j \leq n$. Thus, $\OO_1, \ldots, \OO_n=\OO$ are the distinct realizations of $\tau$ under $\md$. This proves the following  

\begin{theorem} \label{rotcop}
Let $\tau$ be a combinatorial type in $\sC_q$ with $\des(\tau)=d \geq 2$ and $\sym(\tau)=s$. Take a representative cycle $\sigma$ for $\tau$ with $\sig(\sigma)=(a_1,\ldots,a_q=1)$, and let $\OO$ be the unique realization of $\sigma$ under $\md$. Then, the rotated copies
$$
\OO-\frac{j}{d-1} \qquad \text{for} \ \ \ 0 \leq j < \frac{d-1}{s}
$$
form all the realizations of $\tau$ under $\md$. Furthermore, each of these $(d-1)/s$ orbits is invariant under the rigid rotation $x \mapsto x-1/s \ \ (\operatorname{mod} \ \ZZ)$.   
\end{theorem}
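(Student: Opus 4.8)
The plan is to build on \thmref{main1}, \corref{shal}, and \thmref{main2} to show that the $d-1$ rotated copies $\OO_j = \OO - j/(d-1)$ exhaust the realizations of $\tau$, coincide in blocks of size $s$, and each carry a $1/s$ rotational symmetry. First I would observe that since the rigid rotation $R\colon x \mapsto x - 1/(d-1) \pmod{\ZZ}$ commutes with $\md$, each $\OO_j = R^j(\OO)$ is again a period $q$ orbit of $\md$, and it realizes $\rho^{-i_j}\sigma\rho^{i_j}$ where $i_1 < \cdots < i_{d-1} = q$ are the positions of the $1$'s in $\sig(\sigma)$; this identification is exactly the content of \corref{shal}, which locates the fixed point $j/(d-1)$ inside the marked interval $I_{i_j}$, so shifting $\OO$ by $-j/(d-1)$ sends that interval to the one containing $0$. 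Every realization of $\tau$ realizes one of the conjugates $\rho^{-j}\sigma\rho^j$ with $\sig$ ending in $1$, and by \thmref{main1} such a realization is unique; hence every realization of $\tau$ is one of the $\OO_j$.

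Next I would pin down when $\OO_j = \OO_k$. By the uniqueness in \thmref{main1}, $\OO_j = \OO_k$ precisely when the cycles $\rho^{-i_j}\sigma\rho^{i_j}$ and $\rho^{-i_k}\sigma\rho^{i_k}$ coincide, i.e. when $\rho^{i_j - i_k}$ lies in the stabilizer $\sG$ of $\sigma$, i.e. when $i_j \equiv i_k \pmod{r}$ with $r = q/s$. Here I invoke the $r$-periodicity \eqref{r-per} of $\sig(\sigma)$: the index set $\{i_1, \ldots, i_{d-1}\}$ is a union of residue classes mod $r$, and each nonempty class it meets is hit exactly $s$ times (once in each block $[1,r], [r+1,2r], \ldots$). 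By the count in \thmref{main2} there are exactly $n := (d-1)/s$ such residue classes, and the $r$-periodicity forces the smallest representatives to satisfy $1 \le i_1 < \cdots < i_n = r$, with the full list being $i_j, i_j + r, \ldots, i_j + (s-1)r$ for $1 \le j \le n$. Therefore the distinct orbits among $\OO_0, \ldots, \OO_{d-1}$ are exactly $\OO_0, \OO_r, \OO_{2r}, \ldots$ — equivalently $\OO - j/(d-1)$ for $0 \le j < (d-1)/s$ — giving the claimed list of all realizations of $\tau$.

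Finally, for the symmetry statement: I want to show each such orbit is fixed by $x \mapsto x - 1/s \pmod{\ZZ}$. Note $R^r(\OO) = \OO - r/(d-1)$. Since $i_1 = \cdots$ — more precisely, since $\rho^r \in \sG$, the cycle $\rho^{-r}\sigma\rho^r$ equals $\sigma$, so by uniqueness $\OO_r = \OO$, i.e. $\OO - r/(d-1) = \OO$; thus $\OO$ is invariant under the rotation by $r/(d-1)$. Now $r/(d-1)$ and $1/s$ generate the same cyclic subgroup of $\RR/\ZZ$? Not quite — I need to be a little careful: invariance under rotation by $r/(d-1)$ means invariance under the subgroup it generates, which is $\frac{1}{(d-1)/\gcd(r,d-1)}\ZZ/\ZZ$; I must check this subgroup contains $1/s$, equivalently that $s \mid (d-1)/\gcd(r,d-1)$, which follows from the combinatorial bookkeeping above (the orbit of $0$ under repeated rotation by $r/(d-1)$ visits exactly the fixed points $jr/(d-1)$, and these are $(d-1)/\gcd$ in number while the marked-interval structure forces a symmetry of order divisible by $s$). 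Since every $\OO_j$ is a rigid rotate of $\OO$, it inherits the same rotational symmetry. The main obstacle I anticipate is precisely this last number-theoretic reconciliation — making sure the rotation that visibly fixes $\OO$ really is (a power generating) the rotation by $1/s$, rather than something of larger order — and the cleanest way around it is to argue directly from \corref{shal}: the stabilizer of $\OO$ as a subset of $\RR/\ZZ$ under the $\md$-commuting rotations is generated by $1/s$ because the pattern of which marked intervals contain fixed points is governed by the $r$-periodic signature, so a rotation preserves $\OO$ iff it permutes the fixed points compatibly with that period, which happens iff it is a multiple of $1/s$.
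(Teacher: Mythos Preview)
Your overall approach matches the paper's: rotate $\OO$ by multiples of $1/(d-1)$, identify which conjugate of $\sigma$ each rotated copy realizes via \corref{shal}, and use the uniqueness in \thmref{main1} together with the $r$-periodicity of $\sig(\sigma)$ to sort the copies into equivalence classes. The first two paragraphs are essentially correct, modulo one slip: the distinct orbits are $\OO_0,\OO_1,\ldots,\OO_{n-1}$ (consecutive indices), not ``$\OO_0,\OO_r,\OO_{2r},\ldots$''. Your ``equivalently'' clause gets the right list, but the preceding phrase confuses the rotation index $j$ with the signature period $r$.

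The real gap is in the symmetry paragraph. You assert $\OO-r/(d-1)=\OO$ on the grounds that $\rho^{-r}\sigma\rho^r=\sigma$, but this does not follow: the correspondence you set up is that $\OO-j/(d-1)$ realizes $\rho^{-i_j}\sigma\rho^{i_j}$, not $\rho^{-j}\sigma\rho^{j}$. In general $r/(d-1)$ is not even a multiple of $1/s$ (take $q=5$, $d=3$, $s=1$: then $r/(d-1)=5/2\equiv 1/2$, and \exref{sinu} shows $\OO-1/2\neq\OO$). This is why your gcd argument goes nowhere. The fix is already in your hands: you established that $i_{j+n}=i_j+r$ for $1\le j\le n$, hence $i_{j+n}\equiv i_j\pmod r$, hence $\OO_{j+n}=\OO_j$. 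But $\OO_{j+n}=\OO_j-n/(d-1)=\OO_j-1/s$, so every $\OO_j$ is invariant under $x\mapsto x-1/s$. That one line replaces your entire third paragraph.
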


\begin{corollary}
If $q$ is a prime number, every combinatorial type $\tau$ in $\sC_q$ with $\des(\tau)=d \geq 2$ has $d-1$ realizations under $\md$ which are rotated copies of one another under $x \mapsto x-1/(d-1)  \ (\operatorname{mod} \ \ZZ)$. 
\end{corollary}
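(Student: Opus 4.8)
The plan is to observe that this is essentially the specialization of Theorems~\ref{main2} and \ref{rotcop} to the case where the symmetry order is forced to be trivial. The only real input is a divisibility argument on $\sym(\tau)$.

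First I would recall that $s = \sym(\tau)$ is the order of the stabilizer of a representative cycle in the cyclic group $\sR_q$, hence $s$ divides $q$. Since $q$ is prime, the only possibilities are $s = 1$ and $s = q$. The case $s = q$ would mean the stabilizer is all of $\sR_q$, so in particular $\rho \sigma \rho^{-1} = \sigma$ for a representative $\sigma$ of $\tau$; as noted in \S\ref{CTC}, this forces $\sigma = \rho^p$ to be a rotation cycle, whence $\des(\sigma) = 1$ by \exref{rc=1}. This contradicts the hypothesis $\des(\tau) = d \geq 2$, so we must have $s = 1$.

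With $s = 1$ in hand, \thmref{main2} immediately gives that $\tau$ has $(d-1)/s = d-1$ realizations under $\md$. To get the rigid-rotation description, I would pick a representative $\sigma$ of $\tau$ whose signature ends in $1$ (possible by cycling, using \eqref{sigcon}), let $\OO$ be its unique realization under $\md$ from \thmref{main1}, and then invoke \thmref{rotcop}: since $s = 1$, the rotated copies $\OO - j/(d-1) \pmod{\ZZ}$ for $0 \leq j < (d-1)/s = d-1$ exhaust all realizations of $\tau$, and by construction any two of them differ by an iterate of the rigid rotation $x \mapsto x - 1/(d-1) \pmod{\ZZ}$.

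I do not expect any genuine obstacle here; the content is already contained in the two cited theorems, and the only step requiring a word of justification is ruling out $s = q$, which is the short argument above. If one wanted a self-contained write-up rather than a pointer to \thmref{rotcop}, the mild bookkeeping would be in identifying which conjugate $\rho^{-i_j}\sigma\rho^{i_j}$ corresponds to which rotated copy $\OO_j$, but with $s=1$ the indices $i_1 < \cdots < i_{d-1} = q$ are simply all indices with $a_i = 1$ and the correspondence is one-to-one by the uniqueness clause of \thmref{main1}.
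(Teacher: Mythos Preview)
Your proposal is correct and matches the paper's approach: the corollary is stated there without proof, as an immediate consequence of Theorems~\ref{main2} and~\ref{rotcop} once one notes that $\sym(\tau)=1$ when $q$ is prime and $\des(\tau)\geq 2$. Your divisibility argument ruling out $s=q$ is exactly the missing sentence, and the rest is a direct citation of those two theorems.
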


The following three examples illustrate the above results. It will be convenient here and elsewhere to use the simpler notation 
$$
\frac{1}{b} \, \big\{ a_1, \ldots, a_q \big\} \qquad \text{for} \qquad \Big\{ \frac{a_1}{b}, \ldots, \frac{a_q}{b} \Big\} 
$$  
when dealing with realization orbits that consist of rational numbers with the same denominator.
 
\begin{example}\label{sinu}
Let $\tau$ be the combinatorial type in $\sC_5$ represented by the cycle $\sigma=(1 \ 2 \ 4 \ 5 \ 3)$ of \exref{C5} with $\sig(\sigma)=(0,0,1,0,1)$. Here $\des(\tau)=3$ and $\sym(\tau)=1$. By \thmref{main2}, there are two realizations of $\tau$ under $\mathbf{m}_3$, corresponding to the representative cycles $\sigma$ and $\nu = \rho^{-3} \sigma \rho^3 = (1 \ 2 \ 5 \ 3 \ 4)$. For $\sigma$, the transition matrix $A$ and the unique probability vector solution of $A \bl = 3 \bl$ are\footnote{The computation of $\bl$ here and in the subsequent examples is most conveniently performed by rounding off the entries of the matrix $(d^q-1)d^{-n}A^n$ for a large enough $n$. It is easy to see using \eqref{ELL} that the resulting integer matrix has identical columns, each being $(d^q-1) \bl$.} 
$$
A = \begin{bmatrix}
0 & 1 & 1 & 0 & 0 \\
0 & 0 & 0 & 1 & 1 \\
1 & 1 & 1 & 1 & 0 \\
1 & 1 & 0 & 0 & 1 \\
1 & 0 & 1 & 1 & 1 \\
\end{bmatrix}
\qquad \text{and} \qquad \bl = \frac{1}{3^5-1} \, \big( 32, 38, 58, 46, 68 \big).
$$
Using the relations \eqref{deforb}, we obtain the realization 
$$
\OO = \frac{1}{121} \ \big\{ 8, 24, 43, 72, 95 \big\}.
$$
Rotating $\OO$ under $x \mapsto x-1/2 \ (\operatorname{mod} \ \ZZ)$ then gives the realization $\OO'$ of $\nu$:
$$
\OO' = \OO - \frac{1}{2} = \frac{1}{242} \ \big\{ 23, 69, 137, 169, 207 \big\}
$$
(compare \figref{re1}). 
\end{example}

%%%%%%%%%%%%%%%%%%%%%%%%%%%%%%%%%%%%%%%%%%%
\begin{figure}[t]
\begin{overpic}[width=0.8\textwidth]{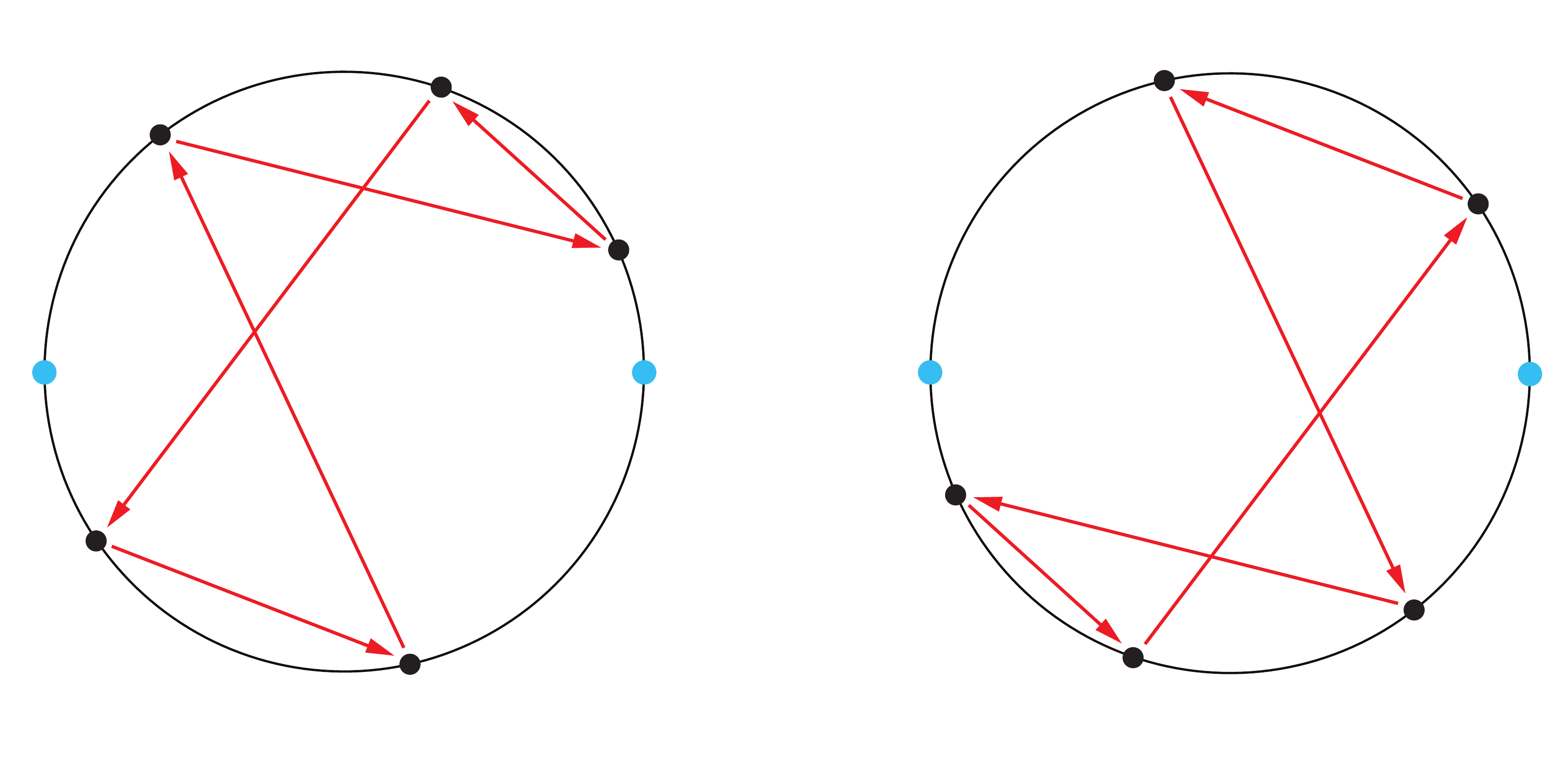}
\put (40.5,33) {\small $16$}
\put (27,45) {\small $48$}
\put (6,41.5) {\small $86$}
\put (0.5,11.5) {\small $144$}
\put (24,2.7) {\small $190$}
\put (95.5,37) {\small $23$}
\put (71,45) {\small $69$}
\put (55,15) {\small $137$}
\put (69,3) {\small $169$}
\put (89,6) {\small $207$}
\end{overpic}
\caption{\sl The two realizations of the combinatorial type $\tau$ of \exref{sinu} under $\mathbf{m}_3$ with $\des(\tau)=3$ and $\sym(\tau)=1$ (angles are shown in multiples of $1/242$). Left: The realization of the representative cycle $\sigma = (1 \ 2 \ 4 \ 5 \ 3)$. Right: The realization of the conjugate cycle $\nu= (1 \ 2 \ 5 \ 3 \ 4)$. The blue dots are the fixed points $0,1/2$ of $\mathbf{m}_3$.}  
\label{re1}
\end{figure}
%%%%%%%%%%%%%%%%%%%%%%%%%%%%%%%%%%%%%%%%%%%
    
\begin{example}\label{sinv}
Next consider the combinatorial type $\tau$ in $\sC_6$ represented by the cycle $\sigma=(1 \ 2 \ 5 \ 6 \ 3 \ 4)$ with $\sig(\sigma)=(0,1,0,1,0,1)$. Here $\des(\tau)=4$ and $\sym(\tau)=3$. By \thmref{main2}, there is only one realization of $\tau$ under $\mathbf{m}_4$ corresponding to $\sigma= \rho^{-2} \sigma \rho^2 = \rho^{-4} \sigma \rho^4$. The transition matrix $A$ of $\sigma$ and the probability vector solution of $A \bl = 4 \bl$ are 
$$
A = \begin{bmatrix}
0 & 1 & 1 & 1 & 0 & 0 \\
1 & 1 & 1 & 0 & 1 & 1 \\
0 & 0 & 0 & 1 & 1 & 1 \\
1 & 1 & 1 & 1 & 1 & 0 \\
1 & 1 & 0 & 0 & 0 & 1 \\
1 & 0 & 1 & 1 & 1 & 1 \\
\end{bmatrix}
$$
and
$$
\bl = \frac{1}{4^6-1} \, \big( 546, 819, 546, 819, 546, 819 \big).
$$
Using the relations \eqref{deforb}, we obtain the realization 
$$
\OO= \frac{1}{45} \ \big\{ 2, 8, 17, 23, 32, 38 \big\}.
$$
Observe that $\OO=\OO-1/3=\OO-2/3$ (compare \figref{re2}).
\end{example}

%%%%%%%%%%%%%%%%%%%%%%%%%%%%%%%%%%%%%%%%%%%
\begin{figure}[t]
\begin{overpic}[width=0.38\textwidth]{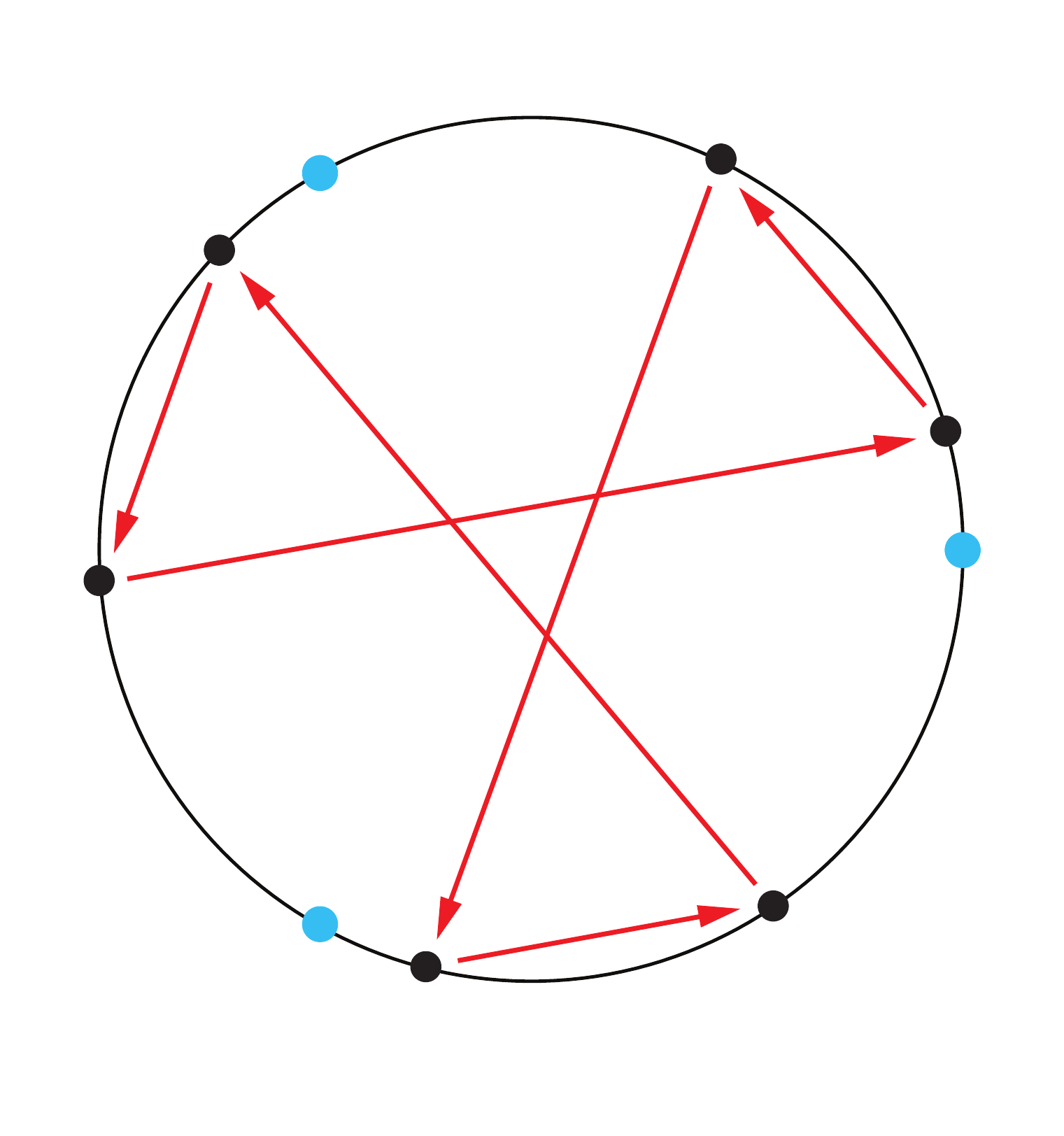}
\put (86,62) {\small $2$}
\put (64,89) {\small $8$}
\put (11,79.5) {\small $17$}
\put (-1,47) {\small $23$}
\put (33,7) {\small $32$}
\put (66,12) {\small $38$}
\end{overpic}
\caption{\sl The unique realization of the combinatorial type $\tau$ of \exref{sinv} under $\mathbf{m}_4$ with $\des(\tau)=4$ and $\sym(\tau)=3$ (angles are shown in multiples of $1/45$). Here the representative cycle is $\sigma = (1 \ 2 \ 5 \ 6 \ 3 \ 4)$. The blue dots are the fixed points $0, 1/3, 2/3$ of $\mathbf{m}_4$.}  
\label{re2}
\end{figure}
%%%%%%%%%%%%%%%%%%%%%%%%%%%%%%%%%%%%%%%%%%%

\begin{example}\label{sinw}
Finally, consider the combinatorial type $\tau$ in $\sC_8$ represented by the cycle $\sigma=(1 \ 2 \ 4 \ 7 \ 5 \ 6 \ 8 \ 3)$ with $\sig(\sigma)=(0,0,1,1,0,0,1,1)$. Here $\des(\tau)=5$ and $\sym(\tau)=2$. By \thmref{main2}, there are two realizations of $\tau$ under $\mathbf{m}_5$, corresponding to the representative cycles $\sigma = \rho^{-4} \sigma \rho^4$ and $\nu = \rho^{-3} \sigma \rho^3 = \rho^{-7} \sigma \rho^7 = (1 \ 4 \ 2 \ 3 \ 5 \ 8 \ 6 \ 7)$. The transition matrix $A$ of $\sigma$ and the probability vector solution of $A \bl = 5 \bl$ are 
$$
A = \begin{bmatrix}
0 & 1 & 1 & 0 & 0 & 0 & 0 & 0 \\
0 & 0 & 0 & 1 & 1 & 1 & 1 & 1 \\
1 & 1 & 1 & 1 & 1 & 1 & 0 & 0 \\
1 & 1 & 1 & 1 & 1 & 0 & 1 & 1 \\
0 & 0 & 0 & 0 & 0 & 1 & 1 & 0\\
1 & 1 & 1 & 1 & 0 & 0 & 0 & 1 \\
1 & 1 & 0 & 0 & 1 & 1 & 1 & 1 \\
1 & 0 & 1 & 1 & 1 & 1 & 1 & 1 
\end{bmatrix}
$$
and
$$
\bl = \frac{1}{5^8-1} \, \big( 21284, 52584, 53836, 67608, 21284, 52584, 53836, 67608 \big). 
$$ 
%%%%%%%%%%%%%%%%%%%%%%%%%%%%%%%%%%%%%%%%%%%
\begin{figure}[t]
\begin{overpic}[width=0.8\textwidth]{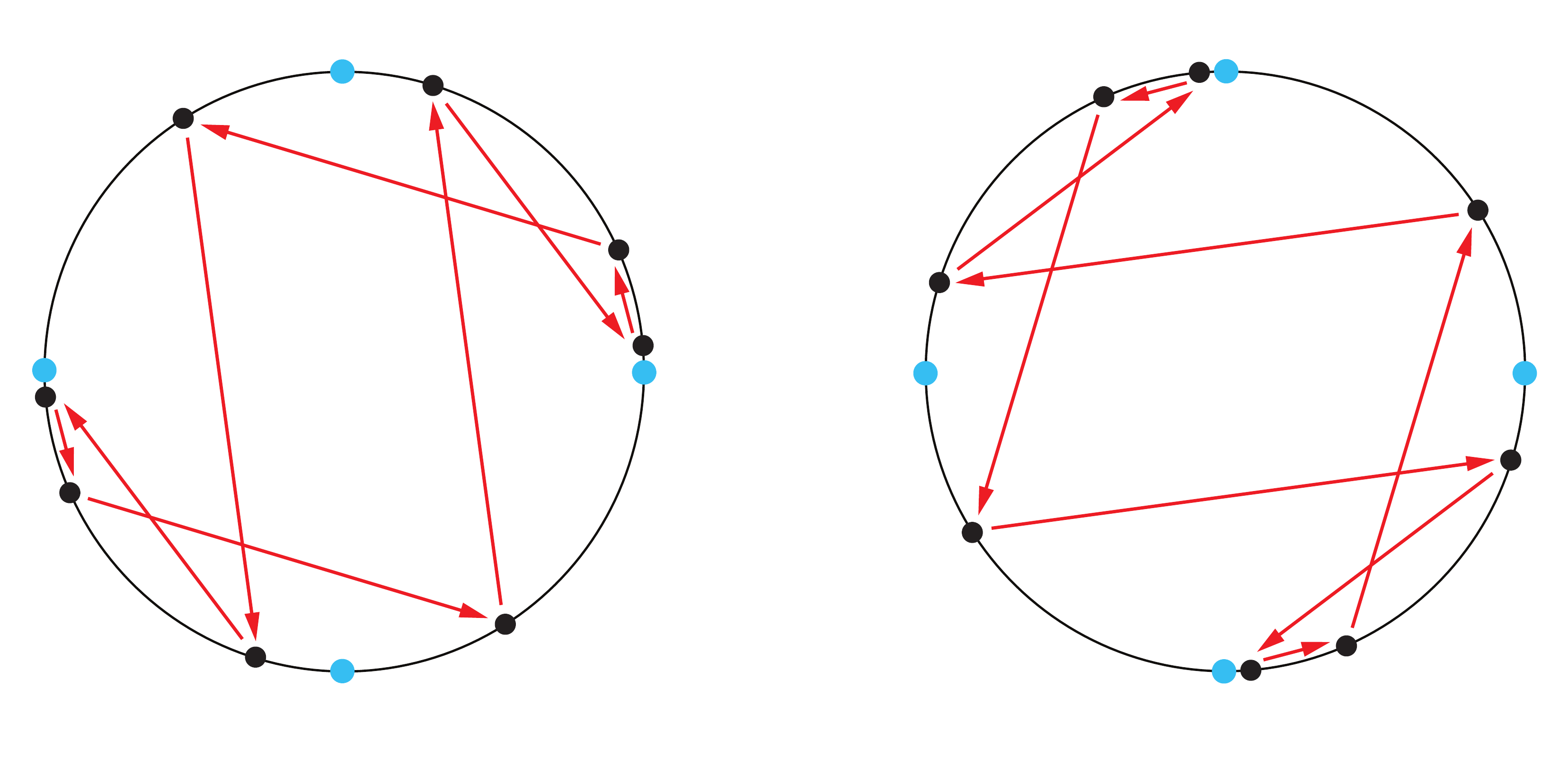}
\put (42.3,26.5) {\small $17$}
\put (41,32.5) {\small $85$}
\put (26,45) {\small $253$}
\put (5.5,42) {\small $425$}
\put (-3,22.5) {\small $641$}
\put (-1.5,15) {\small $709$}
\put (13,3) {\small $877$}
\put (32,5) {\small $1049$}
\put (95.5,35) {\small $113$}
\put (73,46) {\small $329$}
\put (65,43.5) {\small $397$}
\put (54,30.5) {\small $565$}
\put (56.5,12) {\small $737$}
\put (77.3,2.3) {\small $953$}
\put (85.5,4) {\small $1021$}
\put (97.5,19) {\small $1189$}
\end{overpic}
\caption{\sl The two realizations of the combinatorial type $\tau$ of \exref{sinw} under $\mathbf{m}_5$ with $\des(\tau)=5$ and $\sym(\tau)=2$ (angles are shown in multiples of $1/1248$). Left: The realization of the representative cycle $\sigma = (1 \ 2 \ 4 \ 7 \ 5 \ 6 \ 8 \ 3)$. Right: The realization of the conjugate cycle $\nu = (1 \ 4 \ 2 \ 3 \ 5 \ 8 \ 6 \ 7)$. The blue dots are the fixed points $0, 1/4, 2/4, 3/4$ of $\mathbf{m}_5$.}  
\label{re3}
\end{figure}
%%%%%%%%%%%%%%%%%%%%%%%%%%%%%%%%%%%%%%%%%%%
Using \eqref{deforb}, we obtain the realization 
$$
\OO= \frac{1}{1248} \ \big\{ 17, 85, 253, 425, 641, 709, 877, 1049 \big\}. 
$$
Rotating $\OO$ under $x \mapsto x-1/4 \ (\operatorname{mod} \ \ZZ)$ then gives the realization $\OO'$ of $\nu$:
$$
\OO' = \OO - \frac{1}{4} =  \frac{1}{1248} \ \big\{ 113, 329, 397, 565, 737, 953, 1021, 1189 \big\}. 
$$
Notice that $\OO=\OO-1/2$ and $\OO' = \OO'-1/2$ (compare \figref{re3}).
\end{example}

We end this section with a slightly different view of \thmref{main1} which will be exploited in the next section.  

\begin{definition}
Let $\OO = \{ x_1, \ldots, x_q \}$ be a period $q$ orbit of $\mk$. The {\bit fixed point distribution} of $\OO$ is the integer vector $\fix(\OO)=(n_1,\ldots,n_q)$, where $n_i$ is the number of fixed points of $\mk$ in (the interior of) the partition interval $I_i=[x_i,x_{i+1}]$.\footnote{The fixed point distribution is a dual to the ``deployment vector'' introduced by Goldberg in \cite{G} and further developed in \cite{Z1}; see the discussion at the end of the next section leading to \thmref{main3alt}.}     
\end{definition}

Thus $\fix(\OO)$ records how the $k-1$ fixed points of $\mk$ are distributed among the $q$ points of $\OO$. \vs 

The following properties of $\fix(\OO)=(n_1,\ldots,n_q)$ are immediate: \vs 
\begin{enumerate}
\item[$\bullet$]
$\sum_{i=1}^q n_i=k-1$,  \vs
\item[$\bullet$]
$n_q \geq 1$. \vs
\end{enumerate}

For example, suppose $(\md,\OO)$ is a minimal realization of $\sigma \in \sC_q$ with $\des(\sigma)=d \geq 2$ and $\sig(\sigma)=(a_1, \ldots, a_q)$. Then the interval $I_i$ contains a (unique) fixed point if and only if it is marked. Hence the component $n_i$ of $\fix(\OO)$ is $1$ if $a_i=1$, and is $0$ otherwise. In other words, 
\begin{equation}\label{dep=sig}
\fix(\OO)=\sig(\sigma) \qquad \text{in the minimal case}
\end{equation}
and in particular, $a_q=n_q=1$. \thmref{main1} can be interpreted as saying that this necessary condition is also sufficient, and the resulting realization orbit is unique. \vs

As illustrations of the equation \eqref{dep=sig}, we invite the reader to look back at the five realizations in Examples \ref{sinu}, \ref{sinv}, and \ref{sinw} shown in Figs. \ref{re1}, \ref{re2}, and \ref{re3}, respectively.    

\section{Realizations under $\mk$: The general case}\label{GEN}

We now address the problem of finding realizations of $\sigma \in \sC_q$ under $\mk$ where $k> d=\des(\sigma) \geq 1$ (notice that we are now allowing $\sigma$ to be a rotation cycle). Suppose $(\mk,\OO)$ is such a realization, and consider the minimal realization $(f,\OO)$ defined by mapping each partition interval $I_i$ affinely onto the union $\bigcup_{j \in [\sigma(i),\sigma(i+1))} I_j$. For each $1 \leq i \leq q$ there is an integer $p_i \geq 0$ such that $\mk$ is a $p_i$-winding of $f$ on $I_i$, hence $\sum p_i = k-d$. Recall from the proof of \lemref{des} that $f$ has $d-1$ fixed points, one in each marked interval. It follows from \lemref{fp+k} that the components of $\fix(\OO)=(n_1,\ldots,n_q)$ are given by 
$$
n_i = \begin{cases} p_i+1 & \quad \text{if} \ I_i \ \text{is marked} \\
p_i & \quad \text{if} \ I_i \ \text{is unmarked.}
\end{cases}
$$
In other words, 
\begin{equation}\label{kal}
\fix(\OO)= \sig(\sigma) + \bp, \quad \text{where} \quad \bp=(p_1,\ldots, p_q).  
\end{equation}
In particular $\fix(\OO) \geq \sig(\sigma)$ componentwise. This shows that $\mk$ has at least one fixed point in each of the $d-1$ marked intervals, and at least one in $I_q$ if it is not marked. \vs

It turns out that this necessary restriction on $\fix(\OO)$ is also sufficient. In fact, \thmref{main3} below will show that as long as we put one fixed point of $\mk$ in each marked interval (and one in $I_q$, if it is not marked), then we are free to place the remaining fixed points of $\mk$ among the partition intervals as we wish, and the resulting distribution will be $\fix(\OO)$ for some realization $(\mk,\OO)$ of $\sigma$. To formulate this precisely, we first need the following

\begin{definition}\label{adv}
Let $\sigma \in \sC_q$ with $\des(\sigma)=d$ and let $k>d$. A non-negative integer vector $\bn=(n_1, \ldots, n_q)$ is {\bit $\sigma$-admissible} in degree $k$ if \vs
\begin{enumerate}
\item[$\bullet$]
$\sum_{i=1}^q n_i = k-1$, \vs
\item[$\bullet$]
$n_q \geq 1$, \vs
\item[$\bullet$]
$\bn \geq \sig(\sigma)$ componentwise. \vs
\end{enumerate}     
(The third condition holds automatically if $d=1$.) \vs
\end{definition}

The following is an analog of \thmref{main1}: 

\begin{theorem}\label{main3} 
Let $\sigma \in \sC_q$ with $\des(\sigma)=d$ and let $k>d$. Then, for every $\sigma$-admissible vector $\bn=(n_1,\ldots,n_q)$ in degree $k$, there are $n_q$ realizations $(\mk,\OO)$ of $\sigma$ with $\fix(\OO)=\bn$. More precisely, for each $0 \leq n <n_q$ there is a unique realization  $\OO_n=\{ x_1, \ldots, x_q \}$ with $n$ fixed points of $\mk$ in $(0,x_1)$. As a result, these realizations are rotated copies of one another:
$$
\OO_n =\OO_0+\frac{n}{k-1} \qquad 0 \leq n < n_q.                 
$$   
\end{theorem}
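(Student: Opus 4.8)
The strategy mirrors the proof of \thmref{main1}: use the Perron–Frobenius eigenvector of an appropriately enlarged transition matrix to reconstruct the interval lengths, then build the orbit explicitly by a formula analogous to \eqref{deforb} and verify that it does what it should. Given the $\sigma$-admissible vector $\bn=(n_1,\dots,n_q)$, set $\bp=\bn-\sig(\sigma)\geq \mathbf 0$; this is the winding vector, and $\sum p_i = (k-1)-(d-1)=k-d$, so $k=\des(\sigma)+\sum p_i$ as in \S\ref{TMC}. Let $B=A+P$ be the transition matrix of the pair $(\sigma,\bp)$, so that $(1/k)B$ is column stochastic and, by the discussion preceding \thmref{PF1}, $B^q$ has positive entries. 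Apply \thmref{PF1} to obtain the unique probability vector $\bl=(\ell_1,\dots,\ell_q)$ with $B\bl=k\bl$ and all $\ell_i>0$. These $\ell_i$ will be the lengths of the partition intervals $I_i=[x_i,x_{i+1}]$.

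\emph{Constructing the orbit.} Fix $0\leq n<n_q$. Define $x_1,\dots,x_q$ by the analog of \eqref{deforb}: the cumulative sums $x_i = x_1 + \sum_{j\in[1,i)}\ell_j$ for $2\leq i\leq q$, while $x_1$ is pinned down so that $(k-1)x_1$ equals the appropriate partial sum of $\ell_j$'s plus the contribution $n$ from the $n$ fixed points of $\mk$ that we want in $(0,x_1)$ — concretely something like $(k-1)x_1 = \sum_{j\in[1,\sigma(1))}\ell_j + (\text{correction recording }\bp\text{ and }n)$. The correct bookkeeping here is that $\mk(x_1)-x_1$ must be an integer and equals (number of fixed points of $\mk$ strictly between $x_1$ and its image); since $\mk x_i = x_{\sigma(i)}\ (\mathrm{mod}\ \ZZ)$, the integer by which $kx_i$ exceeds $x_{\sigma(i)}$ is the number of fixed points of $\mk$ in the arc from $x_i$ traversed to $x_{\sigma(i)}$, and this is a fixed-point-count that must be consistent with $\bn$ and with the $p_i$-winding structure. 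One then checks, exactly as in \thmref{main1}: (a) $0<x_1<\cdots<x_q$ from positivity of the $\ell_i$; (b) $x_q<1$, i.e. the points $0,x_1,\dots,x_q$ are in positive cyclic order, using the relation $B\bl=k\bl$ on the last row together with $a_q+p_q = n_q\geq 1$; (c) $\mk(x_i)=x_{\sigma(i)}$ for all $i$, by the same telescoping computation that used \eqref{foo}, now with $B$ in place of $A$; and (d) $\fix(\OO_n)=\bn$, by counting fixed points of $\mk$ in each $I_i$ via \lemref{fp+k} applied to the $p_i$-winding of the minimal realization.

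\emph{Uniqueness and the rotation relation.} For uniqueness: if $(\mk,\OO')$ is any realization of $\sigma$ with $\fix(\OO')=\bn$ and with exactly $n$ fixed points of $\mk$ in $(0,x_1')$, then by \eqref{kal} the winding vector of $\OO'$ over its affine minimal realization is forced to be $\bp$, so the interval lengths $\ell_i'$ satisfy $B\bl'=k\bl'$ and hence $\bl'=\bl$ by the uniqueness in \thmref{PF1}; then $x_1'$ is determined modulo $1/(k-1)$ by the same computation as before — $(k-1)x_1' \equiv (k-1)x_1\ (\mathrm{mod}\ \ZZ)$ — and the stipulation that precisely $n$ fixed points lie in $(0,x_1')$ (equivalently, $x_1'\in(n/(k-1),(n+1)/(k-1))$) pins it to $x_1'=x_1$. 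Finally, the rotation identity $\OO_n=\OO_0+n/(k-1)$ is immediate: the rigid rotation $x\mapsto x+1/(k-1)\ (\mathrm{mod}\ \ZZ)$ commutes with $\mk$ and shifts the count of fixed points in $(0,x_1)$ by one, so $\OO_0+n/(k-1)$ is a realization with fixed-point distribution $\bn$ and with $n$ fixed points in its leading arc, whence it equals $\OO_n$ by uniqueness.

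\emph{Main obstacle.} The routine parts are the Perron–Frobenius input and the telescoping verification of $\mk(x_i)=x_{\sigma(i)}$, which transcribe almost verbatim from \thmref{main1}. The delicate point is the \emph{exact} bookkeeping of fixed points: getting the offset in the definition of $x_1$ right so that the $n_i$ fixed points of $\mk$ really land in the interiors of the $I_i$ in the prescribed numbers, and simultaneously checking $x_q<1$. This requires carefully tracking, for each $i$, how the integer "overflow" $kx_i - x_{\sigma(i)}$ decomposes into the $d-1$ structurally-forced fixed points (those coming from $\sig(\sigma)$, whose positions among the $I_j$ are dictated by $A$ as in \corref{shal}) plus the $\sum p_j$ freely-placed ones; \lemref{fp+k} is the right tool, but applying it interval-by-interval and reassembling the global picture is where the real care is needed.
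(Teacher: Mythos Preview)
Your plan is correct and matches the paper's proof essentially step for step: set $\bp=\bn-\sig(\sigma)$, form $B=A+P$, take the Perron--Frobenius eigenvector $\bl$ from \thmref{PF1}, build the $x_i$ by cumulative sums, verify $\mk(x_i)=x_{\sigma(i)}$ by telescoping, and deduce uniqueness and the rotation relation exactly as you describe. The ``correction'' you hesitate over is simpler than you fear---it is just $n$, with no further $\bp$-term: the paper sets $(k-1)x_1 = n + \sum_{j\in[1,\sigma(1))}\ell_j$, and the check $x_q<1$ (equivalently $x_1<\ell_q$) is done by a short inequality using the $q$-th row of $B\bl=k\bl$, split into the two cases $a_q=1$ (so $p_q=n_q-1\geq n$) and $a_q=0$ (so $p_q=n_q\geq n+1$).
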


\begin{proof}
The proof is similar to \thmref{main1}. Assume that $\OO=\{ x_1, \ldots, x_q \}$ is a realization of $\sigma$ under $\mk$ with $\fix(\OO)=\bn$, let $I_i=[x_i,x_{i+1}]$ and as before consider the lengths $\ell_i=|I_i|$ and form the probability vector $\bl=(\ell_1,\ldots, \ell_q) \in \RR^q$. Set $\bp= \bn - \sig(\sigma)=(p_1, \ldots, p_q)$. Then \eqref{kal} and \lemref{fp+k} show that $\mk$ is a $p_i$-winding of the piecewise affine minimal realization on $I_i$. Hence, 
\begin{equation}\label{foo1}
p_i + \sum_{j \in [\sigma(i),\sigma(i+1))} \ell_j = k \ell_i \qquad \text{for all} \ i. 
\end{equation}
These $q$ relations can be written as  
\begin{equation}\label{AL1}
B \bl = k \bl,
\end{equation}
where $B$ is the transition matrix of the pair $(\sigma, \bp)$ as defined at the end of \S \ref{TMC} (recall that $B$ is obtained by adding the vector $(p_i,\ldots,p_i) \in \RR^q$ to the $i$-th row of the transition matrix of $\sigma$). By \thmref{PF1}, the equation \eqref{AL1} has a unique solution $\bl$ with positive components. \vs

To construct the orbit $\OO_n=\{ x_1, \ldots, x_q \}$ for $0 \leq n < n_q$, take this unique solution $\bl = (\ell_1, \ldots, \ell_q)$ and define  
\begin{equation}\label{deforb1}
\begin{cases}
x_1 = & \dfrac{n}{k-1}+\dfrac{1}{k-1}  \ds{\sum_{j \in [1,\sigma(1))} \ell_j} \vs \\
x_i = & x_1 \ + \ \ds{\sum_{j \in [1,i)} \ell_j} \qquad \text{for} \ 2 \leq i \leq q.
\end{cases}
\end{equation}
Evidently $0< x_1 < x_2 < \cdots <x_q$. To verify that $0,x_1,\ldots,x_q$ are in positive cyclic order, we need to check $x_q<1$. As in the proof of \thmref{main1}, this follows once we show that $x_1<\ell_q$. We consider two cases: \vs

{\it Case 1.} $a_q=1$ so $p_q=n_q-1$. In this case, $[1,\sigma(1))$ is a proper subset of $[\sigma(q),\sigma(1))$ that does not contain $q$, so by \eqref{deforb1} and \eqref{foo1}, 
$$
(k-1) x_1 = n + \sum_{j \in [1,\sigma(1))} \ell_j < p_q + \ \big( \hspace{-6mm} \sum_{j \in [\sigma(q),\sigma(1))} \ell_j \big) - \ell_q = (k-1) \ell_q. 
$$

{\it Case 2.} $a_q=0$ so $p_q=n_q$. In this case, since 
$$
\sum_{j \in [1,\sigma(1))} \ell_j \leq \sum_{j \in [1,q)} \ell_j = 1-\ell_q,  
$$
we obtain
\begin{align*}
(k-1) x_1 & = n + \sum_{j \in [1,\sigma(1))} \ell_j \leq p_q -1 + \sum_{j \in [1,\sigma(1))} \ell_j \\
& < p_q + \ \big( \hspace{-6mm} \sum_{j \in [\sigma(q),\sigma(1))} \ell_j \big) - \ell_q = (k-1) \ell_q. 
\end{align*}
This shows $0,x_1,\ldots,x_q$ are in positive cyclic order. Since $n/(k-1)<x_1<(n+1)/(k-1)$ by \eqref{deforb1}, there are precisely $n$ fixed points of $\mk$ in $(0,x_1)$. \vs

To see that $(\mk,\OO_n)$ is a realization of $\sigma$, note that by \eqref{deforb1}  
$$
k x_1 = x_1 + n + \sum_{j \in [1,\sigma(1))} \ell_j = x_{\sigma(1)} \qquad (\operatorname{mod} \ \ZZ),
$$
and for $2 \leq i \leq q$,
\begin{align*}
k x_i & = k x_1 + \sum_{j \in [1,i)} k \ell_j \\
& =  x_1 + n + \! \! \sum_{j \in [1,\sigma(1))} \ell_j + \sum_{j \in [1,i)} \Big(p_j + \! \! \sum_{\alpha \in [\sigma(j),\sigma(j+1))} \ell_\alpha \Big) \qquad \text{by} \ \eqref{foo1} \\
& = x_1 + \! \! \sum_{j \in [1,\sigma(i))} \ell_j \qquad (\operatorname{mod} \ \ZZ) \\
& = x_{\sigma(i)}.
\end{align*}
Thus, $\mk(x_i) = x_{\sigma(i)}$ for every $i$. It follows from the relations \eqref{foo1} that $\fix(\OO_n)=\bn$. \vs

Finally, suppose $\OO' = \{ x'_1, \ldots, x'_q \}$ is any realization of $\sigma$ under $\mk$ with $\fix(\OO')=\bn$ and with $n$ fixed points in $(0,x'_1)$. Then the lengths $\ell'_i=x'_{i+1}-x'_i$ must be identical to the above $\ell_i$, as they are uniquely determined by \eqref{foo1}. Since  
$$
(k-1) x'_1 = x'_{\sigma(1)} -x'_1 = \sum_{j \in [1,\sigma(1))} \ell'_j = \sum_{j \in [1,\sigma(1))} \ell_j \qquad (\operatorname{mod} \ \ZZ), 
$$
$x'_1$ and $x_1$ (given by \eqref{deforb1}) must differ by an integer multiple of $1/(k-1)$. Since both $(0,x'_1)$ and $(0,x_1)$ contain $n$ fixed points of $\mk$, we conclude that $x'_1=x_1$.   
\end{proof}

\begin{theorem}\label{main4}
Let $\sigma \in \sC_q$ with $\des(\sigma)=d$ and $\sig(\sigma)=(a_1,\ldots,a_q)$ and let $k>d$. Then the number of realizations of $\sigma$ under $\mk$ is  
\begin{align*} 
& {q+k-d \choose q} & \hspace{-3cm} \text{if} \quad a_q=1, \vs \\
& {q+k-d-1 \choose q} & \hspace{-3cm} \text{if} \quad a_q=0. 
\end{align*}
\end{theorem}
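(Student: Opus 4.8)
\emph{Plan.} The idea is to deduce the count directly from \thmref{main3} by summing a simple arithmetic quantity over all admissible fixed point distributions. By \thmref{main3}, for each $\sigma$-admissible vector $\bn=(n_1,\ldots,n_q)$ in degree $k$ there are exactly $n_q$ realizations $(\mk,\OO)$ of $\sigma$ with $\fix(\OO)=\bn$, and these $n_q$ orbits are pairwise distinct (they are the rotated copies $\OO_0+n/(k-1)$, $0\le n<n_q$). Since realizations with different fixed point distributions are automatically distinct, and since every realization of $\sigma$ under $\mk$ has a $\sigma$-admissible distribution by \eqref{kal} and the discussion preceding \defref{adv}, the total number of realizations of $\sigma$ under $\mk$ is
\[
N \;=\; \sum_{\bn} n_q ,
\]
the sum running over all non-negative integer vectors $\bn$ with $\sum_i n_i=k-1$, $n_q\ge 1$ and $\bn\ge\sig(\sigma)$ componentwise. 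So everything reduces to evaluating this sum.

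The combinatorial input I would use is the following elementary fact: for integers $M\ge 0$ and $Q\ge 1$, the set $\mathcal P$ of non-negative integer vectors $(p_1,\ldots,p_Q)$ with $\sum_i p_i=M$ has $\binom{M+Q-1}{Q-1}$ elements, and $\sum_{\bp\in\mathcal P}p_Q=\binom{M+Q-1}{Q}$. The first is stars and bars. For the second, the map $(\bp,j)\mapsto(p_1,\ldots,p_{Q-1},\,j-1,\,p_Q-j)$ is a bijection between the set of pairs $(\bp,j)$ with $\bp\in\mathcal P$ and $1\le j\le p_Q$ (which has cardinality $\sum_{\bp}p_Q$) and the set of non-negative integer $(Q+1)$-tuples summing to $M-1$ (which has cardinality $\binom{M+Q-1}{Q}$).

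Now split on $a_q$, writing $\sig(\sigma)=(a_1,\ldots,a_q)$, which has $d-1$ entries equal to $1$. If $a_q=1$, set $p_i=n_i-a_i$; then $\bn$ is $\sigma$-admissible precisely when $\bp\ge 0$ and $\sum_i p_i=k-d$ (the condition $n_q\ge1$ being subsumed by $n_q=1+p_q\ge1$), and $n_q=p_q+1$, so by the lemma with $(M,Q)=(k-d,q)$ and Pascal's rule
\[
N=\sum_{\bp}(p_q+1)=\binom{q+k-d-1}{q}+\binom{q+k-d-1}{q-1}=\binom{q+k-d}{q}.
\]
If $a_q=0$, set $p_i=n_i-a_i$ for $i<q$ and $p_q=n_q-1$; then $\bn$ is $\sigma$-admissible precisely when $\bp\ge0$ and $\sum_i p_i=k-d-1$, and again $n_q=p_q+1$, so by the lemma with $(M,Q)=(k-d-1,q)$ and Pascal's rule
\[
N=\sum_{\bp}(p_q+1)=\binom{q+k-d-2}{q}+\binom{q+k-d-2}{q-1}=\binom{q+k-d-1}{q}.
\]
These are the two asserted formulas. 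In the rotation cycle case $d=1$ one has $a_q=0$, and the second formula recovers Goldberg's count $\binom{q+k-2}{q}$.

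There is no serious obstacle here once \thmref{main3} is available; the entire content is the reduction $N=\sum_\bn n_q$ together with the two stars-and-bars computations. The one point deserving care is the admissibility bookkeeping in the case distinction: when $a_q=1$ the inequality $n_q\ge1$ is already implied by $\bn\ge\sig(\sigma)$, whereas when $a_q=0$ it is a genuine extra constraint that must be absorbed by shifting the last coordinate, which is exactly what lowers the parameter to $M=k-d-1$ in the second case and produces the drop from $\binom{q+k-d}{q}$ to $\binom{q+k-d-1}{q}$.
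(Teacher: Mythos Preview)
Your argument is correct and follows essentially the same approach as the paper: both invoke \thmref{main3} to reduce the count to $N=\sum_{\bn}n_q$ over $\sigma$-admissible vectors, then evaluate this sum by a stars-and-bars argument. The only difference is presentational: the paper observes in one line that counting each $\bn$ with multiplicity $n_q$ is the same as distributing the ``free'' fixed points among the $q+1$ gaps cut by $0,x_1,\ldots,x_q$ (the interval $I_q$ being split in two by $0$), which immediately yields $\binom{q+k-d}{q}$ or $\binom{q+k-d-1}{q}$; you instead carry out the same count algebraically via your bijection lemma and Pascal's rule.
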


\begin{proof}
By \thmref{main3}, the number of realizations of $\sigma$ under $\mk$ is equal to the number of $\sigma$-admissible vectors $\bn$ in degree $k$, where $\bn=(n_1,\ldots,n_q)$ is counted with multiplicity $n_q$. This is the same as the number of ways to distribute the ``free'' fixed points in the $q+1$ intervals between $0,x_1,\ldots,x_q$. The number of ``free'' fixed points is $k-d$ if $a_q=1$ and $k-d-1$ if $a_q=0$. This leads to the counts $q+k-d \choose q$ and $q+k-d-1 \choose q$, respectively.    
\end{proof}

As a special case, we recover the following result of \cite{G} (see \cite{Z1} for an alternative proof):

\begin{corollary}\label{cased=1}
Every rotation cycle in $\sC_q$ has $q+k-2 \choose q$ realizations under $\mk$. 
\end{corollary}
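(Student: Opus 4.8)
The plan is to obtain this as an immediate specialization of \thmref{main4}. First I would note that a rotation cycle $\sigma \in \sC_q$ has $\des(\sigma) = 1$ by \exref{rc=1}; set $d = 1$. Since $d = 1$, the signature $\sig(\sigma) = (a_1,\ldots,a_q)$ is the zero vector (as recorded just after \lemref{des}), so in particular $a_q = 0$ and $\sigma$ has no marked intervals. Any integer $k \geq 2$ satisfies $k > d = 1$, so \thmref{main4} applies, and in the case $a_q = 0$ it gives precisely
$$
\binom{q+k-d-1}{q} = \binom{q+k-2}{q}
$$
realizations of $\sigma$ under $\mk$. Note that this count is independent of the particular rotation cycle chosen (i.e.\ of $p$), so all $\varphi(q)$ rotation cycles in $\sC_q$ have the same number of realizations under $\mk$.

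If one prefers a route that does not go through the general counting argument, the statement can be read off directly from \thmref{main3}: when $d = 1$ the $\sigma$-admissible vectors in degree $k$ (\defref{adv}) are exactly the non-negative integer vectors $\bn = (n_1,\ldots,n_q)$ with $\sum_i n_i = k-1$ and $n_q \geq 1$, and \thmref{main3} attaches to each such $\bn$ exactly $n_q$ realizations. Summing $n_q$ over all such $\bn$ is a standard stars-and-bars count: pinning one fixed point of $\mk$ in $I_q$ to account for the constraint $n_q \geq 1$, the remaining $k-2$ ``free'' fixed points may be distributed arbitrarily among the $q+1$ arcs cut out by the points $0, x_1, \ldots, x_q$ on the circle, which can be done in $\binom{(k-2)+q}{q} = \binom{q+k-2}{q}$ ways.

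There is no genuine obstacle here: the corollary is a direct specialization once the two facts $d = 1$ and $a_q = 0$ — both immediate from the earlier sections — are in hand. The only place that needs a moment's care is the bookkeeping that converts ``$\sigma$-admissible vectors weighted by $n_q$'' into a single binomial coefficient, and that bookkeeping is already carried out in the proof of \thmref{main4}.
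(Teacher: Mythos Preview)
Your proposal is correct and matches the paper's own proof exactly: both note that a rotation cycle has $\des(\sigma)=1$ and $\sig(\sigma)=(0,\ldots,0)$, so $a_q=0$, and then read off the count $\binom{q+k-2}{q}$ from the $a_q=0$ case of \thmref{main4}. Your added paragraph giving the direct stars-and-bars argument via \thmref{main3} is a harmless elaboration of the same reasoning already contained in the proof of \thmref{main4}.
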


\begin{proof}
A rotation cycle $\sigma$ has $\des(\sigma)=1$ and $\sig(\sigma)=(0,\ldots,0)$. 
\end{proof}

We can now pass from cycles to combinatorial types. The following is an analog of \thmref{main2}: 

\begin{theorem}\label{main5}
Every combinatorial type $\tau$ in $\sC_q$ with $\des(\tau)=d$ and $\sym(\tau)=s$ has 
$$
\frac{k-1}{s} \ {q+k-d-1 \choose q-1}
$$ 
realizations under $\mk$ if $k>d$. 
\end{theorem}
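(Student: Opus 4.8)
The plan is to mirror the proof of \thmref{main2}, reducing the count for $\tau$ to a sum of the cycle-by-cycle counts furnished by \thmref{main4}. Fix a representative cycle $\sigma$ for $\tau$ with $\sig(\sigma)=(a_1,\ldots,a_q)$, and recall from \S\ref{CTC} that $\tau=\{\rho^{-j}\sigma\rho^j:1\le j\le r\}$ with $r=q/s$, these $r$ cycles being pairwise distinct. Any realization $(\mk,\OO)$ of $\tau$ realizes, by the labeling convention, a uniquely determined cycle of $\sC_q$ (namely the $\sigma'$ with $\mk(x_i)=x_{\sigma'(i)}$), which must be one of these $r$ cycles; in particular the families of orbits realizing distinct cycles in $\tau$ are disjoint. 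Hence the number of realizations of $\tau$ under $\mk$ equals $\sum_{j=1}^{r}N_j$, where $N_j$ is the number of realizations of $\rho^{-j}\sigma\rho^j$ under $\mk$.

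Next I would compute each $N_j$. Since $\des(\rho^{-j}\sigma\rho^j)=d$ by \lemref{degp} and $k>d$, \thmref{main4} gives $N_j=\binom{q+k-d}{q}$ or $N_j=\binom{q+k-d-1}{q}$ according as the last entry of $\sig(\rho^{-j}\sigma\rho^j)$ is $1$ or $0$. Iterating \eqref{sigcon} yields $\sig(\rho^{-j}\sigma\rho^j)=(a_{j+1},\ldots,a_q,a_1,\ldots,a_j)$, whose last entry is $a_j$. By \lemref{des} exactly $d-1$ of $a_1,\ldots,a_q$ equal $1$, and the $r$-periodicity \eqref{r-per} splits $(a_1,\ldots,a_q)$ into $s$ identical blocks of length $r$, so exactly $(d-1)/s$ of $a_1,\ldots,a_r$ equal $1$ (for $d\ge 2$ this uses $s\mid d-1$, already noted after \thmref{main2}; for $d=1$ it is trivial). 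Summing,
$$
\sum_{j=1}^{r}N_j=\frac{d-1}{s}\binom{q+k-d}{q}+\frac{q-d+1}{s}\binom{q+k-d-1}{q}.
$$

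It then remains to check the binomial identity
$$
(d-1)\binom{q+k-d}{q}+(q-d+1)\binom{q+k-d-1}{q}=(k-1)\binom{q+k-d-1}{q-1},
$$
which multiplies the displayed sum by $s$ to the desired answer. I would prove it by applying Pascal's rule $\binom{q+k-d}{q}=\binom{q+k-d-1}{q}+\binom{q+k-d-1}{q-1}$ to collapse the two $\binom{q+k-d-1}{q}$ terms on the left into $q\binom{q+k-d-1}{q}$, and then using the absorption identity $q\binom{N}{q}=(N-q+1)\binom{N}{q-1}$ with $N=q+k-d-1$ to rewrite this as $(k-d)\binom{q+k-d-1}{q-1}$; the two surviving terms combine to $(k-1)\binom{q+k-d-1}{q-1}$. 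There is no real obstacle here: the content is the reduction to \thmref{main4} and the tracking of which conjugates have signature ending in $1$, after which everything is routine algebra; the one point to double-check is that the divisibility statements keep all the fractions integral, which follows from $s\mid q$ and $s\mid d-1$.
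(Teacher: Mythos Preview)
Your proposal is correct and follows essentially the same approach as the paper: sum the cycle-by-cycle counts from \thmref{main4} over the $r=q/s$ conjugates, using \eqref{sigcon} and the $r$-periodicity of $\sig(\sigma)$ to count how many conjugates have last signature entry $1$, then simplify the resulting sum. You in fact supply more detail than the paper does on two points the paper leaves implicit---the disjointness of realizations for distinct conjugates, and the ``brief computation'' reducing the binomial sum---both of which you handle correctly.
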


\begin{proof}
The proof is similar to \thmref{main2}. Pick a representative cycle $\sigma$ for $\tau$ and let $\sig(\sigma)=(a_1,\ldots,a_q)$. Recall that $\tau = \{ \rho^{-1} \sigma \rho, \ldots, \rho^{-r} \sigma \rho^{r}=\sigma \}$, where $r=q/s$, and 
$$
\sig(\rho^{-j} \sigma \rho^j) = (a_{j+1},\ldots,a_q,a_1,\ldots,a_j).
$$
By \thmref{main4}, the number of realizations of the conjugate $\rho^{-j} \sigma \rho^j$ under $\mk$ is ${q+k-d \choose q}$ if $a_j=1$ and is ${q+k-d-1 \choose q}$ if $a_j=0$. By $r$-periodicity \eqref{r-per}, the list $a_1, \ldots, a_r$ contains $(d-1)/s$ entries of $1$ and $(q-d+1)/s$ entries of $0$. Thus, the number of distinct realizations of $\tau$ under $\mk$ is
$$
\frac{d-1}{s} \ {q+k-d \choose q} + \frac{q-d+1}{s} \ {q+k-d-1 \choose q}.
$$
A brief computation reduces this count to $\frac{k-1}{s} \ {q+k-d-1 \choose q-1}$, as claimed.    
\end{proof}

The following three examples illustrate the main results of this section:

\begin{example}\label{sinx}
Consider the realizations under $\mathbf{m}_3$ of the $8$ combinatorial types in $\sC_5$ represented by the cycles in \exref{cyc5}. By \corref{cased=1} (or \thmref{main5} with $s=5$), the rotation cycles $\rho, \rho^2,\rho^3,\rho^4$ each have ${6 \choose 5}=6$ realizations. The combinatorial types represented by $\nu, \nu^{-1}$ have $\des=2, \sym=1$, so by \thmref{main5} they each have $2 \times {5 \choose 4}=10$ realizations. The combinatorial types represented by $\sigma, \sigma^{-1}$ have $\des=3, \sym=1$, so by \thmref{main2} they each have $2$ realizations. These add up to a total of $24+20+4=48$ realizations, which account for all period $5$ orbits under $\mathbf{m}_3$.     
\end{example}

\begin{example}\label{siny}
Consider the combinatorial type $\tau$ of \exref{sinu} represented by the cycle $\sigma=(1 \ 2 \ 4 \ 5 \ 3)$ with $\sig(\sigma)=(0,0,1,0,1)$, $\des(\sigma)=3$, and $\sym(\sigma)=1$. According to \thmref{main5} there are $3 \times {5 \choose 4}=15$ realizations of $\tau$ under $\mathbf{m}_4$. By \thmref{main4}, these consist of ${6 \choose 5}=6$ realizations of $\sigma$, ${6 \choose 5}=6$ realizations of $\rho^{-3}\sigma \rho^3$, and ${5 \choose 5}=1$ realization of each of the conjugates $\rho^{-1}\sigma \rho, \rho^{-2}\sigma \rho^2, \rho^{-4}\sigma \rho^4$. The $6$ realizations of $\sigma$ correspond to the following $\sigma$-admissible vectors in degree $4$: \vs

\begin{align*}
& \bn=(1,0,1,0,1) & & \OO_1= \frac{1}{1023} \ \big\{ 110, 440, 539, 737, 902 \big\} \\[2pt]
& \bn=(0,1,1,0,1) & & \OO_2= \frac{1}{1023} \ \big\{ 46, 184, 523, 736, 898 \big\} \\[2pt]
& \bn=(0,0,2,0,1) & & \OO_3= \frac{1}{1023} \ \big\{ 45, 180, 267, 720, 834 \big\} \\[2pt]
& \bn=(0,0,1,1,1) & & \OO_4= \frac{1}{1023} \ \big\{ 29, 116, 263, 464, 833 \big\} \\[2pt]
& \bn=(0,0,1,0,2) & & \OO_5= \frac{1}{1023} \ \big\{ 25, 100, 262, 400, 577 \big\} \\[2pt]
& & & \OO_6= \frac{1}{1023} \ \big\{ 366, 441, 603, 741, 918 \big\} 
\end{align*}

\noindent
Each orbit $\OO_i$ for $1 \leq i \leq 5$ is obtained by adding the vector $(1,1,1,1,1)$ to the $i$-th row of the transition matrix $A$ of $\sigma$ to form the matrix $B$, solving $B \bl = 4 \bl$ for the probability vector $\ell \in \RR^5$, and using the relations \eqref{deforb1} with $n=0$ to find the $x_i$. The orbit $\OO_6$ is obtained similar to $\OO_5$ but with the choice $n=1$ in \eqref{deforb1}, so $\OO_6=\OO_5+1/3$ (compare \figref{re4}). \vs

The rotations of these $6$ realizations under $x \mapsto x-1/3 \ (\operatorname{mod} \ \ZZ)$ will generate all $15$ realizations of $\tau$, as shown in the table below: \vs \vs

\bgroup
\def\arraystretch{1.2}
\begin{center} 
\begin{tabular}{|c|c|c|c|c|c|}
\hline
cycle & \multicolumn{5}{c|}{realizations} \\
\hline 
\multirow{2}{*}{$\sigma$} & \multirow{2}{*}{$\OO_1$} & \multirow{2}{*}{$\OO_2$} & \multirow{2}{*}{$\OO_3$} & \multirow{2}{*}{$\OO_4$} & $\OO_5$ \\
& & & & & $\OO_5-2/3$ \\
\hline
$\rho^{-1} \sigma \rho$ & $\OO_1-1/3$ & & & & \\
\hline
$\rho^{-2} \sigma \rho^2$ & & $\OO_2-1/3$ & & & \\
\hline
\multirow{2}{*}{$\rho^{-3} \sigma \rho^3$} & \multirow{2}{*}{$\OO_1-2/3$} & \multirow{2}{*}{$\OO_2-2/3$} & $\OO_3-1/3$ & \multirow{2}{*}{$\OO_4-1/3$} & \multirow{2}{*}{$\OO_5-1/3$} \\
& & & $\OO_3-2/3$ & & \\
\hline
$\rho^{-4} \sigma \rho^4$ & & & & $\OO_4-2/3$ & \\
\hline
\end{tabular}
\end{center}
\egroup
\end{example}

\vspace*{3mm}

%%%%%%%%%%%%%%%%%%%%%%%%%%%%%%%%%%%%%%%%%%%
\begin{figure}[t]
\begin{overpic}[width=\textwidth]{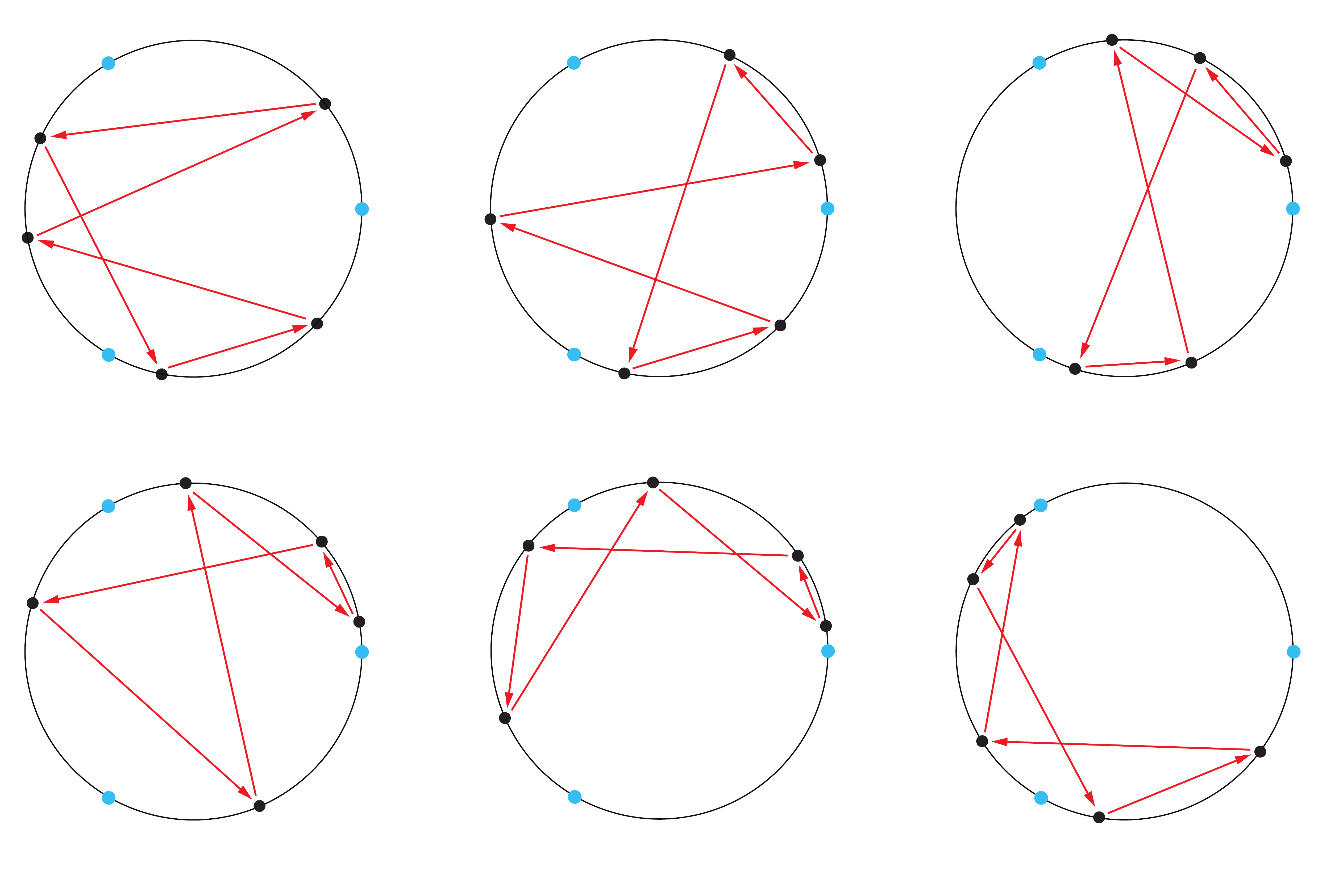}
\put (25.5,60) {\footnotesize $110$}
\put (-1,58) {\footnotesize $440$}
\put (-2,48) {\footnotesize $539$}
\put (10,37) {\footnotesize $737$}
\put (24,41) {\footnotesize $902$}
\put (63,56) {\footnotesize $46$}
\put (54,65) {\footnotesize $184$}
\put (33,50) {\footnotesize $523$}
\put (45,37) {\footnotesize $736$}
\put (59,40.5) {\footnotesize $898$}
\put (98,56) {\footnotesize $45$}
\put (90,64.5) {\footnotesize $180$}
\put (82,66) {\footnotesize $267$}
\put (79,37.3) {\footnotesize $720$}
\put (90,37.6) {\footnotesize $834$}
\put (28,21) {\footnotesize $29$}
\put (24,28.3) {\footnotesize $116$}
\put (13,32.5) {\footnotesize $263$}
\put (-1.5,22) {\footnotesize $464$}
\put (19,4) {\footnotesize $833$}
\put (63.5,20.5) {\footnotesize $25$}
\put (61,26) {\footnotesize $100$}
\put (48,32.5) {\footnotesize $262$}
\put (36.5,27) {\footnotesize $400$}
\put (34.5,11.5) {\footnotesize $577$}
\put (73.5,29.5) {\footnotesize $366$}
\put (70,24) {\footnotesize $441$}
\put (71,9.7) {\footnotesize $603$}
\put (82,3.3) {\footnotesize $741$}
\put (96,9) {\footnotesize $918$}
\put (13,51) {$\OO_1$}
\put (52,51) {$\OO_2$}
\put (81,51) {$\OO_3$}
\put (12,17) {$\OO_4$}
\put (49,17) {$\OO_5$}
\put (84,17) {$\OO_6$}
\end{overpic}
\caption{\sl The six realizations of the cycle $\sigma = (1 \ 2 \ 4 \ 5 \ 3)$ under $\mathbf{m}_4$ with $\des(\sigma)=3$ and $\sym(\sigma)=1$ (angles are shown in multiples of $1/1023$ and the blue dots are the fixed points $0,1/3,2/3$ of $\mathbf{m}_4$). These orbits and their rotated copies under $x \mapsto x-1/3 \ (\operatorname{mod} \ \ZZ)$ produce the $15$ realizations of the combinatorial type $[\sigma]$; see \exref{siny}.}  
\label{re4}
\end{figure}
%%%%%%%%%%%%%%%%%%%%%%%%%%%%%%%%%%%%%%%%%%%

\begin{example}\label{sinz}
Finally, consider the combinatorial type $\tau$ of \exref{sinv} represented by the cycle $\sigma=(1 \ 2 \ 5 \ 6 \ 3 \ 4)$ with $\sig(\sigma)=(0,1,0,1,0,1)$, $\des(\sigma)=4$ and $\sym(\sigma)=3$. According to \thmref{main5} there are $(4/3) \times {6 \choose 5}=8$ realizations of $\tau$ under $\mathbf{m}_5$. By \thmref{main4}, these consist of ${7 \choose 6}=7$ realizations of $\sigma=\rho^{-2}\sigma \rho^2=\rho^{-4}\sigma \rho^4$ and only ${6 \choose 6}=1$ realization of $\nu=\rho^{-1}\sigma \rho= \rho^{-3}\sigma \rho^3=\rho^{-5}\sigma \rho^5$. The $7$ realizations of $\sigma$ correspond to the following $\sigma$-admissible vectors in degree $5$: \vs

\begin{align*}
& \bn=(1,1,0,1,0,1) & & \OO_1= \frac{1}{15624} \ \big\{ 1113, 5565, 8169, 9597, 12201, 14133 \big\} \\[2pt]
& \bn=(0,2,0,1,0,1) & & \OO_2= \frac{1}{15624} \ \big\{ 488, 2440, 8144, 9472, 12200, 14128 \big\} \\[2pt]
& \bn=(0,1,1,1,0,1) & & \OO_3= \frac{1}{15624} \ \big\{ 483, 2415, 5019, 9471, 12075, 13503 \big\} \\[2pt]
& \bn=(0,1,0,2,0,1) & & \OO_4= \frac{1}{15624} \ \big\{ 482, 2410, 4394, 6346, 12050, 13378 \big\} \\[2pt]
& \bn=(0,1,0,1,1,1) & & \OO_5= \frac{1}{15624} \ \big\{ 357, 1785, 4389, 6321, 8925, 13377 \big\} \\[2pt]
& \bn=(0,1,0,1,0,2) & & \OO_6= \frac{1}{15624} \ \big\{ 332, 1660, 4388, 6316, 8300, 10252 \big\} \\[2pt]
& & & \OO_7= \frac{1}{15624} \ \big\{ 4238, 5566, 8294, 10222, 12206, 14158 \big\} 
\end{align*}

\noindent
Each orbit $\OO_i$ for $1 \leq i \leq 6$ is obtained by adding the vector $(1,1,1,1,1,1)$ to the $i$-th row of the transition matrix $A$ of $\sigma$ to form the matrix $B$, solving $B \bl = 5 \bl$ for the probability vector $\ell \in \RR^6$, and using the relations \eqref{deforb1} with $n=0$ to find the $x_i$. The orbit $\OO_7$ is obtained similar to $\OO_6$ but with the choice $n=1$ in \eqref{deforb1}, so $\OO_7=\OO_6+1/4$. \vs

The rotations of these $7$ realizations under $x \mapsto x-1/4 \ (\operatorname{mod} \ \ZZ)$ will generate the $8$ realizations of $\tau$. In fact, it is easy to see that under this rotation,
\begin{center}
\begin{tikzpicture}[node distance=13mm]
\node (1) {$\OO_1$};
\node (2) [right of =1] {$\OO_8$};
\node (3) [right of =2] {$\OO_5$};
\node (4) [right of =3] {$\OO_3$};
\node (5) [right of =4] {and};
\node (6) [right of =5] {$\OO_2$};
\node (7) [right of =6] {$\OO_6$};
\node (8) [right of =7] {$\OO_7$};
\node (9) [right of =8] {$\OO_4$};
\draw[->,thick,black] (1) to node {} (2);
\draw[->,thick,black] (2) to node {} (3);
\draw[->,thick,black] (3) to node {} (4);
\path[->,thick,black] (4) edge [bend left=] node[above] {} (1);
\draw[->,thick,black] (6) to node {} (7);
\draw[->,thick,black] (7) to node {} (8);
\draw[->,thick,black] (8) to node {} (9);
\path[->,thick,black] (9) edge [bend left] node[above] {} (6);
\end{tikzpicture}
\end{center}
Here $\OO_8=\OO_1-1/4$ is the unique realization of the conjugate cycle $\nu$.
\end{example}

The realizations of a cycle under $\mk$ can be parametrized by another invariant which is a dual to our fixed point distribution. By definition, the {\bit (cumulative) deployment vector} of a period $q$ orbit $\OO$ of $\mk$ is the integer vector 
$$
\dep(\OO)=(w_1,\ldots,w_{k-1}), \quad \text{where} \quad 
w_i = \# \Big( \OO \cap \Big( 0, \dfrac{i}{k-1} \Big) \Big). 
$$ 
Thus, $\dep(\OO)$ tells us how the points of $\OO$ are deployed in between the $k-1$ fixed points of $\mk$ (see \cite{G} and \cite{Z1}). Evidently, 
$$
0 \leq w_1 \leq \cdots \leq w_{k-1}=q.
$$
The vectors $\dep(\OO)$ and $\fix(\OO)$ are related as follows. Suppose $\fix(\OO)=(n_1,\ldots,n_q)$ and $0 \leq n_0 < n_q$ is the number of fixed points of $\mk$ between $0$ and the first point $x_1$ of $\OO$. Denote by $0 \leq j_1<j_2<\cdots<j_e=q$ the indices $j$ for which $n_j>0$. Then,   
\begin{equation}\label{dep-con}
\dep(\OO)=( \, \underbrace{j_1,\ldots,j_1}_{n_{j_1} \ \operatorname{times}}, \, \underbrace{j_2,\ldots,j_2}_{n_{j_2} \ \operatorname{times}}, \, \ldots, \, \underbrace{j_e,\ldots,j_e}_{n_{j_e}-n_0 \ \operatorname{times}} \! \!).
\end{equation}
If $(\mk,\OO)$ is a realization of $\sigma \in \sC_q$ with $\des(\sigma)=d$ 
and $\sig(\sigma)=(a_1,\ldots,a_q)$, then the indices $i_1<\cdots<i_{d-1}$ for which $a_i=1$ appear among the above indices $j_1, \ldots, j_e$. In particular, if $(\md,\OO)$ is the minimal realization of $\sigma$, then $n_0=0$, $n_i=a_i$ for all $1 \leq i \leq q$, $e=d-1$, and $i_\alpha=j_\alpha$ for all $1 \leq \alpha \leq d-1$. Thus, 
$$
\dep(\OO)=(i_1,\ldots, i_{d-1}) \qquad \text{in the minimal case}.
$$
Based on these observations, we see that the deployment vectors $(w_1,\ldots,w_{k-1})$ for realizations of $\sigma$ under $\mk$ satisfy the following two conditions: \vs
\begin{enumerate}
\item[(i)]
$0 \leq w_1 \leq \cdots \leq w_{k-1}=q$, \vs
\item[(ii)]
$i_1, \ldots, i_{d-1}$ appear among the components $w_1, \ldots, w_{k-1}$. \vs
\end{enumerate} 
Note that (ii) holds vacuously when $d=1$. Let us call any integer vector $\bw=(w_1, \ldots, w_{k-1})$ which satisfies these conditions $\sigma$-{\bit admissible} (this is the dual of the notion of $\sigma$-admissible in degree $k$ in \defref{adv} for the fixed point distribution). \vs     

We have thus arrived at the following alternative formulation of Theorems \ref{main1} and \ref{main3}, which we called the ``Realization Theorem'' in \S \ref{intro}: 

\begin{theorem}\label{main3alt}
Let $\sigma \in \sC_q$ with $\des(\sigma)=d$ and $k \geq \max \{ d, 2 \}$. Then, for every $\sigma$-admissible vector $\bw \in \ZZ^{k-1}$ there is a unique realization $(\mk,\OO)$ of $\sigma$ with $\dep(\OO)=\bw$.   
\end{theorem}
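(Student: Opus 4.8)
The plan is to derive this from Theorems~\ref{main1} and~\ref{main3} by making precise the duality between the deployment vector and the fixed point distribution that is sketched around equation~\eqref{dep-con}. Fix a realization $(\mk,\OO)$ of $\sigma$ with $\OO=\{x_1,\ldots,x_q\}$, write $\fix(\OO)=(n_1,\ldots,n_q)$, and let $n_0$ denote the number of fixed points of $\mk$ in $(0,x_1)$, so $0 \le n_0 < n_q$. Setting $c_m=n_0+n_1+\cdots+n_{m-1}$ for $1 \le m \le q$, one checks that $c_m$ is the number of fixed points of $\mk$ in $(0,x_m)$ and that $x_m$ lies in $\big(0,i/(k-1)\big)$ if and only if $c_m \le i-1$; consequently
$$
w_i = \#\{\, m : c_m \le i-1 \,\}, \qquad c_m = \min\{\, i : w_i \ge m \,\}-1,
$$
so that $(w_1,\ldots,w_{k-1})$ and $(c_1,\ldots,c_q)$ are conjugate staircases in the usual Young-diagram sense (this is exactly~\eqref{dep-con} reorganised). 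I would use these formulas to verify that $\bw \mapsto \big((n_1,\ldots,n_q),n_0\big)$, with $n_m = c_{m+1}-c_m$ for $1 \le m \le q-1$ and $n_q = k-1-c_q+n_0$, is a bijection between the $\sigma$-admissible vectors $\bw \in \ZZ^{k-1}$ and the pairs $(\bn,n_0)$ with $0 \le n_0 < n_q$ in which $\bn$ is $\sigma$-admissible in degree $k$ when $k>d$; in the minimal case $k=d$ this reduces to the statement that a $\sigma$-admissible $\bw$ exists precisely when $a_q=1$, that it is then unique (necessarily $\bw=(i_1,\ldots,i_{d-1})$), and that it corresponds to $\bn=\sig(\sigma)$, $n_0=0$. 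Here $w_{k-1}=q$ forces $c_q \le k-2$, which yields $\sum n_i = k-1$, $n_q \ge 1$ and $n_0<n_q$; and the one substantive point is that condition~(ii) for $\bw$ is equivalent to $\bn \ge \sig(\sigma)$, since for $a_{i_\alpha}=1$ with $i_\alpha<q$ the index $i_\alpha$ occurs among the $w_i$ exactly when $c_{i_\alpha}<c_{i_\alpha+1}$, i.e.\ exactly when $n_{i_\alpha}\ge 1$, while the case $i_\alpha=q$ is covered by $w_{k-1}=q$ together with $n_q \ge 1$.

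Granting this bijection, the theorem follows at once. For existence, given a $\sigma$-admissible $\bw$ let $(\bn,n_0)$ be its image. If $k>d$, \thmref{main3} provides a realization $(\mk,\OO)$ with $\fix(\OO)=\bn$ and exactly $n_0$ fixed points in $(0,x_1)$; if $k=d$ then $a_q=1$ necessarily and \thmref{main1} provides the (unique) realization of $\sigma$ under $\md$. In either case~\eqref{dep-con} gives $\dep(\OO)=\bw$. For uniqueness, suppose $(\mk,\OO)$ and $(\mk,\OO')$ both realize $\sigma$ with $\dep(\OO)=\dep(\OO')=\bw$. By the discussion preceding the statement $\bw$ is $\sigma$-admissible, and reading the bijection backwards, $\fix(\OO)=\fix(\OO')=\bn$ while $\OO$ and $\OO'$ have the same number $n_0$ of fixed points of $\mk$ between $0$ and their first point; the uniqueness clause of \thmref{main3} (respectively \thmref{main1}) then forces $\OO=\OO'$.

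The substance of the argument is entirely in the first paragraph — checking that $\bw\mapsto(\bn,n_0)$ is a well-defined bijection — and I expect the only mildly delicate bookkeeping to be the behaviour at the wrap-around index $q$, where the three kinds of fixed points inside the single interval $I_q$ (those in $(x_q,1)$, the fixed point $0$ itself, and those in $(0,x_1)$) must be separated correctly, together with reconciling the minimal case $k=d$ — in which $\bn$ is forced to equal $\sig(\sigma)$ and $a_q=1$ is necessary — with the general case $k>d$. No new dynamical input beyond Theorems~\ref{main1} and~\ref{main3} is needed; the statement is simply a change of parametrisation on the set of realizations of $\sigma$ under $\mk$.
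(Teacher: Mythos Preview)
Your proposal is correct and follows the same route as the paper: the paper does not give a separate proof of \thmref{main3alt} but simply presents it as ``an alternative formulation of Theorems~\ref{main1} and~\ref{main3}'' after the discussion around~\eqref{dep-con}, and what you have written is an explicit spelling-out of that reformulation via the bijection between $\sigma$-admissible $\bw$'s and pairs $(\bn,n_0)$. Your bookkeeping on the wrap-around and the minimal case $k=d$ is exactly the content the paper leaves implicit.
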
 

\begin{example}
The realizations $\OO_1,\ldots, \OO_6$ of $\sigma=(1 \ 2 \ 4 \ 5 \ 3)$ under $\mathbf{m}_4$ in \exref{siny} were parametrized by their fixed point distribution, but we can alternatively parametrize them by their deployment vector. Since $\sig(\sigma)=(0,0,1,0,1)$, the allowed deployment vectors $\bw=(w_1,w_2,w_3)$ satisfy $0 \leq w_1 \leq w_2 \leq w_3=5$ and contain $i_1=3,i_2=5$ among their components. Here are all the possibilities: \vs

\bgroup
\def\arraystretch{1.4}
\begin{center} 
\begin{tabular}{|c|c|c|}
\hline
$\bw$ & $\bn$ & realization \\
\hline 
$(1,3,5)$ & $(1,0,1,0,1)$ & $\OO_1$ \\
\hline
$(2,3,5)$ & $(0,1,1,0,1)$ & $\OO_2$ \\
\hline
$(3,3,5)$ & $(0,0,2,0,1)$ & $\OO_3$ \\
\hline
$(3,4,5)$ & $(0,0,1,1,1)$ & $\OO_4$ \\
\hline
$(3,5,5)$ & $(0,0,1,0,2)$ & $\OO_5$ \\
\hline
$(0,3,5)$ & $(0,0,1,0,2)$ & $\OO_6$ \\
\hline
\end{tabular}
\end{center}
\egroup  
\end{example}

\end{document}